\newtheorem{theo}{Theorem}
\newtheorem{lema}[theo]{Lemma}
\newtheorem{prop}[theo]{Proposition}
\newtheorem{definition}[theo]{Definition}
\newtheorem{remark}[theo]{Remark}
\newtheorem{notation}[theo]{Notation}
\newtheorem{example}[theo]{Example}
\newcommand{\CC}{{\mathbb{C}}}
\newcommand{\NN}{{\mathbb{N}}}
\newcommand{\PP}{{\mathbb{P}}}
\newcommand{\RR}{{\mathbb{R}}}
\newcommand{\SSS}{{\mathbb{S}}}
\newcommand{\ZZ}{{\mathbb{Z}}}
\newcommand{\calA}{{\mathcal{A}}}
\newcommand{\calB}{{\mathcal{B}}}
\newcommand{\calC}{{\mathcal{C}}}
\newcommand{\calD}{{\mathcal{D}}}
\newcommand{\calF}{{\mathcal{F}}}
\newcommand{\calM}{{\mathcal{M}}}
\newcommand{\calO}{{\mathcal{O}}}
\newcommand{\calS}{{\mathcal{S}}}
\newcommand{\calU}{{\mathcal{U}}}
\newcommand{\calV}{{\mathcal{V}}}
\newcommand{\calX}{{\mathcal{X}}}
\newcommand{\calY}{{\mathcal{Y}}}
\newcommand{\calZ}{{\mathcal{Z}}}
\newcommand{\comp}{{\circ}}
\newcommand{\fraX}{{\mathcal{X}}}
\begin{document}
\title[Bouquet theorem]{Topology of hypersurface singularities with 3-dimensional critical set}
\author{Javier Fern\'andez de Bobadilla}
\author{Miguel Marco-Buzun\'ariz}
\address{ICMAT. CSIC-Complutense-Aut\'onoma-Carlos III}
\email{javier@mat.csic.es}
\email{mmarco@unizar.es}
\thanks{Research partially supported by the ERC Starting Grant project TGASS and by Spanish Contract MTM2007-67908-C02-02. The authors thank to the Facultad de Ciencias Matem\'aticas of the Universidad Complutense de Madrid for excellent working conditions.}
\date{28-1-2010}
\subjclass[2000]{14J17,32S25}



\maketitle

\section{Introduction}

In~\cite{Mi} Milnor introduced the Milnor fibration for any holomorphic germ 
\[f:(\CC^n,O)\to\CC\]
 and proved that the Milnor fibre
is always a CW-complex of dimension at most $(n-1)$. In the case in which $f$ has an isolated singularity at the origin he also proved
that the Milnor fibre is homotopy equivalent to a bouquet of $(n-1)$-spheres. The number of spheres is equal to the Milnor number $\mu$,
which can be easily computed from the equation. If $f$ has non-isolated singularities at the origin the situation
is much more complicated. Up to now, the only general result is Kato and Matsumoto bound~\cite{KM} which asserts that the 
Milnor fibre is $(s-2)$-connected, where $s$ is the codimension of the singular locus in $\CC^n$. 
The homotopy type of the Milnor fibre of a general function germ can be very complicated.
In fact, by a recent result of the first author~\cite{FdB4}
for any local analytic set in $\CC^m$ there is a function whose Milnor fibre is 
homotopy equivalent to the complement of the set in a sufficiently small ball. The class of such spaces is very rich (contains for
example the class of complements of hyperplane and line arrangements) and there is a whole theory dedicated to its study.
Hence we may not expect to find a simple
description of the homotopy type of the Milnor fibre of a general function germ. 

It is very interesting to find classes of non-isolated hypersurface singularities for which the homotopy type of the Milnor fibre 
admits an understandable description from the equation. This paper contributes to a program in this direction.
Let $I\subset\calO_{\CC^n,O}$ be an ideal defining a $3$-dimensional i.c.i.s. $\Sigma_0$
and let $f$ be a function of finite extended codimension 
with respect to $I$ (see Section~\ref{sectionunfoldings} for a definition). Our main 
results are the following:
\begin{enumerate}
\item We prove that the Milnor fibre of $f$ is homotopic to a bouquet of spheres of different dimensions
(see Theorem~\ref{homotopiabouquet}).
\item We also compute the number of spheres appearing
in terms of the equation (see Theorem~\ref{homologia}).
\end{enumerate}
Simmilar results for the cases in which $\Sigma_0$ is of dimension $1$ and $2$ were produced by the work
of Siersma~(see \cite{Si1} and~\cite{Si2}), Zaharia~\cite{Za} and Nemethi~\cite{Nm}. 
If $\Sigma$ is a hypersurface the result was proved by Shubladze~\cite{Sh} and Nemethi~\cite{Ne}.

Actually we formulate the following:\\

\noindent{\textbf{Conjecture.} {\it The Milnor fibre of a function of finite extended codimension with respect to an i.c.i.s. has 
the homotopy type of a bouquet of spheres.}\\

Functions of finite extended codimension with respect to an i.c.i.s. are a particular case of $I$-isolated singularities as defined
and studied in~\cite{FdB2}. There it was given a bouquet theorem decomposing homotopically the Milnor fibre in a bouquet of
several $(n-1)$-spheres and an unknown space (Theorem 9.3~of~\cite{FdB2}). 
The results of this paper identify the homotopy type of that space. 
It would be interesting to generalise this paper to other $I$-isolated singularities.

Other bouquet theorems in the context of singular ambient spaces were proved by Siersma~\cite{Si4} and Tibar~\cite{Ti}. 

Let us end with a description of some aplications of this kind of results. 
The class of singularities studied in this paper shows very surprising phenomena from the equisingularity viewpoint.
It has been used in~\cite{FdB2} in order to disprove several old equisingularity questions. At the 
moment of writing the paper~\cite{FdB2} some of the Betti number formula contained in this paper were known to
the first author. It was this knowledge which lead him to guess the counterexamples contained in~\cite{FdB2}
(see Section~\ref{sectionexamples} for more detais).
We hope that a systematic solution to our conjecture would lead to interesting examples showing other topological phenomena 
in non-isolated singularities as yet unknown to us.

The structure of this paper is inspired in the classical Picard-Lefschetz theory of isolated
singularities and Sierma's generalisation for non-isolated singularities. In this theory, a function is perturbed to split a singular point into
several Morse-type singularities (this process is usually refered to as Morsification). Then it is shown that the homology of the Milnor fibre of the original function can be recovered
from the Milnor fibres of each Morse-type singularity. Finally, these homologies are computed
by a local study of the Morse-type singularities. In Section~\ref{sectionunfoldings} we use the results of~\cite{FdB1}
to prove that in our case we can do a process analogous to the Morsification, but instead of obtaining
only Morse-type singularities, we will also obtain a non-isolated singularity over the Milnor
fibre of the i.c.i.s. $\Sigma$. In Section~\ref{sechomsplit} we show that, as in the isolated case, the homology of the original Milnor fibre can be recovered from the pieces of the Milnor 
fibres contained in small neighbourhoods of the singularities obtained after the deformation. Having done that, the hardest part 
of the argument is to study the Milnor fibre around the non-isolated singularity obtained
after the deformation. This study is done by taking a suitable decomposition of the Milnor fibre of $\Sigma$ in such a way that the space we want to study fibres naturally over each
stratum of this decomposition. Section~\ref{sectiondecomp} describes this decomposition, and the following ones study the parts that fibre over the different strata.
Sections~\ref{seccalx} and \ref{seccalm} show how to glue these pieces to obtain the homology of the Milnor fibre around the deformation of $\Sigma$. Finally, we
use all these data to recover the homotopy type of the whole Milnor fibre in Sections~\ref{sechomolfibramilnor} and \ref{sechomotfibramilnor}. 
The last section describes a distinguished family of functions belonging to the class studied in this paper which already had striking aplications in topological equisingularity.
\subsection{Terminology} 
If $X$ is a subspace of a topological space $Y$ we denote by $\dot{X}$ the interior points of $X$, and by $\partial X$ the boundary
points of $X$ in $Y$. 
Given two topological spaces $X$ any $Y$ we denote that they have the same homotopy type by $X\simeq Y$.
We will denote by $D_\delta$ the closed disc of radius $\delta$ in the complex plane and by $B_\epsilon$ the closed ball of radius
$\epsilon$ in a complex affine space. The centers of the discs and balls will be clear from the context unless they are explicitly 
mentioned in the text or in the notation (by $B(x,\epsilon)$). Denote by $\SSS^k$ an sphere of dimension $k$.

\section{Unfoldings}
\label{sectionunfoldings}

Let $I:=(g_1,...,g_{n-3})$ define a $3$-dimensional i.c.i.s. $\Sigma_0$ in $\CC^n$. Denote by $\Theta_{I,e}$ the germs of vector fields tangent to the i.c.i.s. A function
$f:\CC^n\to\CC$ is singular at $\Sigma$ if and only if it belongs to $I^2$. As in~\cite{Pe} we define the {\em extended codimension of $f$ with respect to $I$} as
\[c_{I,e}:=dim_\CC (I^2/\Theta_{I,e}(f)).\]
From the deformation viewpoint, functions with finite extended codimension play the same role in the space of functions singular in $\Sigma_0$ than isolated singularities in
the space of all funtion-germs. A geometric characterisation of germs of finite extended codimension was given in~\cite{Za} (see~\cite{FdB2} for another proof and
generalisations): these are germs in $I^2$ which outside the origin only have either isolated $A_1$ singularities or singularities of type $D(3,p)$, with $p\in\{0,1,2\}$.

The singularity $D(k,p)$ has the following normal form (see~\cite{Pe}):
\[\sum_{1\leq i\leq j\leq p}x_{i,j}y_iy_j+\sum_{p+1\leq i\leq n-k}y_i^2=1,\]
where $\{x_{i,j}\}_{1\leq i\leq j\leq p}\cup\{y_i\}_{1\leq i\leq n-k}$ is an independent system of linear forms in $\CC^n$.

Given a germ $f\in I^2$ we can express it as a matrix product
\[f=(g_1,...,g_{n-3})(h_{i,j})(g_1,...,g_{n-3})^t\]
with $(h_{i,j})$ a symmetric matrix of holomorphic germs of size $n-3$. An easy computation shows that the restriction $(h_{i,j})|_{\Sigma_0}$ only depends on $f$. 

Let 
\[G_1,...,G_{n-3}:\CC^n\times B\to\CC^{n-3}\]
be the semiuniversal unfolding of the i.c.i.s. $(g_1,...,g_{n-3})$. Its base $B$ is a germ of complex manifold~\cite{Lo}.
Given any $b\in B$ denote by 
\[(G_{1,b},...,G_{n-3,b}):\CC^n\to\CC^{n-3}\]
the mapping corresponding to the parameter value $b$.
In the space $SM(n-3)$ of symmetric matrices with complex entries we consider the stratification
\[SM(n-3)=\bigcup_{i=0}^{n-3}SM(n-3,i),\]
where $SM(n-3,i)$ is the set of matrices of corank equal to $i$. Notice that $\overline{SM(n-3,i)}$ consists of the set of matrices defined by the vanishing of the minors
of size $n-2-i$. It is easy to check that $SM(n-3,i)$ is of codimension $i(i+1)/2$ in $SM(n-3)$. We consider the unfolding 
\[F:\CC^n\times B\times SM(n-3)\to\CC\]
of the function $f$ defined by:
\begin{equation}
\label{unfolding1}
F(x_1,...,x_n,b,(c_{i,j})):=(G_{1,b},...,G_{n-3,b})(h_{i,j}+c_{i,j})(G_{1,b},...,G_{n-3,b})^t.
\end{equation}

\begin{notation}
\label{notaciones}
Denote by $S=B\times SM(n-3)$ the base of the unfolding. Consider $\Sigma:=V(G_1,...,G_{n-3})\subset \CC^n\times S$. 
Given any $s=(b,(c_{i,j}))\in S$ we denote by $f_s:\CC^n\to\CC$ the function corresponding to the parameter value $s$,
by $\Sigma_s$ the locus $V(G_{1,b},...,G_{n-3,b})$ and by 
\[H(F):\Sigma\to SM(n-3)\]
the mapping defined by $H(F)(x,b,(c_{i,j})):=(h_{i.j}(x)+c_{i,j})$. Consider
\[H(f_s):=H(F)|_{\Sigma_s}.\]
Define $\Sigma[i]:=H(F)^{-1}(SM(n-3,i))$ and notice that we have the obvious equality $\Sigma[i]_s=H(f_s)^{-1}(SM(n-3,i))$.

Figure~\ref{figura1} shows a schematic view of these sets.
\end{notation}

\begin{figure}[h]
\setlength{\unitlength}{0.000437445in}
\begin{picture}(8061,4753)(0,-10)
\put(6078,4522){$\Sigma_s$}
\put(723,4567){$\Sigma_0$}
\put(1803,1057){$\overline{\Sigma[1]_0}$}
\put(7293,832){$\overline{\Sigma[1]_s}$}
\includegraphics[scale=0.5]{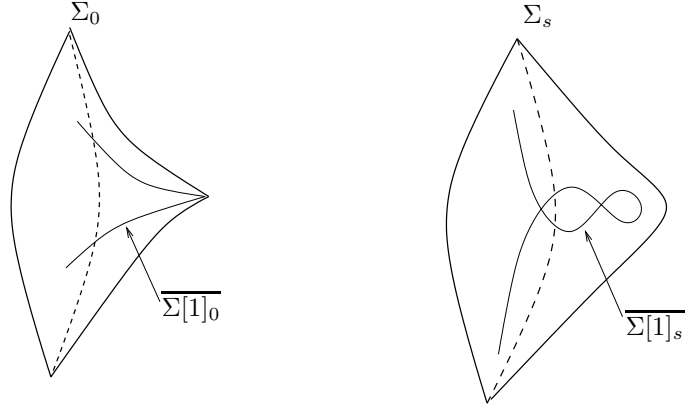}
\end{picture}
\caption{\label{figura1}The deformation of the i.c.i.s. and the stratification}
\end{figure}

The function $F_0$ coincides with $f$, where $0\in S$ is the origin of the base of the unfolding. 

Let $\epsilon$ and $\delta$ be radii for a Milnor fibration of $f$, that is radii such that
\begin{enumerate}
\item the central fibre $f^{-1}(0)$ meets $\partial B_{\epsilon'}$ transversely in the stratified sense for any $\epsilon'\leq \epsilon$,
\item for any $t\in D_{\delta}\setminus\{0\}$, the fibre $f^{-1}(t)$ meets $\partial B_\epsilon$ transversely,
\item the only critical value of $f|_{B_\epsilon}$ is $0$.
\end{enumerate}

From~\cite{FdB1} and~\cite{FdB2} we obtain:
\begin{theo}
\label{teounfolding1}
There exists a proper closed analytic subset $\Delta$ of $S$, and a ball $B_\eta$ centred at $0\in S$ such that for any $s\in B_\eta\cap (D\setminus\Delta)$ we have
\begin{enumerate}
\item for any $t\in D_\delta$ the intersection of $f_s^{-1}(t)$ with $\partial B_\epsilon$ is transversal (in the stratified sense if $t=0$).
\item the critical set of the function $f_s|_{B_\epsilon}$ is the union of $\Sigma_s\cap B_\epsilon$ with a finite number of Morse type singularities, whose critical values are 
pairwise different and different from $0$.
\item the set $\Sigma_s\cap B_\epsilon$ is smooth (a Milnor fibre of the i.c.i.s. $(\Sigma_0,O)$) and the mapping 
\begin{equation}
\label{hessiana}
H(f_s)|_{\Sigma_s\cap B_\epsilon}:\Sigma_s\cap B_\epsilon\to SM(n-3)
\end{equation}
is transversal to the stratification of $SM(n-3)$ by corank. In particular $\Sigma[i]_s$ is a manifold of codimension $i(i+1)/2$ in the $3$-dimensional 
manifold $\Sigma_s\cap B_\epsilon$. Therefore the critical points of $f_s$ in $\Sigma_s$ are of type $D(3,0)$, $D(3,1)$ or $D(3,2)$.
\end{enumerate}
Denote by $\calC$ and $\calD$ the critical set and the discriminant of the mapping
\begin{equation}
\label{fibraciongrande}
(F,pr_2):\CC^n\times B_\eta:\to\CC\times B_\eta,
\end{equation}
where $pr_2$ denotes the projection of $\CC^n\times B_\eta$ to the second factor. Then the restriction
\[(F,pr_2):(B_\epsilon\times B_\eta)\cap (F,pr_2)^{-1}((D_\delta\times B_\eta)\setminus\calD)\to (D_\delta\times B_\eta)\setminus\calD\]
is a locally trivial fibration with fibre diffeomorphic to the Milnor fibre of $f$.
\end{theo}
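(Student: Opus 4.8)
The plan is to assemble $\Delta$ as a finite union of proper closed analytic subsets of $S$, one for each of the phenomena in the statement, then to fix $\epsilon,\delta$ as radii of a Milnor fibration for $f$ and use openness of the boundary--transversality conditions to locate $B_\eta$; the final assertion is then a formal consequence of the Ehresmann fibration theorem. I will repeatedly use that the complement of a proper closed analytic subset of a connected complex manifold is dense and connected. For the i.c.i.s.\ part: since $(G_1,\dots,G_{n-3})$ is semiuniversal, its discriminant $\Delta_B\subset B$ is a proper closed analytic subset, the total space $\Sigma$ is smooth over $B\setminus\Delta_B$, and for $b\notin\Delta_B$ the slice $\Sigma_s\cap B_\epsilon$ is a smooth $3$-manifold diffeomorphic to a Milnor fibre of $(\Sigma_0,O)$ (shrinking $\epsilon$, or later $\eta$, so that $\Sigma_s\trans\partial B_\epsilon$, which is open at $s=0$). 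This gives the first assertion of item (3).

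For the Hessian transversality, restrict $H(F)$ to $\Sigma\cap\big(B_\epsilon\times(B\setminus\Delta_B)\times SM(n-3)\big)$. Since the entries of $H(F)$ are $h_{i,j}+c_{i,j}$ and $\Sigma=V(G_1,\dots,G_{n-3})$ does not constrain the parameters $(c_{i,j})$, the differential of $H(F)$ is the identity along the $(c_{i,j})$--directions, which are tangent to $\Sigma$; hence $H(F)$ is submersive over $B\setminus\Delta_B$, in particular transverse to each corank stratum $SM(n-3,i)$ (a locally closed analytic subset cut out by minors). Thom's parametric transversality theorem then yields a proper closed analytic subset $\Delta_1\subset S$ containing $\Delta_B\times SM(n-3)$ such that for $s\notin\Delta_1$ the map $H(f_s)|_{\Sigma_s\cap B_\epsilon}$ is transverse to every $SM(n-3,i)$. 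Transversality together with $\codim SM(n-3,i)=i(i+1)/2$ forces $\Sigma[i]_s$ to have codimension $i(i+1)/2$ in the $3$-manifold $\Sigma_s\cap B_\epsilon$, hence to be empty for $i\ge 3$; and at a point of $\Sigma[p]_s$ with $p\in\{0,1,2\}$ the smoothness of $\Sigma_s$ and the transversality of $H(f_s)$ allow one to choose coordinates putting $f_s$ into the quoted normal form of $D(3,p)$. This is item (3).

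The isolated critical points of $f_s$ off $\Sigma_s$ are handled using the hypothesis on $f$: by the geometric characterisation recalled above, $f$ has away from $O$ only $A_1$ and $D(3,p)$ singularities, and the unfolding $F$ is versal for $f$ (as a function singular along $\Sigma_0$) in the sense of~\cite{FdB1} and~\cite{FdB2}. The deformation theory there provides a proper closed analytic bifurcation set $\Delta_2\subset S$ such that for $s\notin\Delta_2$ every critical point of $f_s|_{B_\epsilon}$ not on $\Sigma_s$ is a Morse point; enlarging it to a proper closed analytic $\Delta_3\supset\Delta_2$ one also arranges that these finitely many critical points have pairwise distinct critical values, all different from $0$ (an open dense condition on $s$). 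Put $\Delta:=\Delta_1\cup\Delta_3$. For item (1) and the ball $B_\eta$: conditions (1)--(3) in the definition of Milnor radii hold for $\epsilon,\delta$ at $s=0$, and the set of $s$ for which $f_s^{-1}(t)$ fails to meet $\partial B_\epsilon$ transversally for some $t$ in the compact disc $D_\delta$ is closed and avoids $0$; for the stratified statement at $t=0$ one uses that, by items (2) and (3), $f_s^{-1}(0)$ carries a Whitney stratification (regular part, the $\Sigma[i]_s$, the Morse points) varying continuously with $s$, so stratified transversality to $\partial B_\epsilon$ is again open at $s=0$. Hence a suitable $\eta>0$ exists, and after shrinking it all the previous genericity statements hold on $B_\eta\setminus\Delta$.

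Finally, item (1) says exactly that
\[(F,pr_2)\colon (B_\epsilon\times B_\eta)\cap(F,pr_2)^{-1}\big((D_\delta\times B_\eta)\setminus\calD\big)\longrightarrow (D_\delta\times B_\eta)\setminus\calD\]
is a proper submersion, submersive also along the boundary piece lying over $\partial B_\epsilon$; by the Ehresmann fibration theorem for manifolds with boundary it is a locally trivial fibration. As $\calD$ is a proper closed analytic subset of the connected manifold $D_\delta\times B_\eta$, the complement is connected, so all fibres are diffeomorphic; evaluating at $(t,0)$ with $t\in D_\delta\setminus\{0\}$ identifies the common fibre with $f^{-1}(t)\cap B_\epsilon$, the Milnor fibre of $f$. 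The main difficulty is genuinely located in the input from~\cite{FdB1} and~\cite{FdB2} used in the third paragraph --- the versality of $F$ for functions of finite extended codimension, which controls the splitting of the singularity at $O$ into Morse points off $\Sigma_s$; a secondary, more bookkeeping obstacle is reconciling the ``generic $s$'' statements (valid outside proper analytic subsets, a priori with $\epsilon$ allowed to depend on $s$) with the demand that one fixed pair $\epsilon,\delta$ and one ball $B_\eta$ serve all of (1)--(3) at once, in particular the stratified transversality at $t=0$.
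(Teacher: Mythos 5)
Your proposal is correct and matches the paper's approach: the paper gives no independent proof of this theorem, deducing it directly from the deformation-theoretic results of \cite{FdB1} and \cite{FdB2}, which is exactly where you locate the essential content (an analytic bifurcation set $\Delta$, Morsification of the critical points off $\Sigma_s$, and transversality of $H(f_s)$ to the corank stratification for generic $s$ — note that plain parametric transversality would only give a residual set of good parameters, and it is precisely those references that upgrade the bad locus to a proper closed analytic subset). The remaining steps you supply — openness of (stratified) transversality to $\partial B_\epsilon$ near $s=0$ over the compact disc $D_\delta$, and the Ehresmann argument over the connected complement of the discriminant $\calD$ — are the standard ones and are all that is needed beyond the cited input.
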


\begin{lema}
\label{2icis}
The set $\overline{\Sigma[1]_0}=V(det(H(f)),g_1,...,g_{n-3})$ is a $2$-dimensional i.c.i.s.
\end{lema}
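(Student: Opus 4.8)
The plan is to verify separately the two requirements for $\overline{\Sigma[1]_0}=V(\det(H(f)),g_1,\dots,g_{n-3})$ to be a $2$-dimensional i.c.i.s.: that it is a complete intersection of dimension $2$, and that it is smooth away from $O$. I would begin with the set-theoretic description. Since $H(f_0)$ is the restriction to $\Sigma_0$ of the symmetric matrix $(h_{i,j})$ of holomorphic germs and a symmetric matrix has corank $\ge 1$ exactly when its determinant vanishes, we have $\overline{SM(n-3,1)}=\bigcup_{i\ge 1}SM(n-3,i)=\{\det=0\}$, hence
\[
\overline{\Sigma[1]_0}=\bigcup_{i\ge 1}\Sigma[i]_0=\{p\in\Sigma_0:\det(h_{i,j})(p)=0\}=V(g_1,\dots,g_{n-3},\det(h_{i,j})).
\]
Thus $\overline{\Sigma[1]_0}$ is the zero locus of $n-2$ germs in $(\CC^n,O)$, so by Krull's principal ideal theorem every irreducible component has dimension at least $2$; it remains to exclude components of dimension $3$ --- which, lying inside the irreducible $3$-fold $\Sigma_0$, could only be $\Sigma_0$ itself --- and to establish smoothness off $O$.

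The essential ingredient is the geometric characterisation of functions of finite extended codimension (\cite{Za}; see also \cite{FdB2}): after shrinking $\epsilon$, at every point $q\in\Sigma_0\cap(B_\epsilon\setminus\{O\})$ the germ $f$ has a transverse singularity of type $D(3,p)$ with $p\in\{0,1,2\}$, which --- exactly as in part (3) of Theorem~\ref{teounfolding1} for the perturbed functions --- is the statement that on $\Sigma_0\cap(B_\epsilon\setminus\{O\})$, a smooth $3$-fold since $\Sigma_0$ is an i.c.i.s. with isolated singularity, the map $H(f_0)$ is transverse to the corank stratification $SM(n-3)=\bigcup_i SM(n-3,i)$. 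I would then feed in the geometry of the determinantal strata: $SM(n-3,1)$ is a smooth hypersurface, open and dense in the determinant hypersurface $\overline{SM(n-3,1)}$, while $SM(n-3,2)$ has codimension $3$ and $SM(n-3,i)$ codimension $>3$ for $i\ge 3$. Transversality then yields: (a) $\Sigma[i]_0$ is empty near $q$ for $i\ge 3$ and at most $0$-dimensional near $q$ for $i=2$, so the analytic germ $\overline{\Sigma[2]_0}$ has dimension $\le 0$ at $O$ and, shrinking $\epsilon$ once more, $\overline{\Sigma[2]_0}\cap B_\epsilon=\{O\}$; (b) $\Sigma[1]_0\cap(B_\epsilon\setminus\{O\})$ is a smooth hypersurface in the $3$-fold $\Sigma_0\cap(B_\epsilon\setminus\{O\})$, hence smooth of pure dimension $2$; (c) transversality to the open stratum $SM(n-3,1)$ at one of its points pushes nearby points of $\Sigma_0$ off $\overline{SM(n-3,1)}$, so $\det(h_{i,j})$ does not vanish identically on $\Sigma_0$. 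Granting, as is implicit, that $\Sigma[1]_0$ accumulates at $O$ (otherwise $f$ has transverse type $A_1$ along all of $\Sigma_0\setminus\{O\}$ and there is nothing to prove), parts (a) and (b) give that $\overline{\Sigma[1]_0}\cap(B_\epsilon\setminus\{O\})=\Sigma[1]_0\cap(B_\epsilon\setminus\{O\})$ is smooth of pure dimension $2$; in particular $\dim_O\overline{\Sigma[1]_0}=2$.

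For the complete-intersection property, observe that by (c) the element $\det(h_{i,j})|_{\Sigma_0}$ is a nonzero element of $\calO_{\Sigma_0,O}$, which is a Cohen--Macaulay local ring of dimension $3$ (being a complete intersection) that is regular in codimension one, hence normal, hence a domain; so $\det(h_{i,j})|_{\Sigma_0}$ is a nonzerodivisor. Therefore $g_1,\dots,g_{n-3},\det(h_{i,j})$ is a regular sequence in $\ocn$, so $\overline{\Sigma[1]_0}$ is a complete intersection of dimension $2$; being smooth outside $O$ by the previous paragraph, it is a $2$-dimensional isolated complete intersection singularity.

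The step I expect to be the main obstacle is the use of the geometric characterisation: one must be sure that ``$f$ has only transverse singularities $D(3,p)$, $p\le 2$, away from $O$'' genuinely amounts to transversality of $H(f_0)|_{\Sigma_0}$ to the corank stratification, and one must control the corank-$2$ locus, which is precisely where $\overline{SM(n-3,1)}$, and hence $\overline{\Sigma[1]_0}$, is singular. What rescues the argument is that this locus is $0$-dimensional away from $O$ and therefore, being analytic, collapses to $\{O\}$ in a small enough representative; the remaining ingredients (determinantal varieties, Cohen--Macaulayness, Krull dimension) are routine.
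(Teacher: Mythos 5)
Your proposal is correct and follows essentially the same route as the paper: the key input in both is that finiteness of the extended codimension forces $H(f)|_{\Sigma_0}$ to be transverse to the corank stratification of $SM(n-3)$ away from the origin (the paper cites~\cite{FdB1} for the identification of the non-transversality locus with the locus of nonzero local extended codimension, and then reads off smoothness from the $D(3,1)$ normal form, which is equivalent to your direct transversality argument at the stratum $SM(n-3,1)$). The only difference is that you additionally spell out the regular-sequence and dimension verifications (Cohen--Macaulayness of $\calO_{\Sigma_0,O}$, non-vanishing of $\det(h_{i,j})|_{\Sigma_0}$) that the paper treats as implicit.
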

\begin{proof}
For any $s\in S$ the set of points of $\Sigma_s\cap B_\epsilon$ where $H(f_s)$ has corank at least $1$ coincides with $V(det(H(f_s))$. Denote by $c_{I_x,e}((f_s)_x)$
 the extended codimension of the germ $f_s$ at $x$ with respect to the ideal $I_x$ defining the germ $(\Sigma_s,x)$. In~\cite{FdB1} it is shown
that the set of points such that $H(f_s)$ is not transversal to the stratification of $SM(n-3)$ by corank coincides precisely with the set of points at which 
$c_{I_x,e}((f_s)_x)$ is non-zero. Therefore the only point at which $H(f)$ is not transversal to the corank stratification
is the origin if we take $\epsilon$ small enough. Thus at any $x\in\Sigma_0\setminus\{0\}$ the germ $f_x$ is of type $D(3,0)$ if $det(H(f)(x)\neq 0$ and of type
$D(3,1)$ if $det(H(f)(x)=0$. Inspecting the normal form of the $D(3,1)$ singularity we find that $V(det(H(f)),g_1,...,g_{n-3})$ has an isolated singularity at
the origin. 
\end{proof}

Now we study the deformation $\overline{\Sigma[1]_s}:=V(det(H(f_s)),G_{1,s},...,G_{n-3,s})\cap B_\epsilon$ as we move in $S$. Choose $s\in S\setminus\Delta$. 
Since $H(f_s)$ is transversal to the stratification by corank there is a finite set of points $\Sigma[2]_s$ in $\Sigma[1]_s$ of type $D(3,2)$ and the rest of the points 
are of type $D(3,1)$. The normal form of the $D(3,2)$ singularity gives that $\overline{\Sigma[1]_s}$ has an $A_1$-type singularity at any point in $\Sigma[2]_s$. Let $a$ denote the 
cardinality of $\Sigma_s[2]$. In the next Lemma we show that $a$ is independent on $s$.

\begin{lema}
\label{connectedcover}
The restriction 
$$pr_2:(\Sigma,\overline{\Sigma[1]},\Sigma[2])\cap pr_2^{-1}(S\setminus \Delta)\to S\setminus\Delta$$
is a topological locally trivial fibration of triples such that $\Sigma_s$ is the Milnor fibre of the i.c.i.s. $\Sigma_0$, the surface $\overline{\Sigma[1]}_s$ 
is a deformation of 
$\overline{\Sigma[1]_0}$ having precisely $a$ singularities of $A_1$-type in $\Sigma[2]_s$. Moreover the restriction
\begin{equation}
\label{monodromy1}
 pr_2:\Sigma[2]\cap pr_2^{-1}(S\setminus \Delta)\to S\setminus\Delta
\end{equation}
is a covering and $\Sigma[2]\cap pr_2^{-1}(S\setminus \Delta)$ is connected.
\end{lema}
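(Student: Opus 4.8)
The plan is to prove the statement in three stages, corresponding to the three assertions: local triviality of the triple over $S\setminus\Delta$, the covering property of $\Sigma[2]$, and the connectedness of $\Sigma[2]$.

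For the first stage, I would argue that $(F,\mathrm{pr}_2)$ together with the maps $G_1,\dots,G_{n-3}$ and the minors of $H(F)$ define, over $B_\eta\cap(D\setminus\Delta)$, a stratified set whose strata are $\Sigma\setminus\overline{\Sigma[1]}$, $\overline{\Sigma[1]}\setminus\Sigma[2]$ and $\Sigma[2]$, each of which projects submersively to $S\setminus\Delta$. The key input here is Theorem~\ref{teounfolding1}(3): for $s\notin\Delta$ the Hessian map $H(f_s)$ is transversal to the corank stratification of $SM(n-3)$, so $\Sigma[1]_s$ and $\Sigma[2]_s$ are smooth of the expected codimensions. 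Transversality of $H(F)$ to the corank stratification in the total space (again from~\cite{FdB1}) plus smoothness of $\Sigma$ gives that each stratum of the triple is a manifold and $\mathrm{pr}_2$ restricted to it is a submersion; shrinking $\eta$ if necessary one can also arrange properness of $\mathrm{pr}_2$ on $\Sigma\cap B_\epsilon$ over $S\setminus\Delta$. Then Thom's first isotopy lemma (applied to the stratified map $\mathrm{pr}_2$ with the Whitney stratification just described, after checking Whitney conditions — which hold because the strata are pulled back by a transversal map from the fixed Whitney stratification of $SM(n-3)$) yields a topological locally trivial fibration of triples. That $\Sigma_s$ is the Milnor fibre of $\Sigma_0$ is already in Theorem~\ref{teounfolding1}(3), and that $\overline{\Sigma[1]}_s$ has exactly $A_1$ singularities at the points of $\Sigma[2]_s$ follows from inspecting the $D(3,2)$ normal form, as noted before the Lemma.

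For the second stage, the covering property of $\mathrm{pr}_2:\Sigma[2]\cap\mathrm{pr}_2^{-1}(S\setminus\Delta)\to S\setminus\Delta$ is immediate once we know that $\Sigma[2]_s$ is finite for each $s$ (Theorem~\ref{teounfolding1}(3): it is a manifold of codimension $3\cdot 4/2=6$ in the $3$-dimensional manifold $\Sigma_s\cap B_\epsilon$, hence $0$-dimensional, and it is closed in the compact $\Sigma_s\cap B_\epsilon$, hence finite) together with the local triviality just established, which gives a local homeomorphism that is proper with finite fibres. In particular the cardinality $a=\#\Sigma[2]_s$ is locally constant, hence constant if $S\setminus\Delta$ is connected — and $S\setminus\Delta$ is connected because $S=B\times SM(n-3)$ is a connected complex manifold and $\Delta$ is a proper closed complex-analytic subset.

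The main obstacle is the last assertion: that the total space $\Sigma[2]\cap\mathrm{pr}_2^{-1}(S\setminus\Delta)$ is \emph{connected}, i.e. that the monodromy of the covering~(\ref{monodromy1}) acts transitively on the fibre. The plan is to exhibit a particular parameter value $s_0\in S\setminus\Delta$ over which the $a$ points of $\Sigma[2]_{s_0}$ can be connected by paths as the parameter varies. A natural choice: fix $b$ generic so that $\Sigma_b\cap B_\epsilon$ is a smooth $3$-fold, and then vary only the symmetric-matrix parameter $(c_{i,j})\in SM(n-3)$; the map $H(f_s)|_{\Sigma_b}$ is then a perturbation of a fixed map by an affine term, so $\Sigma[2]_s$ is the transverse preimage of $\overline{SM(n-3,2)}$ (a fixed irreducible subvariety of codimension $6$) under a family of maps into $SM(n-3)$ that, for generic linear part, is essentially a generic linear projection. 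One then checks that the subvariety of $\CC^n\times S$ defined by "corank $H(F)\ge 2$" is irreducible over the generic locus — this should follow because $\overline{SM(n-3,2)}$ is irreducible and $H(F)$ is, after the unfolding, sufficiently surjective/dominant onto the relevant piece of $SM(n-3)$ (here one uses that the semiuniversal unfolding $G$ together with the full family of symmetric perturbations $(c_{i,j})$ makes the combined map a submersion, by the transversality statement of Theorem~\ref{teounfolding1}(3) applied on the total space). Irreducibility of the total space $\overline{\Sigma[2]}$ of the family, minus its intersection with $\mathrm{pr}_2^{-1}(\Delta)$, forces connectedness of the generic fibre-bundle total space $\Sigma[2]\cap\mathrm{pr}_2^{-1}(S\setminus\Delta)$, since a smooth connected (irreducibility $\Rightarrow$ connected) quasi-projective variety is connected in the analytic topology. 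The delicate point to get right is the irreducibility/dominance claim for $H(F)$ onto $\overline{SM(n-3,2)}$, which is where I expect the real work to lie; everything else is a routine application of Thom's isotopy lemma and the structure theory already set up in Theorem~\ref{teounfolding1}.
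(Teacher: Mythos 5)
Your stages 1 and 2 (local triviality via Thom's first isotopy lemma, and the covering property from finiteness of the fibres) are fine and correspond to what the paper dismisses as ``easy after the normal forms''. A small slip there: the codimension of $\Sigma[2]_s$ in $\Sigma_s$ is $i(i+1)/2$ with $i=2$, i.e.\ $3$, not $3\cdot 4/2=6$; codimension $6$ in a $3$-fold would make $\Sigma[2]_s$ empty. The conclusion (zero-dimensional, hence finite) is still correct.

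The genuine gap is exactly where you place it, in stage 3, and your proposed fix does not close it. You argue that $H(F)^{-1}(SM(n-3,2))$ should be irreducible ``because $\overline{SM(n-3,2)}$ is irreducible and $H(F)$ is sufficiently surjective/dominant''. Dominance (or even submersivity, which is clear since the derivative of $H(F)$ in the $(c_{i,j})$ directions is the identity) does not by itself give irreducibility or connectedness of the preimage of an irreducible set: a nontrivial covering map is a counterexample. What actually closes the gap, and what the paper uses, is that $H(F):\Sigma\to SM(n-3)$ is a \emph{globally trivial} fibration, for a completely explicit reason: since the matrix parameter enters additively, the fibre over a matrix $M$ is the graph $\{(x,b,M-(h_{i,j}(x)))\,:\,(x,b)\in\calV\}$, where $\calV=V(G_1,\dots,G_{n-3})$ is the (smooth, connected) total space of the versal deformation of the i.c.i.s.; so $\phi_M(x):=(x,M-(h_{i,j}(x)))$ trivialises the fibration. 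Hence $H(F)^{-1}(SM(n-3,2))\cong\calV\times SM(n-3,2)$ is a connected analytic manifold ($SM(n-3,2)$ being irreducible), and $\Sigma[2]\cap pr_2^{-1}(S\setminus\Delta)$ is a dense open subset of it (the complement of a proper analytic subset), hence connected. Your restriction to a fixed generic $b$ and the appeal to ``a generic linear projection'' are unnecessary once this trivialisation is observed; without it, the irreducibility claim you flag as ``the real work'' remains unproved in your write-up.
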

\begin{proof}
The topological triviality statements are easy after the normal forms of the $D(3,p)$ singularities.

The space $\Sigma$ is smooth since it is the product of the total space $ \calV$ of the versal deformation of the i.c.i.s. $V(g_1,...,g_{n-3})$ (which is smooth) with the space
$SM(n-3)$. For any matrix $M\in SM(n-3)$ the fibre $H(F)^{-1}(M)$ is diffeomorphic to $ \calV$, being the diffeomorphism $\phi_M: \calV\to H(F)^{-1}(M)$ defined by
$\phi_M(x):=(x,M-(h_{i,j}(x))$. This shows that $H(F):\Sigma\to SM(n-3)$ is a trivial fibration. The set $\Sigma[2]$ is connected because it is a Zariski dense open subset in
the analytic manifold $H(F)^{-1}(SM(n-3,2))$, which is diffeomorphic to the product $ \calV\times SM(n-3,2)$, being $SM(n-3,2)$ irreducible.
\end{proof}

We summarise for further reference the main invariants introduced for the function $f$.

\begin{definition}
\label{defnueva}
Define $\mu_0$ and $\mu_1$ to be the Milnor numbers at the origin of the i.c.i.s. $\Sigma_0$ and $\overline{\Sigma[1]_0}$. For $s\in S\setminus\Delta$ close to the origin 
we define $a$ to be the cardinality of $\Sigma[2]_s$ and $\#A_1$ to be the number of Morse points of $f_s$.
\end{definition}

\subsection{The $corank=2$ case}
\label{corrango2}

We will need an slightly larger unfolding of $f$ in the particular case in which $corank(H(f)(O))$ is precisely equal to $2$. In that case, after possibly changing the 
generators of the i.c.i.s we can assume that $f$ is of the form
\begin{equation}
\label{formaespecial}
f=(g_1,g_2)(h_{i,j})(g_1,g_2)^t+\sum_{i=3}^{n-3}g_i^2.
\end{equation}
When $c_{I,e}(f)$ is finite the mapping 
\[H(f):\Sigma\to SM(2)\]
is transverse to the corank stratification outside the origin. Therefore the origin is an isolated point of the locus where $corank(H(f))$ is at least $2$. 
Since in this case the corank $2$ locus is defined 
by the vanishing of the $n$ functions $h_{1,1},h_{1,2},h_{2,2},g_1,...,g_{n-3}$, we have that $\Sigma[2]_0$ is a $0$-dimensional i.c.i.s. concentrated at the origin. Let $a$ be 
its length as $0$-dimensional scheme. Let 
\[\Sigma[2]=V(H_{1,1},H_{1,2},H_{2,2},G_1,...,G_{n-3})\subset\CC^n\times S\to S\]
be the versal deformation of the i.c.i.s. $V(h_{1,1},h_{1,2},h_{2,2},g_1,...,g_{n-3})$. Any fibre $\Sigma[2]_s$ is a $0$-dimensional scheme of lenght $a$. The discriminant
 $\Lambda\subset S$ is the set of parameters where $\Sigma[2]_s$ is reduced. By versality the discriminant is irreducible and
 reduced~(Corollary~4.11~and~Proposition~6.11~of~\cite{Lo}), and its 
smooth locus $\dot{\Lambda}$ is the set of parameters such that $\Sigma[2]_s$ has exactly a fat point of length $2$ and is otherwise reduced 
(Lemma~4.9~of~\cite{Lo}).

\begin{definition}
\label{vanishcycle}
Fix a base point $s_0\in S\setminus\Lambda$. Any path $\gamma:[0,1]\to S$ such that $\gamma(0)=s_0$, $\gamma([0,1))$ is included in $S\setminus\Lambda$ and $\gamma(1)$ belongs
to a smooth point of $\Lambda$ induces a deformation $\{\Sigma[2]_t\}_{t\in [0,1]}$ along $\gamma$ such that precisely two points $\{p_0,p_1\}$ in $\Sigma[2]_0=\Sigma[2]_{s_0}$ collapse to the same
point in $\Sigma[2]_1$. The {\em vanishing cycle in $\Sigma[2]_{s_0}$ associated to $\gamma$} is, by definition, the pair $\{p_0,p_1\}$.
\end{definition}

\begin{lema}
\label{fullequivalence}
All the points of $\Sigma[2]_{s_0}$ are at the same equivalence class by the equivalence relation generated by the vanishing cycles.
\end{lema}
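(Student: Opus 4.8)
The plan is to deduce the statement from the connectedness of a suitable parameter space together with the irreducibility of the discriminant $\Lambda$. The key point is that by versality (as recorded after Definition~\ref{vanishcycle}, following Corollary~4.11, Proposition~6.11 and Lemma~4.9 of~\cite{Lo}), $\Lambda$ is irreducible and reduced, its smooth locus $\dot\Lambda$ is dense, and at a point of $\dot\Lambda$ exactly one pair of points of $\Sigma[2]_s$ has collapsed into a length-$2$ fat point while the remaining $a-2$ points stay distinct. Thus every time we cross $\Lambda$ transversally at a smooth point, the effect on the fibre is exactly to identify one vanishing pair $\{p_0,p_1\}$, which is precisely one of the generators of our equivalence relation.

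First I would fix the base point $s_0\in S\setminus\Lambda$ and consider the universal cover, or more simply the monodromy action of $\pi_1(S\setminus\Lambda,s_0)$ on the fibre $\Sigma[2]_{s_0}$, which is a finite set of $a$ points; since $\Sigma[2]\cap pr_2^{-1}(S\setminus\Lambda)\to S\setminus\Lambda$ is a covering of the smooth connected base $S\setminus\Lambda$ (the argument is the same as in Lemma~\ref{connectedcover}: $\Sigma[2]$ is Zariski dense in the smooth connected space $H(F)^{-1}(SM(n-3,2))$, hence $\Sigma[2]\cap pr_2^{-1}(S\setminus\Lambda)$ is connected), the monodromy acts transitively on $\Sigma[2]_{s_0}$. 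Next I would show that this transitive monodromy group is generated by the ``local monodromies'' around the smooth branch of $\Lambda$, i.e. by small loops $\gamma\cdot\ell\cdot\gamma^{-1}$ where $\gamma$ runs from $s_0$ to a point near $\dot\Lambda$ and $\ell$ is a small loop around $\dot\Lambda$; this is the standard fact that $\pi_1$ of the complement of an irreducible hypersurface is normally generated by a meridian, so here $\pi_1(S\setminus\Lambda)$ is generated by conjugates of a single meridian.

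The final step is to translate: crossing $\dot\Lambda$ along a path $\gamma$ as in Definition~\ref{vanishcycle} produces a specific vanishing cycle $\{p_0,p_1\}$, and the local monodromy around that branch either fixes all points of $\Sigma[2]_{s_0}$ (when the two collapsing points are interchanged, in the ramified case) or — what matters for us — in every case the two points $p_0,p_1$ become identified in $\Sigma[2]_1$, so the transitivity of the full monodromy, decomposed into such local pieces, forces any two points of $\Sigma[2]_{s_0}$ to be connected by a chain of vanishing cycles. Concretely: given $p,q\in\Sigma[2]_{s_0}$, transitivity gives a loop $\alpha$ with monodromy sending $p$ to $q$; write $\alpha$ as a product of conjugated meridians; each such meridian, read as a path to $\dot\Lambda$ and back, realises a vanishing cycle and shows that the point before crossing and the point after are in the same class; composing along $\alpha$ shows $p\sim q$.

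The main obstacle I anticipate is the careful bookkeeping in this last translation: one must check that the combinatorial identification made by a vanishing cycle (the pair $\{p_0,p_1\}$ of Definition~\ref{vanishcycle}) is compatible with the permutation-theoretic description of the local monodromy, i.e. that ``the two points that collapse'' and ``the support of the transposition (or the fixed pair)'' coincide under the identification of fibres along $\gamma$. This is essentially the statement that the deformation $\{\Sigma[2]_t\}_{t\in[0,1]}$ degenerates a length-$2$ reduced scheme to a length-$2$ fat point exactly on the pair $\{p_0,p_1\}$, which is immediate from the local analytic model of a node of $\overline{\Sigma[1]_s}$ (equivalently, an $A_1$-point), but needs to be stated cleanly. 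Everything else — connectedness of the covering, irreducibility of $\Lambda$, normal generation by a meridian — is either proved above in Lemma~\ref{connectedcover} or is standard.
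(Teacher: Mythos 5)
Your proposal is correct, but it follows a genuinely different route from the paper. The paper's proof slices the base $S$ by a generic line $l$ through $s_0$ meeting $\Lambda$ transversely at smooth points, notes that $\psi^{-1}(l\cap U)$ is the Milnor fibre of a $1$-dimensional i.c.i.s.\ and hence connected (Looijenga), and uses the standard description of this total space as the fibre $\Sigma[2]_{s_0}$ with a $1$-cell attached along each vanishing cycle of a distinguished system of paths; connectivity then forces a single equivalence class. You instead argue via the monodromy representation: transitivity of $\rho$ on $\Sigma[2]_{s_0}$ (from connectedness of the covering over $S\setminus\Lambda$), generation of $\pi_1(S\setminus\Lambda,s_0)$ by meridians around $\dot\Lambda$, the observation that each meridian acts by (at most) the transposition of its vanishing pair, and the elementary fact that a transitive permutation group generated by transpositions has connected support graph. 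Both arguments are sound and both ultimately rest on the connectivity of a total space plus the local structure of $\Lambda$ at its smooth points; the paper's version is shorter once one grants the cell-attachment picture of a distinguished basis and produces an explicit spanning chain of vanishing cycles from one line section, while yours trades that for the (equally standard) meridian-generation of $\pi_1$ of a hypersurface complement in a ball and is somewhat more flexible. Two small points to tidy up: your parenthetical justification of connectedness quotes the mechanism of Lemma~\ref{connectedcover} (Zariski density in $H(F)^{-1}(SM(n-3,2))$), which does not literally apply here since in the $corank=2$ setting $\Sigma[2]$ is the total space of the versal deformation of the $0$-dimensional i.c.i.s.\ over a different base; the correct reason is simply that this total space is a connected (indeed contractible) complex manifold from which one removes the proper analytic subset $\psi^{-1}(\Lambda)$. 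Also, the sentence about the local monodromy ``fixing all points when the two collapsing points are interchanged'' is garbled, but your hedge is harmless: whether a given meridian acts trivially or by the transposition of its vanishing pair, every generator of the monodromy group moves each point only within its equivalence class, which is all the final step needs.
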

\begin{proof}
The base $S$ of the versal unfolding 
\[\psi:\Sigma[2]\to S\]
of the $0$-dimensional i.c.i.s. $\Sigma[2]_0$ can be identified with a neighbourhood $U$ of the origin in $\CC^N$. We choose a straight line $l$ through
$s_0$ such that $l$ meets $\Lambda$ transversely at smooth points. The neighbourhood and the line can be chosen so that $\psi^{-1}(l\cap U)$ is the Milnor fibre of a 
$1$-dimensional i.c.i.s. Therefore $\psi^{-1}(l\cap U)$ is connected (\cite{Lo}~Chapter~5). Choose a system of paths $\{\gamma_i\}_{i=1}^M$ joining $s_0$ with each of the
points of $l\cap\Lambda$, without self intersections and not intersecting pairwise except at $s_0$. Since the space $\psi^{-1}(l\cap U)$ is homotopy equivalent to the result 
of attaching a $1$-cell at each of the vanishing cycles associted to the paths $\{\gamma_i\}_{i=1}^M$, the connectivity of $\psi^{-1}(l\cap U)$ proves the lemma.
\end{proof}

Given any finite set $K$ denote by  $Aut(K)$ its permutation group. The monodromy action
\[\rho:\pi_1(S\setminus\Lambda,s_0)\to Aut(\Sigma[2]_{s_0})\]
induces a monodromy action 
\[\rho_2:\pi_1(S\setminus\Lambda,s_0)\to Aut(Sym^2(\Sigma[2]_{s_0})\setminus Diag,\]
where $Sym^2(\Sigma[2]_{s_0})\setminus Diag$ is the second symmetric product of $\Sigma[2]_{s_0}$ minus its diagonal (that is the set of subsets of cardinality precisely $2$). The set 
of vanishing cycles is a subset of $Sym^2(\Sigma[2]_{s_0})\setminus Diag$.

\begin{lema}
\label{monodromiatransitiva}
The monodromy action $\rho_2$ preserves the set of vanishing cycles and acts transitively on it. 
In other words, the set of vanishing cycles is an orbit by the monodromy action.
\end{lema}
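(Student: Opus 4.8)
The plan is to reduce transitivity of $\rho_2$ on the set of vanishing cycles to a connectivity statement about a suitable incidence variety over the smooth locus of the discriminant, and to obtain that connectivity from the irreducibility of $\dot\Lambda$ together with the covering property of $\psi$ over $S\setminus\Lambda$. First I would observe that the assertion ``$\rho_2$ preserves the set of vanishing cycles'' is essentially formal: if $\gamma$ is a path from $s_0$ to a smooth point of $\Lambda$ realising a vanishing cycle $\{p_0,p_1\}$, and $\alpha$ is a loop at $s_0$, then the concatenation $\alpha^{-1}\cdot\gamma$ is again a path from $s_0$ to the same smooth point of $\Lambda$ along which the same two sheets of $\psi$ collide; tracking the labelling shows the vanishing cycle associated to $\alpha^{-1}\cdot\gamma$ is exactly $\rho(\alpha)(\{p_0,p_1\})=\rho_2(\alpha)(\{p_0,p_1\})$. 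Hence the orbit of a single vanishing cycle under $\rho_2$ consists of vanishing cycles, and conversely every vanishing cycle is obtained in this way; so it suffices to show any two vanishing cycles lie in one $\rho_2$-orbit.

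The key step is to encode vanishing cycles geometrically. Over $\dot\Lambda$ the fibre $\Sigma[2]_s$ has exactly one fat point of length $2$ (and is otherwise reduced), so there is a well-defined map $c:\dot\Lambda\to$ (the relative symmetric square of $\psi$) assigning to $s\in\dot\Lambda$ the length-$2$ subscheme; concretely, pulling back along a small transverse disc to $\dot\Lambda$ at $\gamma(1)$ and using the covering $\psi$ over $S\setminus\Lambda$, the vanishing cycle $\{p_0,p_1\}$ is the limit of the two sheets that merge. Thus the set of vanishing cycles is precisely the image, under parallel transport back to $s_0$, of the connected components of an incidence space
\[
\calI:=\{(s,\{q_0,q_1\})\ :\ s\in \dot\Lambda,\ \{q_0,q_1\}\text{ the colliding pair over a point near }s\},
\]
or more precisely of $\widetilde{\calI}:=\{(\gamma,\{p_0,p_1\})\}$ fibred over $\dot\Lambda$ with path data; the monodromy group $\rho_2(\pi_1(S\setminus\Lambda,s_0))$ acts, and two vanishing cycles are in the same orbit exactly when the corresponding points of this incidence space lie in the same path-component after we allow the base point to travel in $S\setminus\Lambda$. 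So the lemma follows once I show this incidence space is connected.

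For connectivity I would argue as in Lemma~\ref{fullequivalence}: $\dot\Lambda$ is connected, being the smooth locus of the irreducible variety $\Lambda$ (Corollary~4.11 and Proposition~6.11 of~\cite{Lo}), and over $\dot\Lambda$ the assignment of the colliding pair is, locally, a two-to-one choice governed by the covering $\psi$ restricted to a punctured transverse disc; monodromy around $\dot\Lambda$ and around loops in $S\setminus\Lambda$ based near $\dot\Lambda$ permutes these local choices. Concretely, pick two vanishing cycles $\{p_0,p_1\}$ and $\{p_0',p_1'\}$, realised by paths $\gamma,\gamma'$ ending at smooth points $\lambda,\lambda'\in\dot\Lambda$. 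Join $\lambda$ to $\lambda'$ by a path $\beta$ inside $\dot\Lambda$; lifting $\beta$ to a path of nearby parameters in $S\setminus\Lambda$ that stays in a tubular neighbourhood of $\dot\Lambda$ and carries the colliding pair continuously, one transports $\{p_0,p_1\}$ to a vanishing cycle at $\lambda'$; comparing with $\{p_0',p_1'\}$ via the fibre $\psi^{-1}$ over the endpoint (which is a single length-$2$ scheme) shows the two differ by an element of $\rho_2(\pi_1(S\setminus\Lambda,s_0))$ coming from a small loop around $\dot\Lambda$. The main obstacle is making this tube-and-lift construction precise: one must check that the ``colliding pair'' really does extend to a continuous (hence, by the covering property, canonically determined) section over a neighbourhood of a path in $\dot\Lambda$, and that the ambiguity introduced when the path in $\dot\Lambda$ is perturbed off $\Lambda$ is captured by genuine monodromy in $\pi_1(S\setminus\Lambda,s_0)$ rather than by some coarser identification. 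Once that is in place, transitivity is immediate, completing the proof.
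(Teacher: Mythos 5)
Your proposal is correct and follows essentially the same route as the paper: preservation of the set of vanishing cycles via concatenation of a loop with the defining path, and transitivity as a consequence of the irreducibility of the discriminant. The only difference is that the paper simply cites this transitivity as a classical fact (\cite{AGV}, Chapter~3), whereas you sketch the standard proof of that fact (connectivity of $\dot\Lambda$, transport of the colliding pair along a path pushed off $\Lambda$); your sketch is the usual argument and the points you flag as needing care are exactly the ones handled in the classical reference.
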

\begin{proof}
A vanishing cycle induced by a path $\gamma$ is transformed by an element $[\alpha]\in\pi_1(S\setminus\Lambda,s_0)$ to the vanishing cycles induced by the concatenation 
path $\alpha\centerdot\gamma$. The transitivity is a classical consequence of the irreducibility of the discriminant~(\cite{AGV}~Chapter~3).
\end{proof}

We enlarge the unfolding of $f$ defined in~(\ref{unfolding1}) by considering the following one instead of it:
\[F:\CC^n\times S\to\CC\]
given by
\begin{equation}
\label{unfoldinggrande}
F:=(G_1,G_2)(H_{i,j})(G_1,G_2)^t+\sum_{i=3}^{n-3}G_i^2.
\end{equation}

The statements of Theorem~\ref{teounfolding1} and Lemma~\ref{2icis} clearly remain true for this unfolding. 

\section{Homology splitting}
\label{sechomsplit}

We will compute the homology of the Milnor fibre using a general method of Siersma~\cite{Si1},~\cite{Si2},~\cite{Si3}.

We have chosen radii $\epsilon$ and $\delta$ for a Milnor fibration of $f$. In that situation the total space $X_0:=B_\epsilon\cap f^{-1}(D_\delta)$ of the representative 
\[f:X_0\to\CC\]
is contractible.

Consider the versal unfolding $F:\CC^n\times S\to\CC$ defined in the previous section.
Choose a direction in $S$ such that the line $l$ through the origin $O$ of $S$ in this direction has $O$ as an isolated point of $\Delta$. Let $D_\xi$ be a disc in 
$l$ around $O$ only meeting $\Delta$ in $O$. Consider the associated $1$ parameter unfolding
\[F:\CC^n\times D_\xi\to\CC.\]
Denote by $f_s$ the function $f_s(x):=F(x,s)$. By Ehresmann fibration theorem $X_s:=B_\epsilon\cap f_s^{-1}(D_\eta)$ is diffeomorphic to $X_0$ and hence it is contractible. If $s\neq 0$ the function
\[f_s:X_s\to D_\delta\]
is a locally trivial fibration over $D_\delta\setminus\{0,v_1,...,v_r\}$, where $\{0,v_1,...,v_r\}$ are the critical values of $f_s$. Each $v_i\neq 0$ is the image of 
precisely $1$ singular point of type $A_1$ of $f_s$. The fibre of $f_s$ over any point $w$ not in $\{0,v_1,...,v_r\}$ is diffeomorphic to the Milnor fibre of $f$. 
Therefore we are interested in the homology $H_k(f_s^{-1}(w);\ZZ)$, which is isomorphic to $H_{k+1}(X_s,f_s^{-1}(w);\ZZ)$ by the contractibility of $X_s$.

Consider $D_0,D_1,\ldots,D_{r}$ a system of disjoint small disks inside $D_\delta$ centered in $0,v_1,\ldots v_{k}$ respectively. Choose points $t_i\in \partial D_i$, 
and disjoint paths $\alpha_i$ joining $t_0$ with $t_i$. We can take $w=t_0$. Define
\[G:=\cup_{i=1}^{r}\alpha_i \bigcup \cup_{i=0}^{r}D_i.\]
It is clear that $G$ is a deformation retract of $D_\Delta$, and since $f_s$ is a locally trivial fibration outside $G$, we have that 
\[H_k(X_s,f_s^{-1}(w))=H_k(f_s^{-1}(G),f_s^{-1}(w)).\]
By excision: 
\[H_k(f_t^{-1}(G),f_s^{-1}(w))=\oplus_{i=0}^{r}H_k(f_s^{-1}(D_i),f_s^{-1}(t_i)).\]

It is classical from Picard-Lefschetz theory that for any $i>0$ we have
\[H_n(f_s^{-1}(D_i),f_s^{-1}(t_i))\cong\ZZ\] and 
\[H_k(f_s^{-1}(D_i),f_s^{-1}(t_i))=0\]
if $k\neq n$.

Now let $T$ be a tubular neighbourhood of $\Sigma_s$. We can take for example $T:=(G_{1,s},...,G_{n-3,s})^{-1}(B)$ for $B$ a small ball around the origin in $\CC^{n-3}$. Taking $T$ small enough and $D_0$ small in comparison with $T$ we can assume that $f_s^{-1}(t)$ meets the boundary $\partial T$ transversely for any $t\in D_0$. 
We define
\begin{equation}
\label{partechunga}
\calM:=f_s^{-1}(t_0)\cap T.
\end{equation}

By this tranversality, and because of the fact that $f_s$ has no critical points in $f_s^{-1}(D_0)\setminus T$, the part of the space $f_s^{-1}(D_0)$ that lives outside
$T$ can be retracted to $f_s^{-1}(t_0)$. This means that the pair $(f_s^{-1}(D_0),f_s^{-1}(t_0))$ is homotopy equivalent to the pair $(f_s^{-1}(t_0)\cup T,f_s^{-1}(t_0))$. By excision we have 
\[H_k(f_s^{-1}(D_0),f_s^{-1}(t_0))\cong H_k(T,f_s^{-1}(t_0)\cap T)=H_k(T,\calM).\]
We summarise what we have obtained:

\begin{prop}
\label{homsplit}
Denote the Milnor fibre of $f$ by $\mathbf{F}_f$. 
Let $r$ be the number of $A_1$ points that $f_s$ has in $B_\epsilon$ for $s\in S\setminus\Delta$ close to the origin of $S$. Then 
\[H_{n-1}(\mathbf{F}_f;\ZZ)\cong H_n(T,\calM;\ZZ)\oplus\ZZ^r,\]
\[H_{k}(\mathbf{F}_f;\ZZ)\cong H_{k+1}(T,\calM;\ZZ)\]
for $1\leq k\leq n-2$. By connectivity
\[H_{0}(\mathbf{F}_f;\ZZ)\cong\ZZ.\]
\end{prop}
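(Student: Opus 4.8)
The plan is to assemble Proposition~\ref{homsplit} from the pieces already set up in Section~\ref{sechomsplit}, so the proof is essentially a bookkeeping argument that chains together the homotopy equivalences and excisions established above. First I would recall the contractibility of $X_s$, which is a consequence of Ehresmann's theorem and the contractibility of $X_0$ (itself the total space of a Milnor representative of $f$), and use the long exact sequence of the pair $(X_s, f_s^{-1}(w))$ to get the isomorphism $H_k(f_s^{-1}(w);\ZZ)\cong H_{k+1}(X_s,f_s^{-1}(w);\ZZ)$ for all $k\geq 1$, together with $H_0(f_s^{-1}(w);\ZZ)\cong\ZZ$ from connectivity of the Milnor fibre (Kato--Matsumoto, or directly since $n\geq 4$). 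Since $f_s^{-1}(w)$ is diffeomorphic to $\mathbf{F}_f$, this rephrases everything in terms of $H_*(\mathbf{F}_f)$.

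Next I would invoke the decomposition $H_k(X_s,f_s^{-1}(w))\cong\bigoplus_{i=0}^{r}H_k(f_s^{-1}(D_i),f_s^{-1}(t_i))$ obtained by retracting $D_\delta$ onto the graph $G$ and applying excision. For the indices $i>0$, Picard--Lefschetz theory gives $H_n(f_s^{-1}(D_i),f_s^{-1}(t_i))\cong\ZZ$ and vanishing in all other degrees, because each $v_i$ is the image of a single Morse point; there are $r$ such terms, contributing $\ZZ^r$ concentrated in degree $n$. For the index $i=0$, the transversality of $f_s^{-1}(t)$ to $\partial T$ and the absence of critical points of $f_s$ in $f_s^{-1}(D_0)\setminus T$ yield the homotopy equivalence of pairs $(f_s^{-1}(D_0),f_s^{-1}(t_0))\simeq(f_s^{-1}(t_0)\cup T,f_s^{-1}(t_0))$, and then excision identifies this with $(T,\calM)$, so $H_k(f_s^{-1}(D_0),f_s^{-1}(t_0))\cong H_k(T,\calM)$.

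Combining: $H_{k+1}(X_s,f_s^{-1}(w))\cong H_{k+1}(T,\calM)\oplus\bigl(\ZZ^r\text{ if }k+1=n\bigr)$, which after the shift gives $H_{n-1}(\mathbf{F}_f;\ZZ)\cong H_n(T,\calM;\ZZ)\oplus\ZZ^r$ and $H_k(\mathbf{F}_f;\ZZ)\cong H_{k+1}(T,\calM;\ZZ)$ for $1\leq k\leq n-2$, with $H_0(\mathbf{F}_f;\ZZ)\cong\ZZ$. The only genuine subtlety — really the one point needing care rather than a true obstacle — is making sure the degree ranges line up: one must check that $H_{k+1}(T,\calM)$ does not also receive a spurious contribution in degree $n$ that would compete with $\ZZ^r$, i.e. that the splitting in Proposition~\ref{homsplit} is stated with the $\ZZ^r$ appearing \emph{only} in the top degree $n-1$ of $\mathbf{F}_f$; this follows because the Picard--Lefschetz summands for $i>0$ live precisely in degree $n$ of the pair $(X_s,f_s^{-1}(w))$ and nowhere else. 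No step beyond this requires more than citing the statements already in place.
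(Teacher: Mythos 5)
Your proposal is correct and follows essentially the same route as the paper: contractibility of $X_s$ via Ehresmann, the shift isomorphism from the long exact sequence of the pair, retraction of $D_\delta$ onto the graph $G$ followed by excision to split off the Picard--Lefschetz contributions $\ZZ^r$ in degree $n$, and the transversality-plus-excision identification of the remaining summand with $H_*(T,\calM)$. The final ``subtlety'' you raise is not actually an issue, since the statement already places $H_n(T,\calM)$ alongside $\ZZ^r$ in degree $n-1$, but this does not affect the argument.
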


By construction, $T$ is homotopic to $\Sigma_s$, which is the Milnor fibre of $\Sigma_0$. We will spend a large part of this paper computing the homology of $\calM$. 

\section{The Milnor fibre of the $D(k,p)$ singularity}

We collect and reprove the following proposition, which follows from~\cite{FdB3}~and~\cite{Ga}.
\label{secdkp}
\begin{prop}
\label{milnordkp}
The Milnor fibre of the $D(k,p)$ singularity in $\CC^n$ is homotopy-equivalent to the sphere $\SSS^{n+p-k-1}$.
\end{prop}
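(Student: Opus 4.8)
The plan is to compute the Milnor fibre of $D(k,p)$ directly from its normal form
\[\sum_{1\leq i\leq j\leq p}x_{i,j}y_iy_j+\sum_{p+1\leq i\leq n-k}y_i^2,\]
viewed as a function on $\CC^n$ with coordinates split as the $x_{i,j}$ (there are $p(p+1)/2$ of them), the $y_i$ for $1\le i\le n-k$, and $k$ further coordinates $z_1,\dots,z_k$ that do not appear in the equation at all. The presence of those $k$ free variables means the germ is, up to a linear change of coordinates, a function pulled back from $\CC^{n-k}$; since adding $r$ dummy variables to a germ $g$ turns its Milnor fibre $F_g$ into $F_g\times\CC^r$ (which is homotopy equivalent to $F_g$), I would first reduce to the case $k=0$, i.e. to understanding the germ
\[q(x_{i,j},y_i)=\sum_{1\leq i\leq j\leq p}x_{i,j}y_iy_j+\sum_{i=p+1}^{m}y_i^2\]
on $\CC^{m+p(p+1)/2}$ where $m=n-k$, whose Milnor fibre we want to show is homotopy equivalent to $\SSS^{m+p-1}=\SSS^{n+p-k-1}$.

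Next I would exploit the linearity of $q$ in the $x_{i,j}$. Writing $Q(y)$ for the symmetric $p\times p$ matrix $(y_iy_j)$ and $X$ for the symmetric matrix $(x_{i,j})$, we have $q = \langle X, Q(y)\rangle + \sum_{i>p} y_i^2$, which is an affine-linear function of $X$ for each fixed $y$. The plan is to slice the Milnor fibre $q^{-1}(1)$ (intersected with a small ball, or equivalently — since $q$ is weighted-homogeneous — the global fibre, which is homotopy equivalent) by the projection $\pi$ to the $y$-coordinates. Over a fixed $y$ with $Q(y)\ne 0$, i.e. $y\not\in$ (the locus $y_1=\dots=y_p=0$), the fibre $\pi^{-1}(y)\cap q^{-1}(1)$ is an affine subspace of the $X$-space (one linear equation $\langle X,Q(y)\rangle = 1 - \sum_{i>p}y_i^2$, which is nontrivial because $Q(y)\ne0$), hence contractible; over $y$ with $y_1=\dots=y_p=0$ the fibre of $q^{-1}(1)$ is empty unless $\sum_{i>p}y_i^2=1$, in which case it is the whole $X$-space. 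So $q^{-1}(1)$ deformation retracts onto a space fibred with contractible fibres over $\{y : y_1=\dots=y_p\ \text{not all zero}\} = \CC^m \setminus (\{0\}^p\times\CC^{m-p})$ together with a piece sitting over the smooth affine quadric $\{(0,\dots,0,y_{p+1},\dots,y_m): \sum y_i^2 = 1\}$. I would make this precise by a Leray–Hirsch / homotopy-fibration argument: the open part of $q^{-1}(1)$ lying over $\CC^m\setminus(\CC^p\!\text{-axis locus})$ is homotopy equivalent to $\CC^m\setminus(\{0\}^p\times\CC^{m-p})$, which deformation retracts onto $\CC^p\setminus\{0\} \simeq \SSS^{2p-1}$; and then I would analyze how the quadric piece $\{\sum_{i>p}y_i^2=1\}\times(X\text{-space})\simeq \SSS^{m-p-1}$ is glued in along the "boundary at infinity" of the $y_1,\dots,y_p$ directions to produce, via a join-type construction, a single sphere.

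The cleaner route, which I would actually carry out, is to recognize $q$ as (equivalent to) a well-understood family. After the reduction above and after absorbing $\sum_{i>p}y_i^2$ by a further suspension argument in reverse, the essential germ is $\sum_{1\le i\le j\le p} x_{i,j} y_i y_j$ on $\CC^{p}_y\times\CC^{p(p+1)/2}_X$; but this is exactly $p$-th summand of, or rather a coordinatization of, a function whose Milnor fibre can be computed by the same $\pi$-to-$y$ fibration: over $y\ne 0$ the $X$-fibre is an affine hyperplane (contractible), over $y=0$ it is empty, so the Milnor fibre is homotopy equivalent to $\CC^p\setminus\{0\}\simeq \SSS^{2p-1}$. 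Re-suspending by the $(m-p)$ square terms $y_{p+1},\dots,y_m$ multiplies the homotopy type by a join with $\SSS^{2(m-p)-1}$... wait — suspension by a single square term $+y^2$ is a genuine Thom–Sebastiani join $F_q * \SSS^0$, shifting the sphere dimension up by one. Doing this $m-p$ times sends $\SSS^{2p-1}\rightsquigarrow \SSS^{2p-1+(m-p)} = \SSS^{m+p-1}$, which after re-adding the $k$ dummy variables is $\SSS^{n+p-k-1}$. I expect the main obstacle to be making the fibration-over-$y$ argument rigorous near the bad locus $Q(y)=0$ — i.e. justifying the deformation retraction and identifying precisely how the two pieces glue — since the naive projection is not proper and the fibre type jumps; the Thom–Sebastiani formulation sidesteps most of this, so I would lean on the join/suspension formula (as in \cite{FdB3}, \cite{Ga}) for the square terms and only handle $\sum_{1\le i\le j\le p}x_{i,j}y_iy_j$ by the bare-hands fibration argument, where the jump locus $\{y=0\}$ contributes an empty fibre and so causes no gluing subtlety at all.
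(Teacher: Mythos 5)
Your ``cleaner route'' is exactly the paper's argument: pass to the global affine fibre by quasi-homogeneity, peel off the square terms as Thom--Sebastiani suspensions, and compute the Milnor fibre of $\sum_{1\le i\le j\le p}x_{i,j}y_iy_j$ by projecting to the $y$-coordinates, a locally trivial fibration over $\CC^p\setminus\{0\}$ with contractible affine-hyperplane fibres, giving $\SSS^{2p-1}$ and hence $\SSS^{n+p-k-1}$ after the $n-k-p$ suspensions. The only slip is the count of dummy coordinates --- there are $k-p(p+1)/2$ of them, not $k$, since the forms $x_{i,j}$ and $y_i$ together number $p(p+1)/2+(n-k)$ --- but this does not affect your reduced germ, its ambient dimension $(n-k)+p(p+1)/2$, or the final answer.
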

\begin{proof}
Since the $D(k,p)$ in $\CC^n$ singularity is quasi-homogeneous its Milnor fibre is diffeomophic to the global hypersurface $X\subset\CC^n$ defined by
\[\sum_{1\leq i\leq j\leq p}x_{i,j}y_iy_j+\sum_{p+1\leq i\leq n-k}y_i^2=1,\]
where $\{x_{i,j}\}_{1\leq i\leq j\leq p}\cup\{y_i\}_{1\leq i\leq n-k}$ is an independent system of linear forms in $\CC^n$.
Hence $X$ is homotopic to the $n-p-k$-suspension of the hypersurface $Y\subset\CC^{\frac{p^2+3p}{2}}$ defined by
\[\sum_{1\leq i\leq j\leq p}x_{i,j}y_iy_j=1.\]
The projection 
\[\sigma:Y\to\CC^k\setminus\{O\}\]
defined by $\sigma(x_{i,j},y_l):=y_l$ is a locally trivial fibration with fibre an affine hyperplane in $\CC^{\frac{p^2+p}{2}}$, and hence contractible.
We conclude that $Y$ is homotopy equivalent to the unit sphere $\SSS^{2k-1}$ in $\CC^k$. Consequently $X$ is homotopic to the sphere $\SSS^{n+k-p-1}$.
\end{proof}

\begin{lema}
\label{ecuador}
Given the $D(1,1)$ singularity
\[f:=x_1y_2^2+y_3^2...+y_n^2:(\CC^n,O)\to\CC\]
in $\CC^n$ its restriction
\[f|_{H_1}:H_1\to\CC\] 
to the hyperplane $H_1$ defined by $x_1=1$ is a Morse singularity at the origin. The pair of Milnor fibres $(f^{-1}(t),(f|_{H_1})^{-1}(t))$ is homotopy equivalent 
to the pair $(\SSS^{n-1},\SSS^{n-2})$ with $\SSS^{n-2}$ embedded in $\SSS^{n-1}$ as the equator.
\end{lema}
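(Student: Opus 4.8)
The plan is to exhibit both Milnor fibres explicitly as quasi-homogeneous affine hypersurfaces and then identify the inclusion. Since $f=x_1y_2^2+y_3^2+\cdots+y_n^2$ is quasi-homogeneous, its Milnor fibre is diffeomorphic to the global affine hypersurface $X=\{x_1y_2^2+y_3^2+\cdots+y_n^2=1\}\subset\CC^n$, and by Proposition~\ref{milnordkp} (the case $k=1$, $p=1$) we know $X\simeq\SSS^{n-1}$. Similarly $f|_{H_1}=y_2^2+y_3^2+\cdots+y_n^2$ is a nondegenerate quadratic form in the $n-1$ coordinates $(y_2,\dots,y_n)$ on $H_1\cong\CC^{n-1}$, so it is a Morse singularity at the origin and its Milnor fibre is diffeomorphic to the affine quadric $Q=\{y_2^2+\cdots+y_n^2=1\}\subset\CC^{n-1}$, which is the standard model homotopy equivalent to $\SSS^{n-2}$ (it deformation retracts onto the real sphere $\{\mathrm{Re}\,y=\text{unit vector},\ \mathrm{Im}\,y=0\}$).

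Next I would realise the pair compatibly. Take the same representative: $(f^{-1}(t),(f|_{H_1})^{-1}(t))$ with $t=1$ is, up to the diffeomorphisms above, the pair $(X,Q)$ where $Q=X\cap\{x_1=1\}$. So it suffices to understand the inclusion $X\cap\{x_1=1\}\hookrightarrow X$ up to homotopy. Following the proof of Proposition~\ref{milnordkp}, $X$ is the $(n-2)$-fold suspension (suspension coordinates $y_3,\dots,y_n$) of the curve $Y=\{x_1y_2^2=1\}\subset\CC^2$, and the projection $\sigma\colon Y\to\CC\setminus\{O\}$, $(x_1,y_2)\mapsto y_2$, is a fibration with contractible (affine-line) fibres, hence $Y\simeq\SSS^1$ via $y_2\mapsto y_2/|y_2|$. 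Under this, the locus $\{x_1=1\}\cap Y=\{y_2^2=1\}=\{y_2=\pm1\}$ is two points, i.e. $\SSS^0\subset\SSS^1$ sitting as the "equator" (two antipodal points). Suspending $n-2$ times turns $\SSS^0\subset\SSS^1$ into $\SSS^{n-2}\subset\SSS^{n-1}$ as the equator, since the suspension of an equatorial inclusion is again an equatorial inclusion. This gives the claimed homotopy equivalence of pairs.

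The one point that needs care — and which I expect to be the main obstacle — is checking that the inclusion $X\cap\{x_1=1\}\hookrightarrow X$ is genuinely compatible with the suspension-and-fibration homotopy equivalences, i.e. that the retraction $X\to(\text{suspension of }Y)$ and $Y\to\SSS^1$ can be chosen to carry the slice $\{x_1=1\}$ to the equator rather than just to \emph{some} $\SSS^{n-2}$. Concretely: the deformation retraction in the proof of Proposition~\ref{milnordkp} collapses the affine-line fibres of $\sigma$ to their intersection with a suitable section; one must choose this section so that over $y_2=\pm1$ it passes through $x_1=1$, e.g. retract each fibre $\{x_1=1/y_2^2\}$ (a single point here, since $Y$ is a graph) — in fact for this particular $Y$ the map $\sigma$ is a bijection onto $\CC\setminus\{O\}$, so the compatibility is automatic: $\{x_1=1\}\cap Y$ maps exactly to $\{y_2=\pm1\}\subset\CC\setminus\{O\}$, which retracts onto $\{\pm1\}\subset\SSS^1$, the equator. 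The suspension step is then formal. An alternative, perhaps cleaner, route avoiding suspensions entirely is to note that the normalised projection $X\to\SSS^{n-1}$ given by $(x_1,y_2,\dots,y_n)\mapsto(\mathrm{Re}(y_2,\dots,y_n),\ldots)$ after a fibrewise retraction sends $X\cap\{x_1=1\}$ into the hyperplane section; I would present whichever of the two makes the "equator" claim most transparent, and I lean toward the explicit suspension description since the paper has already set it up.
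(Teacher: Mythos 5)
Your argument is correct and follows essentially the same route as the paper: reduce by suspension to the curve $\{x_1y_2^2=1\}\subset\CC^2$, identify it with $\SSS^1$, and observe that the slice $x_1=1$ gives two antipodal points, i.e.\ the equator $\SSS^0\subset\SSS^1$. The only cosmetic difference is that the paper realises the curve as the double cover of $\CC\setminus\{O\}$ via projection to $x_1$ and views the restricted Milnor fibre as a fibre of that cover, while you use the graph projection to $y_2$; the two descriptions of the pair $(\SSS^1,\SSS^0)$ coincide.
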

\begin{proof}
By suspension it is enough to consider $n=2$, and in this case it is obvious, since the Milnor fibre of $x_1x_2^2$ projects to the double cover of $\CC\setminus\{O\}$
by the projection $pr(x,y)=x$ and the Milnor fibre of $f|_{H_1}$ is a fibre of this projection.
\end{proof}

Consider the $D(3,2)$ singularity
\[f:=x_1y_1^2+2x_2y_1y_2+x_3y_2^2+y_3^2+...+y_{n-3}^2:(\CC^n,O)\to\CC\]
in $\CC^n$.

Recall from Notation~\ref{notaciones} the definition of $\Sigma[i]$. As here we are considering no unfolding we have the equality $\Sigma=\Sigma_0$.
The restriction of $f$ to any $(n-3)$-dimensional transversal to $\Sigma[0]$ is a Morse type singularity in $\CC^{n-3}$.
The restriction of $f$ to any $(n-2)$-dimensional transversal to $\Sigma[1]$ is a $D(1,1)$ singularity in $\CC^{n-2}$.

The stratum $\Sigma[1]$ is equal to $V(det(H(f))\setminus\{0\}$ and hence is homotopic to its link $L_\epsilon:=\Sigma[1]\cap\SSS_\epsilon$ at the origin. 
Since the singularity is homogeneous we can take $\epsilon=1$ and denote $L_\epsilon$ by $L$. This link is diffeomorphic to 
$\RR\PP^3$, since the surface $\Sigma[1]$ is defined by $det(H(f))(x_1,x_2,x_3)=x_1x_3-x_2^2=0$ in $\Sigma\cong\CC^3$. 
Hence its fundamental group is isomorphic to $\ZZ_2$.

Let 
\[\kappa:N(\CC^n,\Sigma)\to \Sigma\]
\[\kappa_1:N(\CC^n,\Sigma[1])\to\Sigma[1]\]
be the holomorphic normal bundles of $\Sigma$ and $\Sigma[1]$ in $\CC^n$ respectively. We have the inclussion of restrictions
\begin{equation}
\label{normalbund}
\iota:N(\CC^n,\Sigma)|_{\Sigma[1]}\hookrightarrow N(\CC^n,\Sigma[1]),
\end{equation}
compatible with the bundle maps.

Observe that, since in this case $\Sigma$ is a $3$-dimensional coordinate subspace, the first bundle is 
trivial with fibre $\CC^{n-3}$. Notice that, as $L$ is compact,
there is a positive $\rho$ such that the $\rho$-neighbourhood of the zero section of the restriction 
\[\kappa_1|_{L}:N(\CC^n,\Sigma[1])_{L}\to L\]
embedds in $\CC^n$ holomorphically on each fibre. We denote by 
\[\kappa^{\rho}_1:N^\rho(\CC^n,\Sigma[1])|_{L}\to L\]
the embedded $\rho$-neighbourhood; its fibre is a complex $(n-2)$-dimensional ball.

For any $y\in L$ the restriction of $f$ to the fibre of the embedded normal bundle
\[f|_{N^\rho(\CC^n,\Sigma[1])_y}:(N^\rho(\CC^n,\Sigma[1])_y,y)\to\CC\]
is a $D(1,1)$ singularity with critical set the line $Crit(y):=N^\rho(\CC^n,\Sigma[1])_y\cap\Sigma$. 

Since the restriction of the function $det(H(f))$ to $Crit(y)$ is non-singular at the point $y$ for any $y\in L$, 
for $u$ small enough the intersection $Crit(y)\cap det(H(f))^{-1}(u)$ is a unique point for any
$y\in L$, and hence 
\[\Xi_u:=det(H(f))^{-1}(u)\cap\Sigma\cap N^\rho(\CC^n,\Sigma[1])|_L\] 
defines a cross-section of the embedded normal bundle, and the restriction
\[\kappa_1^{\rho}|_{\Xi_u}:\Xi_u\to L\]
is a diffeomorphism.

Let 
\[\kappa^{\rho}:N^\rho(\CC^n,\Sigma)|_{\Xi_u}\to\Xi_u\]
be an holomorphic embedding of a $\rho$-neighbourhood of the zero section of the restriction to $\Xi_u$ of the normal bundle of
$\Sigma$ in $\CC^n$. It is a trivial bundle with fibre a complex $n-3$ dimensional ball. 
For any $x\in\Xi_u$ the restriction of $f$ to the fibre $(N^\rho(\CC^n,\Sigma)|_{\Xi_u})_x$ is a Morse type singularity.

If $\rho$ and $u$ are chosen small enough we may assume that, for any $x\in\Xi_u$, we have an inclussion of fibres
\[(N^\rho(\CC^n,\Sigma)|_{\Xi_u})_x\subset (N(\CC^n,\Sigma[1])_{L})_{\kappa_1^{\rho}|_{\Xi_u}(x)}.\] 

Now we study the restrictions of the bundle maps $\kappa^{\rho}$ and $\kappa^\rho_1$ to a fibre of $f^{-1}(\delta)$ for small $\delta$. Define
\[\alpha:=\kappa_1^\rho|_{f^{-1}(\delta)\cap N^\rho(\CC^n,\Sigma[1])|_{L}}:f^{-1}(\delta)\cap N^\rho(\CC^n,\Sigma[1])|_{L}\to L,\]
\[\beta:=\kappa_1^\rho|_{\Xi_u}\comp\kappa^\rho|_{f^{-1}(\delta)\cap N^\rho(\CC^n,\Sigma)|_{\Xi_u}}:f^{-1}(\delta)\cap N^\rho(\CC^n,\Sigma)|_{\Xi_u}\to L.\]
We have 

\begin{lema}
\label{monodromiaecuador}
For $\delta$ small enough the mapping
\[(\alpha,\beta):(f^{-1}(\delta)\cap N^\rho(\CC^n,\Sigma[1])|_{L},f^{-1}(\delta)\cap N^\rho(\CC^n,\Sigma)|_{\Xi_u}\to L\]
is a locally trivial fibration of pairs with fibre homotopic to 
$(\SSS^{n-3},\SSS^{n-4})$, with $\SSS^{n-4}$ embedded in $\SSS^{n-3}$ as an equator 
and whose monodromy is isotopic to the identity in $\SSS^{n-4}$ and is the reflection over the equator in $\SSS^{n-3}$.
\end{lema}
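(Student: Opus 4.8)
The strategy is to reduce everything to the local model of the $D(3,2)$ singularity and to exploit the product structure of the two nested normal bundles. First I would trivialize the situation over $L\cong\RR\PP^3$ as much as possible. Over a contractible chart $U\subset L$, both $N^\rho(\CC^n,\Sigma[1])|_U$ and the sub-bundle $N^\rho(\CC^n,\Sigma)|_{\Xi_u\cap\kappa_1^{-1}(U)}$ are trivial; using a family of local analytic coordinates adapted to the $D(3,2)$ normal form one writes $f$, restricted to a fibre of $\kappa_1^\rho$ over a point $y\in U$, as a $D(1,1)$ singularity in the $(n-2)$-dimensional ball, with its critical line $Crit(y)$ and with the further normal slice to $\Sigma$ carrying a Morse singularity. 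So over $U$ the pair in question is, fibrewise in $y$, the pair (Milnor fibre of a $D(1,1)$ in $\CC^{n-2}$, Milnor fibre of a Morse point in $\CC^{n-3}$ sitting inside it). That this is a locally trivial fibration of pairs over $U$ is then just Ehresmann applied to the family, transversality of $f^{-1}(\delta)$ to the relevant boundaries having been arranged by the choice of $\rho,u,\delta$; patching over the charts of $L$ gives local triviality over all of $L$.

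Next I would identify the fibre. By Lemma~\ref{ecuador}, for a single $D(1,1)$ singularity $x_1y_2^2+y_3^2+\dots+y_n^2$ the pair of Milnor fibres $(f^{-1}(t),(f|_{H_1})^{-1}(t))$ is $(\SSS^{n-1},\SSS^{n-2})$ with the sphere embedded as the equator. Here the fibre of $\kappa_1^\rho$ is an $(n-2)$-ball and the $D(1,1)$ lives in dimension $n-2$, so the big fibre is $\SSS^{n-3}$; the slice $\Xi_u$ is exactly a choice of $\{x_1=\text{const}\neq 0\}$ inside $Crit(y)$ (this is what the section $\Xi_u=\det(H(f))^{-1}(u)\cap\Sigma$ does), and the further Morse normal slice recovers precisely the role of $H_1$ in Lemma~\ref{ecuador}. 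Hence the fibre of $(\alpha,\beta)$ is $(\SSS^{n-3},\SSS^{n-4})$ with $\SSS^{n-4}$ the equator, as claimed.

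The remaining and genuinely delicate point is the monodromy, i.e. the effect of going once around the nontrivial loop in $\pi_1(L)=\ZZ_2$. The idea is that $L$ sits inside $\Sigma\cong\CC^3$ as the link of the conic $\det(H(f))=x_1x_3-x_2^2=0$, and the nontrivial loop is covered, in the obvious double cover $\CC^2\setminus\{0\}\to\Sigma[1]$ given by $(s,t)\mapsto(s^2,st,t^2)$, by a path from $(s,t)$ to $(-s,-t)$. Tracking the adapted coordinates along this loop: the local $D(1,1)$ coordinate $y_2$ (whose square appears) and the transverse Morse coordinates are the entries of the relevant eigenvector/kernel data of $H(f)$ along $Crit(y)$, and a half-turn in the $(s,t)$-parameter sends this eigenvector $v$ to $-v$. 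Squares are unaffected, so on the fibre $\SSS^{n-3}$ (the $D(1,1)$ Milnor fibre, which only sees $y_2^2$ and the $y_i^2$) this acts as the reflection that fixes the equator $\{y_2=0\}$ and flips the $y_2$-coordinate — exactly ``reflection over the equator in $\SSS^{n-3}$'' — while on $\Xi_u$, parametrized by a choice of point on $Crit(y)$ where $\det(H(f))=u$, the monodromy is isotopic to the identity because that slice is cut out by the well-defined function $\det(H(f))$, which is invariant under the deck transformation. The hard part will be making this eigenvector-tracking argument precise, i.e. genuinely exhibiting an analytic trivialization of the family of $D(1,1)$'s over the universal (double) cover of $L$ and computing that the descent datum over the nontrivial loop is the stated reflection; once the local $D(3,2)$ normal form is put in a form where the corank-$1$ kernel direction along $\Sigma[1]$ varies holomorphically over the double cover, this becomes a short explicit check, and the identity behaviour on $\Xi_u$ (hence on the equator $\SSS^{n-4}$) follows formally from invariance of $\det(H(f))$.
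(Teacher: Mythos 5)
Your setup (local triviality via the $D(3,2)$ normal form and Ehresmann, identification of the fibre via Lemma~\ref{ecuador}) matches the paper, and your instinct to compute the monodromy by tracking the eigen-decomposition of $H(f)$ over the double cover $(s,t)\mapsto(s^2,st,t^2)$ of $\Sigma[1]$ is also the right one: the paper does exactly this, with the explicit generator $\gamma(\theta)=(0,0,e^{2\pi i\theta})$ of $\pi_1(L)$, explicit models $(X_\theta,Y_\theta)$ of the pair of fibres, and the explicit trivialisation $\varphi_\theta(x_1,x_2,x_3,y_1,y_2,y_3,\ldots)=(e^{-2\pi i\theta}x_1,x_2,x_3,e^{\pi i\theta}y_1,e^{-\pi i\theta}y_2,y_3,\ldots)$, whose time-one map negates both $y_1$ and $y_2$.

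However, the step you yourself flag as the hard part is sketched incorrectly, and carried out as written it would give the wrong monodromy. Writing $H(f)=vv^t$ with $v=(s,t)$, so that $f=(sy_1+ty_2)^2+y_3^2+\cdots$ on $\Sigma[1]$, the deck transformation $v\mapsto -v$ negates \emph{both} the image coordinate $w=sy_1+ty_2$ and the kernel coordinate $z=-ty_1+sy_2$; your sketch records only the flip of the coordinate whose square carries the variable coefficient, i.e. the kernel direction. But negating the kernel coordinate alone is isotopic to the identity on the $D(1,1)$ fibre: on the core circle $\{x_1z^2=\delta\}\cong\CC^*_z$ it is the rotation $z\mapsto -z$, of degree $+1$. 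The orientation reversal of $\SSS^{n-3}$ comes instead from the nondegenerate eigendirection: its eigenvalue $x_3=e^{2\pi i\theta}$ winds once around $0$ along the loop, forcing the half-winding $w\mapsto e^{-\pi i\theta}w$ that ends at $-w$, and $w\mapsto -w$ is the suspension flip, of degree $-1$. Relatedly, the equator $\SSS^{n-4}$ is not $\{y_2=0\}$ (that locus meets the fibre in the wrong dimension); it is the Morse fibre over the marked point $\Xi_u$ of $Crit(y)$, i.e. the slice $\{x_1=\mathrm{const}\}$ exactly as in Lemma~\ref{ecuador}. Finally, invariance of $\det(H(f))$ only shows that the section $\Xi_u$, hence the base point of the Morse fibre, is preserved by the monodromy; it does not formally give the identity on $\SSS^{n-4}$. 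One still has to check that the induced map on the Morse fibre, which negates the two complex coordinates $w$ and $z$, acts on the real sphere $\SSS^{n-4}$ through the connected group $SO(n-3)$ and is therefore isotopic to the identity. With these corrections your outline becomes the paper's proof.
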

\begin{proof}
The statement of the homotopy type of the fibre is just Lemma~\ref{ecuador}. 

The circle $\gamma(\theta):=(0,0,e^{2\pi i\theta})$ parametrises a generator of the fundamental group of $L$.
The normal bundle $N^\rho(\CC^n,\Sigma[1])|_{L}$ can be choosen so that for any $\theta$ the line 
$Crit(\gamma(\theta))$ is equal to $V(x_2,x_3-e^{2\pi i\theta},y_1,...,y_{n-3})$ and 
the cross-section $\Xi_u$ is defined by $\Xi_u(\gamma(\theta))=(u e^{-2\pi i\theta},0,e^{2\pi i\theta},0...,0)$.

For any $\theta\in [0,2\pi]$ the pair of fibres $(\alpha^{-1}(\gamma(\theta)),\beta^{-1}(\gamma(\theta))$ is homotopic to the pair of varieties $(X_\theta,Y_\theta)$ 
defined by
\[X_\theta:=V(x_2,x_1y_1^2+e^{2\pi i\theta}y_2^2+y_3^2+\cdots+y_{n-3}^2),\]
\[Y_\theta:=V(x_2,ue^{-2\pi i\theta}y_1^2+e^{2\pi i\theta}y_2^2+y_3^2+\cdots+y_{n-3}^2,x_1-ue^{-2\pi i\theta}).\]
The family of diffeomorphism
\[\varphi_\theta:\CC^n\to\CC^n\]
defined by 
\[\varphi_\theta(x_1,x_2,x_3,y_1,\ldots,y_{n-3}):=(e^{-2\pi i\theta}x_1,x_2,x_3,e^{\pi i\theta}y_1,e^{-\pi i\theta}y_2,y_3,\ldots,y_{n-3})\]
induces a diffeomorphism from $(X_0,Y_0)$ to $X_\theta,Y_\theta)$ for any $\theta\in [0,2\pi]$. Therefore a geometric monodromy is given by 
\[\varphi_1:(X_0,Y_0)\to (X_1,Y_1)=(X_0,Y_0).\]

The pair $(X_0,Y_0)$ is homotopic to $(\SSS^{n-3},\SSS^{n-4})$ and it is easy to chech that $\varphi_1$ preserves the orientation in $\SSS^{n-4}$ and reverses it
in $\SSS^{n-3}$.
\end{proof}

\section{A decomposition of $\calM$}
\label{sectiondecomp}

In Proposition~\ref{homsplit} it has become clear that in order to understand the homology of the Milnor fibre of $f$ we need to compute the homology of the 
intersection $\calM=f_s^{-1}(t_0)\cap T$, with $s\in S\setminus\Delta$ small enough and $t_0\neq 0$ small enough. 
The tubular neighbourhood $T$ is the total space of a trivial fibration
\begin{equation}
\label{tubneighb}
\pi:T\to\Sigma_s
\end{equation}
with fibre a $(n-3)$-complex dimensional ball. If $B$ is a subspace of $\Sigma_s$ we denote $\pi^{-1}(B)$ by $T_B$.

By Theorem~\ref{teounfolding1}, for a generic parameter $s$ close to the origin of the base $S$ of the unfolding of $f$, the maximal corank of 
$H(f_s)(x)$ is two for any $x\in\Sigma_s$. Recall that the set of points where the corank is at least $1$ is the surface $\overline{\Sigma[1]_s}$ defined
by the vanishing of $det(H(f_s))$. 
The singular points $\Sigma[2]_s=\{p_1,\ldots,p_a\}$ of $\overline{\Sigma[1]_s}$
are of Morse type and coincide precisely with the points where the corank of $H(f_s)$ equals $2$. 

For each point $p_i$ let $B_i(\zeta)$ be a ball of radius $\zeta$ around $p_i$ in $\CC^n$ such that $f_s|_{B_i(\zeta)}$ is biholomorphic to 
the restriction of the singularity $D(3,2)$ to the unit ball of $\CC^n$. Taking $\zeta$ small enough we can assume that the balls are mutually 
disjoint and that the intersections $A_i(\zeta):=B_i(\zeta)\cap\Sigma_s$ are balls in
$\Sigma_s$ centered in each of the points $p_i$. Taking $T$, $\zeta$, $s$, and $t_0$ small enough the space
\begin{equation}
\label{calA_i}
\calA_i:=f_s^{-1}(t_0)\cap \pi^{-1}(A_i(\zeta))=\calM\cap\pi^{-1}(A_i(\zeta))
\end{equation}
is diffeomeorphic to the Milnor fibre of $f_s$ at $p_i$ for any $i$, and hence homotopy equivalent to $\SSS^{n-2}$.

We take $\zeta$ small enough so that $\partial A_i(\zeta')$ is transverse to $\overline{\Sigma[1]_s}$ for any $0<\zeta'<\zeta$. 
We choose $\zeta_0<\zeta$ sufficiently close to $\zeta$ so that the inclussion
\[\calM\cap \pi^{-1}(A_i(\zeta'))\subset \calA_i\]
is a homotopy equivalence for any $\zeta_0\leq \zeta'\leq \zeta$.

Choose $\xi>0$ small enough so that $det(H(f_s))^{-1}(u)$ meets $\partial A_i(\zeta')$ transversely for any $u\in D_\xi$ and $\zeta_0\leq \zeta'\leq \zeta$. 
Define
\[B:=det(H(f_s))^{-1}(D_\xi)\setminus(\cup\dot{A}_i(\zeta_0)),\]
\[B_u:=det(H(f_s))^{-1}(u)\setminus(\cup\dot{A}_i(\zeta_0)).\]

A schematic picture of this decomposition can be seen in Figure~\ref{figura2}. 

\begin{figure}[h]
\setlength{\unitlength}{0.0006249975in}
\begin{picture}(7297,6564)
\put(6090,6168){$B_0$}
\put(-200,2568){$B$}
\put(2615,2668){$A_i(\zeta_0)$}
\put(6090,2493){$A_i(\zeta)$}
\includegraphics[scale=0.75]{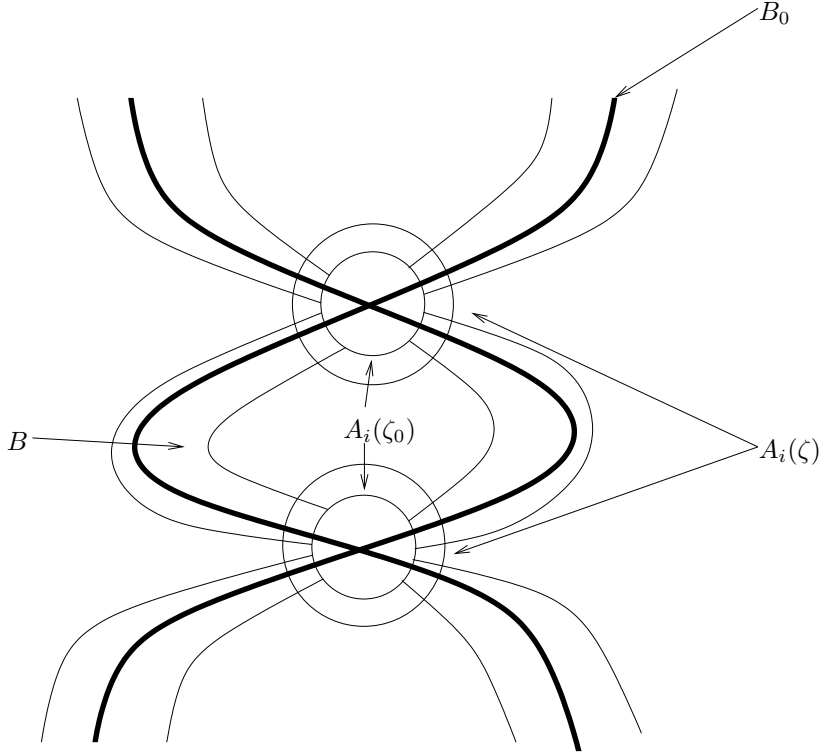}
\end{picture}
\caption{\label{figura2}The decomposition of $\Sigma_s$}
\end{figure}

The space $B$ is a tubular neighbourhood of $B_0$ in $\Sigma_s\setminus(\cup\dot{A}_i(\zeta_0))$. The mapping
\[det(H(f_s)):B\to D_\xi\]
is a trivial fibration. Therefore there is a product structure $B\cong B_0\times D_\xi$ and the projection 
\[\rho:B\to B_0\]
to the first factor induces a diffeomorphism 
\[\sigma_u:B_u\to B_0\]
for any $u$.

The restriction
\[\rho\comp\pi|_{T_B}:T_B\to B_0\] 
is a locally trivial fibration with fibre a polycylinder of complex dimension $n-2$. Define
\[\calB:=\calM\cap T_B,\]
the piece of $\calM$ falling over $B$.
Taking $T$, $\xi$ and $t_0$ sufficiently small we have that the restriction
\begin{equation}
\label{fibracsn-3}
\rho\comp\pi|_{\calB}:\calB\to B_0
\end{equation}
is a locally trivial fibration with fibre diffeomorphic to the Milnor fibre of the 
$D(1,1)$ in $\CC^{n-1}$, and hence homotopy equivalent to $\SSS^{n-3}$.

For any $\xi'>0$ we define 
\[U_{\xi'}:=\Sigma_s\setminus det(H(f_s))^{-1}(\dot{D}_{\xi'}),\]
\[\calU_{\xi'}:=\pi^{-1}(U_{\xi'})\cap\calM,\]
the complement of a tube around $\overline{\Sigma[1]_s}$ in $\Sigma_s$, and the piece of $\calM$ lying over it. 
For $T$ and $t_0$ small enough the restiction
\[\pi|_{\calU_{\xi'}}:\calU_{\xi'}\to U_{\xi'}\]
is a locally trivial fibration with fibre diffeomorphic to the Milnor fibre of the Morse singularity in $\CC^{n-3}$, and hence homotopic to $\SSS^{n-4}$.

We fix a positive $\xi_0$ smaller and close to $\xi$ and define:
\[U:=U_{\xi_0},\]
\[\calU:=\calU_{\xi_0}.\]
The restriction
\begin{equation}
\label{calU}
\pi|_{\calU}:\calU\to U
\end{equation}
is locally trivial with fibre homotopic to $\SSS^{n-4}$.

Fix a point $u$ in $\partial D_{\xi_0}$. Define
\[\calB_{u}:=\pi^{-1}(B_u)\cap\calM.\]
The mapping 
\[\rho|_{B_u}:B_u\to B_0\]
is a diffeomorphism. Hence the mapping
\begin{equation}
\label{fibpar}
((\rho|_{B_u})^{-1}\comp\rho\comp\pi|_{\calB},\pi|_{\calB_u}):(\calB,\calB_u)\to B_u
\end{equation}
is a locally trivial fibration of pairs with fibre the pair $(\SSS^{n-3},\SSS^{n-4})$, being $\SSS^{n-4}$ embedded as an equator of $\SSS^{n-3}$.

\section{The topology of $B_0$}

\subsection{The fundamental group of $B_0$}

The space $SM(k)$ of symmetric matrices of size $k$ with complex coefficients is a complex vector space of dimension $k(k+1)/2$.
The smooth locally closed algebraic subset $SM(k,l)$ has codimension
$l(l+1)/2$, and we have seen that its Zariski closure $\overline{SM(k,l)}$ is defined by the vanishing of all $l\times l$ minors.
It is easy to check that $\overline{SM(k,l)}$ is far to be, in general, a complete intersection.

Define $MM(k\times (k-1))$ to be the set of $(k\times (k-1))$ matrices of maximal rank. 
\begin{lema}
 \label{piunofacil}
The fundamental group of $MM(k\times (k-1))$ is trivial.
\end{lema}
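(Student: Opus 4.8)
The plan is to exhibit $MM(k\times(k-1))$ as the total space of a fibre bundle with simply connected base and simply connected fibre, and then invoke the homotopy exact sequence of the fibration. First I would fix the surjective map that sends a matrix $A\in MM(k\times(k-1))$ to the image subspace $\operatorname{Im}(A)\subset\CC^k$, which is a $(k-1)$-dimensional linear subspace since $A$ has maximal rank. This defines a map
\[
p:MM(k\times(k-1))\to \mathrm{Gr}(k-1,k)\cong\PP^{k-1},
\]
and I claim $p$ is a locally trivial fibration: locally over the Grassmannian one can choose a smoothly varying complement, which trivialises the bundle of injective maps $\CC^{k-1}\hookrightarrow\CC^k$ with prescribed image. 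The fibre over a fixed $(k-1)$-plane $W$ is the set of linear isomorphisms $\CC^{k-1}\xrightarrow{\ \sim\ }W$, which is a copy of $GL(k-1,\CC)$.

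Next I would assemble the two known connectivity facts. The base $\PP^{k-1}$ is simply connected (it is a complex projective space, or one notes $\pi_1(\mathrm{Gr}(k-1,k))=0$). The fibre $GL(k-1,\CC)$ is connected and in fact $\pi_1(GL(k-1,\CC))\cong\ZZ$ is generated by the loop $\theta\mapsto\mathrm{diag}(e^{2\pi i\theta},1,\dots,1)$, coming from the determinant $GL(k-1,\CC)\to\CC^*$. The homotopy exact sequence
\[
\pi_1(GL(k-1,\CC))\to\pi_1(MM(k\times(k-1)))\to\pi_1(\PP^{k-1})=0
\]
shows $\pi_1(MM(k\times(k-1)))$ is a quotient of $\ZZ$, so it only remains to check that the generator of $\pi_1(GL(k-1,\CC))$ dies in $MM(k\times(k-1))$.

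For this last point the key step is to realise the determinant loop in $GL(k-1,\CC)$ as null-homotopic once it is allowed to move through all of $MM(k\times(k-1))$, not just through one fibre. Concretely, fixing the standard inclusion $\iota_0:\CC^{k-1}\hookrightarrow\CC^k$ onto the first $k-1$ coordinates, the loop $\theta\mapsto \iota_0\circ\mathrm{diag}(e^{2\pi i\theta},1,\dots,1)$ scales only the first basis vector $e_1$; but inside $MM(k\times(k-1))$ one may rotate the image of that first column away from $e_1$ and into the previously unused direction $e_k$, i.e. use the homotopy $(e_1\text{-column})\mapsto \cos(\pi t/2)\,e_1+\sin(\pi t/2)\,e_k$, keeping the other columns fixed; throughout this homotopy the $k\times(k-1)$ matrix stays of maximal rank, and at the endpoint the "rotating phase'' $e^{2\pi i\theta}$ multiplies the vector $e_k$, which is not in the span of the remaining columns, so the whole loop contracts to a point by shrinking that phase factor to $1$. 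Hence the connecting map kills the generator, $\pi_1(MM(k\times(k-1)))=0$, and the lemma follows.

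The main obstacle I anticipate is not the fibration structure (which is routine, essentially the statement that the Stiefel-type variety of maximal-rank maps fibres over the Grassmannian) but the explicit verification that the fibre's fundamental group maps to zero: one has to write down an honest null-homotopy inside $MM(k\times(k-1))$ using the "extra'' dimension in $\CC^k$, and check maximal rank is preserved at every stage. A cleaner alternative, if one prefers, is to argue by downward induction on $k$ using the column-evaluation fibration $MM(k\times(k-1))\to \CC^k\setminus\{0\}$, $A\mapsto(\text{last column of }A)$, whose fibre deformation-retracts onto $MM((k-1)\times(k-2))$-type data; the case $k=2$, where $MM(2\times 1)=\CC^2\setminus\{0\}$ is simply connected, starts the induction.
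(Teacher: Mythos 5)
Your argument is correct, but it takes a much longer road than the paper. The paper's proof is a one-liner: the locus of $k\times(k-1)$ matrices of non-maximal rank is an algebraic subset of complex codimension $(k-(k-2))\cdot((k-1)-(k-2))=2$ in the vector space of all $k\times(k-1)$ matrices, and the complement of a complex-codimension-$\ge 2$ analytic subset of $\CC^N$ is simply connected; done. Your route instead fibres $MM(k\times(k-1))$ over $\mathrm{Gr}(k-1,k)\cong\PP^{k-1}$ with fibre $GL(k-1,\CC)$ and then kills the generator of $\pi_1$ of the fibre by hand. That all works: the bundle structure is standard, the exact sequence does show $\pi_1$ is a quotient of $\ZZ$, and your rotation of the first column into the $e_k$-direction preserves maximal rank. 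The only imprecision is the final phrase ``shrinking that phase factor to $1$'': the homotopy $\theta\mapsto e^{2\pi i\theta(1-s)}e_k$ is not a homotopy through loops. The honest contraction is to fill the circle with the disc $z\mapsto z e_k+(1-|z|)e_1$ (columns $2,\dots,k-1$ fixed), which avoids $\mathrm{span}(e_2,\dots,e_{k-1})$ throughout — equivalently, the first column ranges over the complement of a complex-codimension-$2$ subspace, which is simply connected. Note that this last observation is exactly the paper's codimension argument in miniature, so your proof ultimately rests on the same fact; what the fibration picture buys you is more structure (it also computes higher homotopy of the Stiefel-type variety), at the cost of the explicit null-homotopy, whereas the paper's argument gets simple connectivity for free.
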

\begin{proof}
 The space of matrices $k\times (k-1)$ which are not of maximal rank is of codimension at least $2$ in a complex vector space.
\end{proof}

The mapping
\[\alpha_k:MM(k\times (k-1))\to SM(k,1)\] 
given by 
\[\alpha_k(M):=M M^t\]
is a locally trivial fibration (by homogeneity of the action of the general linear group). Denote by $F_k$ the fibre over the matrix $A=(a_{i,j})$,
where $a_{i,j}=\delta_{ij}$ unless $i=j=k$, in which case, $a_{k,k}=0$.

\begin{lema}
\label{fkcomponents} The fiber $F_k$ has two connected components. 
\end{lema}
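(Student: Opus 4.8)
The plan is to describe $F_k$ explicitly and count components by a linear-algebra argument. First I would observe that $F_k=\alpha_k^{-1}(A)$, where $A=MM^t$ has corank $1$, so the image of each $M\in F_k$, viewed as a linear map $\CC^{k-1}\to\CC^k$, is a fixed $(k-1)$-dimensional subspace $W$, namely the column span of $A$ (equivalently $W=\ker(v^t\cdot)$ for the null covector $v$ of $A$). Concretely, with $A$ as in the statement, $W$ is spanned by the first $k-1$ coordinate vectors. So every $M\in F_k$ factors as $M=\iota\circ N$ where $\iota\colon\CC^{k-1}\hookrightarrow\CC^k$ is the fixed inclusion onto $W$ and $N\in GL(k-1,\CC)$, and the condition $MM^t=A$ becomes $NN^t=A'$ where $A'=I_{k-1}$ is the identity on $W$. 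Hence $F_k$ is identified with the set $O(k-1,\CC)=\{N\in GL(k-1,\CC):NN^t=I_{k-1}\}$, the complex orthogonal group.

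The main step is then the classical fact that the complex orthogonal group $O(m,\CC)$ has exactly two connected components, distinguished by $\det N=\pm1$ (the determinant is locally constant on $F_k$ since $\det(N)^2=\det(NN^t)=1$, so it takes only the values $\pm1$). Connectedness of each component follows because $SO(m,\CC)$ is connected: it is a connected complex Lie group, e.g.\ because its Lie algebra $\mathfrak{so}(m,\CC)$ is generated under the exponential map, or by Cartan decomposition $O(m,\CC)\simeq O(m,\RR)\times \RR^{\binom m2}$ topologically, which reduces the count to the two components of the compact group $O(m,\RR)$. Taking $m=k-1$ (and noting $k\geq 2$ here, so $m\geq1$ and both values $\pm1$ of the determinant are realised), we conclude $F_k$ has exactly two connected components. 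For $k=1$ the space $MM(1\times 0)$ and the claim degenerate trivially, but in the intended range $k\geq 2$ everything is genuine.

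I expect the only real obstacle to be pinning down the identification $F_k\cong O(k-1,\CC)$ cleanly — one has to check that $A'$ really is (equivalent to) the identity form on $W$, i.e.\ that restricting the standard bilinear form on $\CC^k$ to $W$ and pulling back along a chosen isomorphism $\CC^{k-1}\xrightarrow{\sim}W$ gives a nondegenerate symmetric form, hence is equivalent over $\CC$ to $I_{k-1}$; this is immediate for the specific $A$ in the statement since $W$ is the span of the first $k-1$ coordinate axes. After that, the two-component count is standard and I would simply cite it (e.g.\ via the polar decomposition of $O(m,\CC)$ or the connectedness of $SO(m,\CC)$).
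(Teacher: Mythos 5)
Your proof is correct, and it takes a genuinely different route from the paper. The key step — that for $M\in F_k$ the inclusion $\mathrm{col}(MM^t)\subseteq\mathrm{col}(M)$ together with the equality of ranks forces $\mathrm{col}(M)=\mathrm{col}(A)=\langle e_1,\dots,e_{k-1}\rangle$, so that $M=\binom{N}{0}$ with $NN^t=I_{k-1}$ — is airtight (note that the naive entrywise argument would not work over $\CC$, since the last row could a priori be a nonzero isotropic vector; the rank argument is exactly what is needed). This identifies $F_k$ homeomorphically with the complex orthogonal group $O(k-1,\CC)$, and the two-component count then follows from the standard facts you cite (determinant $\pm1$ locally constant, $SO(k-1,\CC)$ connected, e.g.\ via the Cartan decomposition $O(m,\CC)\cong O(m,\RR)\times\RR^{m(m-1)/2}$). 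The paper instead argues by induction on $k$: it projects $F_k$ to the first row of $M$, obtaining a locally trivial fibration over a space $B_k\simeq\SSS^{k-2}$ with fibre $F_{k-1}$, and applies the homotopy exact sequence, with a separate monodromy check at $k=3$ where $\pi_1(\SSS^{1})\neq 0$. In your language the paper's fibration is precisely $O(k-1,\CC)\to O(k-1,\CC)/O(k-2,\CC)$ over the complex quadric $\{v\cdot v=1\}$, so the two arguments are really the same geometry; your version short-circuits the induction (and the $k=3$ monodromy computation) by recognising the group, at the cost of invoking the classical structure theory of $O(m,\CC)$ rather than remaining self-contained.
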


\begin{proof}
 We will work by induction over $k\geq 2$. For $k=2$, it is a direct computation.

Now let us compute the fibre $F_k$. Consider the following matricial equation:
\[(m_{i,j}) (m_{j,i})=(a_{i,j}).\]
Let $v_i$ be the vector in $\CC^{k-1}$ given by the $i$-th row of $(m_{i,j})$. Denote by $Re(v_i)$ and $Im(v_i)$ its real and imaginary parts respectively.

Now the previous matricial equation becomes the following system of vector equations: if $(i,j)\neq(k,k)$ then 
\[Re(v_i)\cdot Re(v_j)=\delta_{ij}+Im(v_i)\cdot Im(v_j)\]
\[Re(v_i)\cdot Im(v_j)=0\]
and
\[Re(v_k)\cdot Re(v_k)=Im(v_k)\cdot Im(v_k)\]
\[Re(v_k)\cdot Im(v_k)=0,\]
where $v\cdot w$ denotes the standard scalar product in $\RR^{k-1}$.

Consider the projection 
$MM(k\times (k-1))\subset(\CC^{k-1})^k\to\CC^{k-1}$
 to the first component. Let $B_k$ be the image of $F_k$ under this projection. 
It is easy to check that the restriction
\[\tau_k:F_k\to B_k\]
is a locally trivial fibration.

Obviously $B_k$ is 
the set of vectors $v_1$ satisfying the above system of equations for $i=j=1$. The vector $v_1$ belongs to $B_k$ if and only if $||Re(v_1)||^2$ is one unit longer than $||Im(v_1)||^2$ and both vectors are orthogonal. That is, the vector $Re(v_1)$ can be anywhere except in the interior of the unit sphere in $\RR^{k-1}$. If  $||Re(v_1)||^2$ equals $1$ then the vector $Im(v_1)$ is zero. In any other case, the vector $Im(v_1)$ lies in the $k-2$-sphere of radius $\sqrt{1-||Re(v_1)||^2}$ in the hyperplane orthogonal to $Re(v_1)$. It is easy to show that $B_k$ admits the unit sphere in $\RR^{k-1}$ embedded in the real part of $\CC^{k-1}$ as a deformation retract.

The fiber $\tau_k^{-1}((1,0,\ldots,0))$ is equal to the fiber $F_{k-1}$ of $\alpha_{k-1}$ over $A'$
where $A'$ is the result of deleting the first row and the first column in $A$.

We have constructed a fibration of $F_k$ over a space with the homotpy type of $\SSS^{k-2}$ whose fibre is $F_{k-1}$. If $F_{k-1}$ has two connected components and
$k\geq 4$, the homotopy exact sequence of the fibrations gives the result. For $k=3$ we have to check that the monodromy of the fibration does not interchange the two connected components of $F_2$, but this is direct computation.
\end{proof}

\begin{prop}
The fundamental group of $SM(k,1)$ is isomorphic to $\ZZ_2$.
\end{prop}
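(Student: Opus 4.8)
The plan is to use the fibration $\alpha_k : MM(k\times(k-1)) \to SM(k,1)$ established just above, together with the homotopy exact sequence of a fibration. By Lemma~\ref{piunofacil} the total space $MM(k\times(k-1))$ is simply connected, and it is path-connected as well (its complement in the affine space of all $k\times(k-1)$ matrices has real codimension at least $4$). The fibre over the chosen base point is $F_k$, which by Lemma~\ref{fkcomponents} has exactly two connected components. Since $SM(k,1)$ is smooth and connected (it is an orbit of the $GL_k$-action, hence irreducible), the tail of the long exact homotopy sequence
\[
\pi_1(MM(k\times(k-1))) \to \pi_1(SM(k,1)) \to \pi_0(F_k) \to \pi_0(MM(k\times(k-1)))
\]
reads $1 \to \pi_1(SM(k,1)) \to \{\text{two points}\} \to \{\text{one point}\}$, so as a \emph{set} $\pi_1(SM(k,1))$ has two elements. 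It remains only to identify the group structure, and a group with two elements is necessarily $\ZZ_2$; so in fact the exactness of the sequence of pointed sets already forces the answer, and no further work is needed to pin down the isomorphism type.

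More carefully, I would first record that $\alpha_k$ is a locally trivial fibration — this is exactly the statement preceding Lemma~\ref{fkcomponents}, coming from the transitivity of the $GL_k(\CC)$-action $M \mapsto gMg^t$ on $SM(k,1)$ acting compatibly on $MM(k\times(k-1))$ by $M \mapsto gM$. Then I would invoke the long exact sequence of homotopy groups of this fibration. The only genuinely external inputs are: (i) $\pi_1$ of the total space is trivial (Lemma~\ref{piunofacil}); (ii) the fibre has two components (Lemma~\ref{fkcomponents}); (iii) the base is connected. Granting these, the argument is essentially immediate.

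One point that deserves a sentence of care, and which I expect to be the only subtlety, is that the relevant portion of the homotopy sequence involves $\pi_0$, where "exactness" is exactness of pointed sets rather than of groups; one must check that $\pi_1(SM(k,1))$ acts on $\pi_0(F_k)$ and that this action is transitive (because the total space is connected) with stabilizer of the base component equal to the image of $\pi_1$ of the total space (which is trivial). This gives a bijection between $\pi_1(SM(k,1))$ and $\pi_0(F_k)$, hence cardinality $2$. Combined with the fact that $\pi_1$ of any space is a group, and the unique group of order $2$ is $\ZZ_2$, the proposition follows. I would write the proof as: state the fibration, write down the exact sequence, plug in Lemmas~\ref{piunofacil} and~\ref{fkcomponents}, and conclude.
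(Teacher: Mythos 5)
Your argument is correct and is exactly the paper's proof, which reads in its entirety ``This is just the homotopy exact sequence of the fibration $\alpha_k$'': you plug the simple connectivity of $MM(k\times(k-1))$ (Lemma~\ref{piunofacil}) and the two components of $F_k$ (Lemma~\ref{fkcomponents}) into the tail of the homotopy sequence. Your extra care about exactness of pointed sets at the $\pi_0$ stage (the action of $\pi_1$ of the base on $\pi_0$ of the fibre) is the right way to make the terse one-line proof rigorous.
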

\begin{proof}
This is just the homotopy exact sequence of the fibration $\alpha_k$.
\end{proof}

\begin{prop}
\label{grupofund}
The fundamental group of $B_0$ is isomorphic to $\ZZ_2$.
\end{prop}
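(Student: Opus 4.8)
The plan is to compute $\pi_1(B_0)$ by comparing it, through the Hessian-type map, with the stratum $SM(n-3,1)$, whose fundamental group has just been identified with $\ZZ_2$. First I would set up the reductions. Since $B_0$ is a connected subset of $\overline{\Sigma[1]_s}$ which by construction avoids both the corank-$2$ points $\Sigma[2]_s$ and the singular locus of $\overline{\Sigma[1]_s}$, the map $H(f_s)$ restricts to $h\colon B_0\to SM(n-3,1)$; moreover $B_0\hookrightarrow\Sigma[1]_s$ is a homotopy equivalence, as $\Sigma[1]_s$ is recovered from $B_0$ by regluing punctured cone neighbourhoods of the $A_1$-points of $\overline{\Sigma[1]_s}$. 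Recall from the proof of Lemma~\ref{connectedcover} that $\Sigma[1]\cong\calV\times SM(n-3,1)$ with $H(F)$ the second projection and $\calV$ contractible; thus $H(F)\colon\Sigma[1]\to SM(n-3,1)$ is a homotopy equivalence, and composing the fibre inclusion $\Sigma[1]_s\hookrightarrow\Sigma[1]$ with it recovers $h$. So the whole statement reduces to proving that $h_*\colon\pi_1(B_0)\to\pi_1(SM(n-3,1))\cong\ZZ_2$ is an isomorphism, equivalently that the $\ZZ_2$-cover $\widehat{B_0}\to B_0$ obtained by pulling back the connected double cover of $SM(n-3,1)$ (the quotient $MM((n-3)\times(n-4))/SO(n-4,\CC)$ underlying the fibration $\alpha_{n-3}$) is connected and simply connected.

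Surjectivity of $h_*$ (connectedness of $\widehat{B_0}$) I would check in the local $D(3,2)$-model at one of the nodes $p_i$. There $\overline{\Sigma[1]_s}$ is the cone on its link $L\cong\RR\PP^3$, which is one of the boundary components of $B_0$, and $h$ factors through the inclusion of the $2\times2$ corank block $SM(2,1)\hookrightarrow SM(n-3,1)$. A direct computation --- the loop $\theta\mapsto(e^{2\pi i\theta},0,0)$ in $\{x_1x_3=x_2^2\}\smallsetminus\{O\}$ lifts to a path joining the two sheets of the double cover, and $SM(2,1)\hookrightarrow SM(n-3,1)$ induces an isomorphism on $\pi_1$ --- shows that the generator of $\pi_1(L)$, seen in $\pi_1(B_0)$, is carried by $h_*$ to the nontrivial element; in particular the loop $\tau_i$ carried by the $i$-th boundary component of $B_0$ maps to the generator, so $\widehat{B_0}$ is connected.

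The remaining, and hardest, point is injectivity of $h_*$, i.e. the bound $|\pi_1(B_0)|\le 2$. My route would be the following. By Lemma~\ref{2icis} the surface $\overline{\Sigma[1]_0}$ is a $2$-dimensional i.c.i.s., so its Milnor fibre $\mathbf{F}_1$ is simply connected; and $\overline{\Sigma[1]_s}$ occurs in the versal family of $\overline{\Sigma[1]_0}$ as a fibre with exactly $a$ nodes, so smoothing these nodes identifies $\mathbf{F}_1$ with $B_0\cup\bigcup_{i=1}^{a}V_i$, where $V_i$ is the Milnor fibre of an $A_1$ surface singularity (a disc bundle over its vanishing $2$-sphere $\delta_i$, with boundary $\RR\PP^3$) glued to $B_0$ along the $i$-th boundary component. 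Applying van Kampen one $V_i$ at a time shows $\pi_1(B_0)$ is normally generated by the involutions $\tau_1,\dots,\tau_a$, and the transitivity of the monodromy of the covering $\Sigma[2]\cap pr_2^{-1}(S\smallsetminus\Delta)\to S\smallsetminus\Delta$ (Lemma~\ref{connectedcover}, with Lemmas~\ref{fullequivalence} and~\ref{monodromiatransitiva}), realised by the geometric monodromy of the fibration of triples, relates all the $\tau_i$. To conclude $\pi_1(B_0)\cong\ZZ_2$ I would analyse $\widehat{B_0}$ directly: over each boundary $\RR\PP^3$ it is $\SSS^3$, and restricted to $V_i\smallsetminus\delta_i\simeq\RR\PP^3$ the same cover is the (simply connected) universal cover $\SSS^3$, so gluing these pieces back does not affect $\pi_1(\widehat{B_0})$ and reduces the claim to the simple connectivity of a double cover of $\mathbf{F}_1\smallsetminus\bigcup_i\delta_i$; alternatively one can try to transfer the computation $\pi_1(\Sigma[1])\cong\ZZ_2$ from the total space of $pr_2\colon\Sigma[1]|_{S\smallsetminus\Delta}\to S\smallsetminus\Delta$ to its fibre $\Sigma[1]_s\simeq B_0$, which requires controlling the relevant part of $\pi_2(S\smallsetminus\Delta)$. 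I expect this last verification to be the genuine difficulty.
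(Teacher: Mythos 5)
Your reduction to showing that $h_*\colon\pi_1(B_0)\to\pi_1(SM(n-3,1))\cong\ZZ_2$ is an isomorphism is sound, and the surjectivity argument via the local $D(3,2)$ model (the boundary $\RR\PP^3$'s carrying the generator) is correct and consistent with Remark~\ref{isorestriccion}. The van Kampen step is also fine as far as it goes: it shows that $\pi_1(B_0)$ is normally generated by involutions $\tau_1,\dots,\tau_a$, all identified with one another by monodromy. But a group normally generated by a single involution need not be $\ZZ_2$ (nor even finite), so everything hinges on the injectivity of $h_*$, i.e.\ on the bound $|\pi_1(B_0)|\le 2$, and this is exactly the step you leave open. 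Your first proposed route is circular: $\mathbf{F}_1\setminus\bigcup_i\delta_i$ deformation retracts onto $B_0$ itself, so ``simple connectivity of a double cover of $\mathbf{F}_1\setminus\bigcup_i\delta_i$'' is literally the statement $\pi_1(\widehat{B_0})=1$ that you are trying to prove, and gluing back the pieces over $V_i\setminus\delta_i$ extracts no new information. Your second route (transferring $\pi_1(\Sigma[1])\cong\pi_1(\calV\times SM(n-3,1))\cong\ZZ_2$ from the total space of $pr_2$ to its fibre $\Sigma[1]_s\simeq B_0$) is the right idea, but, as you yourself note, it requires a genuine Lefschetz-type input to control the failure of the fibre inclusion to be $\pi_1$-injective, and you do not supply one.

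The paper closes precisely this gap with the Hamm--L\^e local Lefschetz theorem. It realises $B_0$, up to homotopy, as the intersection of the local complement $\calZ_1\setminus\calZ_2$ --- where $\calZ_i=\alpha^{-1}(\overline{SM(n-3,i)})$ for the submersion $\alpha(x,(s_{i,j}))=(m_{i,j}(x)+s_{i,j})$, so that the local fundamental group is $\ZZ_2$ by essentially the same computation on $SM(n-3,1)$ that you invoke --- with the generic level sets $V(g_i-s^0_i)$ and $V(s_{i,j}-s^0_{i,j})$, and then applies Main Theorem~II.1.4 of~\cite{HL} repeatedly, together with the relative homotopy exact sequence, to conclude that each generic slice preserves the fundamental group. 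Some such transversality/Lefschetz statement is the missing ingredient; without it your argument establishes only that $\pi_1(B_0)$ surjects onto $\ZZ_2$ and is normally generated by one involution, which is strictly weaker than the Proposition.
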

\begin{proof}
The unfolding
\[f_s=(g_1-s_1,\ldots,g_{n-3}-s_{n-3})(m_{i,j}+s_{i,j})(g_1-s_1,\ldots,g_{n-3}-s_{n-3})^t,\]
with $s\in\CC^{n-3}\times SM(n-3)$ can be obtained by pullback from the unfoldings of $f$ that we considederd in 
Section~\ref{sectionunfoldings} both in the $corank(H(f)(O)=2$ and $corank(H(f)(O)\neq 2$ cases. In both cases a generic parameter 
$s\in\CC^{n-3}\times SM(n-3)$ maps to a parameter outside the discriminant $\Delta$. Thus we can use this unfolding in order to 
compute the topology of $B_0$.

The mapping
\[\alpha:\CC^n\times SM(n-3)\to SM(n-3)\]
defined by $\alpha(x,(s_{i,j})):=(m_{i,j}(x)+s_{i,j})$ is obviously a submersion wherever it is defined. Define
\[\calZ_i:=\alpha^{-1}(\overline{SM(n-3,i)})\] 
Since $\overline{SM(n-3,i)}$ is a cone for any $i$, and the fundamental group of $SM(n-3,1)$ is isomorphic to $\ZZ_2$, we have that the local fundamental 
group of the germ $SM(n-3,1)$ at the origin is $\ZZ_2$. Since the mapping $\alpha$ is a submersion, the local fundamental group 
of $(\calZ_1\setminus \calZ_2)$ at the origin is $\ZZ_2$.

Fix a positive $\epsilon$ and a generic 
\[s^0=(s^0_1,\ldots,s^0_{n-3},(s^0_{i,j})_{1\leq i\leq j\leq n-3})\in\CC^{n-3}\times SM(n-3)\] 
sufficiently close to the origin. Consider set of functions $\{g_i\}_{i=1}^{n-3}\cup\{s_{i,j}\}_{1\leq i\leq j\leq n-3}$ in 
$\calO_{\CC^n\times SM(n-3)}$. Applying Hamm-L\^e Theorem~(Main Theorem~II.1.4~of~\cite{HL}) repeatedly for the above set of 
functions, and using the relative homotopy exact sequence we
get that the fundamental group of
\[B_\epsilon\cap (\calZ_1\setminus \calZ_2)\cap \bigcap_{i=1}^{n-3}V(g_i-s^0_i)\cap\bigcap_{1\leq i\leq j\leq n-3}V(s_{i,j}-s^0_{i,j})\]
is isomorphic to $\ZZ_2$. But it is clear that the above space is homotopic to $B_0$.
\end{proof}

\subsection{Homology of $B_0$}

We will now compute the homology of $B_0$, which is the same as $B$.
Given the function 
\[det(H(f_s)):\Sigma_s\to\CC\]
we use the Mayer-Vietoris sequence of the decomposition of 
$det(H(f_s))^{-1}(D_\xi)$ as the union of $\cup A_i(\zeta)$ and $B$ given in Section~\ref{sectiondecomp}.

The space $det(H(f_s))^{-1}(D_\xi)$ is homotopy equivalent to $det(H(f_s))^{-1}(0)$, which is homotopic to a bouquet of 
$(\mu_1-a)$ $2$-spheres (see Definition~\ref{defnueva}). 
This is because $det(H(f_s))^{-1}(0)$ is a deformation of $det(H(f))^{-1}(0)$, which is an i.c.i.s. with Milnor number $\mu_1$,
and $det(H(f_s))^{-1}(0)$ has only $a$ Morse-points 
as singularities. 

On the other hand, the intersection of each space $A_i(\zeta)$ with $B$ is the link $\RR\PP^3$ of a Morse type singularity, 
and the spaces $A_i(\zeta)$ are contractible. 

This facts, together with the computation of $\pi_1(B_0)$ allows us to compute the following Mayer-Vietoris sequence:

\begin{equation}\label{MayerVietabajo}
\xymatrix@R=-3ex@C=3ex{0\ar[r] & {\begin{array}{c} \\ \\H_3(\cup A_i(\zeta)\cap B;\ZZ) \\ \rotatebox{90}{$\cong$}\\ \ZZ^a \end{array}}  \ar[r] & {  \begin{array}{c} \\{} \\ H_3(B;\ZZ) \\ \rotatebox{90}{$\cong$}\\ \ZZ^a \end{array}  } \ar[r] & {\begin{array}{c}\\ \\ H_3(det(H(f_s))^{-1}(D_\xi);\ZZ)\\ \rotatebox{90}{$\cong$} \\ 0 \end{array}} \ar[r] & \\
\ar[r] & {\begin{array}{c} \\ \\H_2(\cup A_i(\zeta)\cap B;\ZZ)\\ \rotatebox{90}{$\cong$} \\ 0 \end{array}}  \ar[r] & {  \begin{array}{c} {}\\ \\ H_2(B;\ZZ) \\ \rotatebox{90}{$\cong$} \\ \ZZ^{\mu_1-a} \end{array}  } \ar[r] & {\begin{array}{c}\\ \\ H_2(det(H(f_s))^{-1}(D_\xi);\ZZ)\\ \rotatebox{90}{$\cong$} \\ \ZZ^{\mu_1-a} \end{array}} \ar[r]^-{\delta_2} & \\
\ar[r]^-{\delta_2} & {\begin{array}{c} \\ \\H_1(\cup A_i(\zeta)\cap B;\ZZ)\\ \rotatebox{90}{$\cong$} \\ \ZZ_2^{a} \end{array}}  \ar[r]^-{\alpha_1} & {  \begin{array}{c}\\ {} \\ H_1(B;\ZZ)\\ \rotatebox{90}{$\cong$} \\ \ZZ_2 \end{array}  } \ar[r] & {\begin{array}{c}\\ \\ H_1(det(H(f_s))^{-1}(D_\xi);\ZZ) \\   \rotatebox{90}{$\cong$} \\0 \end{array}} \ar[r] & 0\\
}
\end{equation}

\begin{remark}
\label{isorestriccion}
The restriction of the mapping $\alpha_1$ to $H_1(A_i(\zeta');\ZZ)$ is an isomorphism onto $H_1(B;\ZZ)$ for any $i$.
\end{remark}
\begin{proof}
Obvious from the proof of Proposition~\ref{grupofund}
\end{proof}

The homology of $B$ with coefficients in $\ZZ_2$ can be computed analogously, or by using the universal coefficient theorem. 
We obtain
\begin{itemize}
\item $H_4(B;\ZZ_2)=0$
 \item $H_3(B;\ZZ_2)=\ZZ_2^a$
\item $H_2(B;\ZZ_2)=\ZZ_2\oplus \ZZ_2^{\mu_1-a}$
\item $H_1(B;\ZZ_2)=\ZZ_2$
\end{itemize}

\begin{remark}
Note that the generators of $H_2(det(H(f_s))^{-1}(D_\xi);\ZZ)$ can be interpreted as follows. 
The milnor fibre of $det(H(f_0))^{-1}(0)$ has $\mu_1$ spheres as generators of its homology. Out of these spheres there are $a$ of
them which correspond to the vanishing cycles of the $a$ Morse points of $det(H(f_s))^{-1}(0)$. 
The space $det(H(f_s))^{-1}(D_\xi)$ is homotopic to $det(H(f_s))^{-1}(0)$, which in turn is homotopic to the result of collapsing 
these $a$ spheres in the Milnor fibre of $det(H(f_0))^{-1}(0)$. The remaining spheres give rise to the $\mu_1-a$ generators of 
$H_2(det(H(f_s))^{-1}(D_\xi);\ZZ)$.
\end{remark}

\section{Homology of $(\calB,\calB_u)$}
\label{homologiapar}

There are several sphere fibrations involved in the computation of the homology of the Milnor fibre, and we will need to deal with the 
corresponding Gysin sequences. These are greatly simplified if we are in the case $n\geq 8$. The homology of the Milnor fibre can be 
always deduced (by suspension) from the homology of the Milnor fibre of a function $f:(\CC^n,O)\to\CC$ with $n\geq 8$.
We will assume in $n\geq 8$ whenever is needed.

Consider the fibration $\calB_u\to B_u$. As we have seen previously, it is a fibration, with fiber homotopically equivalent to 
$\SSS^{n-4}$. This fibration can be extended to $h_s^{-1}(u)$, which is simply conected, and hence, the fibration is orientable. 
Its Gysin exact sequence leads to the isomorphisms:
\begin{equation}
\label{gysin1}
H_{i}(B_u;\ZZ)\cong H_{n-4+i}(\calB_u;\ZZ),\quad H_{i}(\calB_u;\ZZ)\cong H_{i}(B_u;\ZZ)
\end{equation}
for $i=0,1,2,3$. The rest of the homology groups of $\calB_u$ vanish.

Consider the projection $\calB\to B_u$. As we have seen before, it is a fibration with fibre homotopically equivalent to 
$\SSS^{n-3}$, and the monodromy reverses the orientation. Since the fibration is not orientable, we can only consider its 
Gysin sequence with coefficients in $\ZZ_2$, which gives the following isomorphisms:
\begin{equation}
\label{gysin2}
H_{i}(B_u;\ZZ_2)\cong H_{n-3+i}(\calB;\ZZ_2),\quad H_{i}(\calB;\ZZ_2)\cong H_{i}(B_u;\ZZ_2)
\end{equation}
for $i=0,1,2,3$. The rest of the homology groups of $\calB$ with coefficients in $\ZZ_2$ vanish.

The fibration of pairs $(\calB,\calB_u)\to B_u$ has as fibre the pair $(\SSS^{n-3},\SSS^{n-4})$ with $\SSS^{n-4}$ embedded as 
the equator of $\SSS^{n-3}$. Its monodromy acts trivally on $\SSS^{n-4}$ and reverses the hemispheres of $\SSS^{n-3}$ along the 
only non-trivial class of $\pi_1(\calB_u)\cong\ZZ^2$.

In order to compute the homology of the pair $(\calB,\calB_u)$ we can simultaneously thicken the equator $\SSS^{n-4}$ of each 
fibre to a small collar $\SSS^{n-4}\times [-\eta,\eta)]$ in $\SSS^{n-3}$. By excision we can remove fibrewise the interior of the collar.
We obtain a fibration over $B_u$ with fibre two $n-3$-disks relative to their boundary, such that the monodromy interchanges them. 

Since $\pi_1(B_u)\cong \ZZ_2$, its universal cover 
\[\sigma:\tilde B_u\to B_u\] is the only connected double cover. 
The fibration of pairs is then homologically equivalent to the composition of
an orientable fibration 
\[\varphi:\calY\to\tilde B_u\]
of $(n-3)$-spheres over $\tilde B_u$ with the covering map $\sigma$. 
The Gysin sequence of the fibration $\varphi$ gives
\[H_k(\calB,\calB_u;\ZZ)\cong H_k(\calY;\ZZ)\cong H_{k-(n-3)}(\tilde B_u;\ZZ)\]
if $k\geq n-3$ and zero otherwise.

The space $\tilde B_u$ is homotopically equivalent to the double cover $\tilde B_0$ of $B_0$ branched over its $a$ singular points, 
minus the preimage of these $a$ points. $\tilde B_0$ is a $2$-dimensional Stein space (for being a branched cover of the $2$-dimensional 
Stein space $B_0$), and hence it has the homotopy type of a $2$-dimensional CW-complex. Therefore, $H_2(\tilde B_0;\ZZ)$ is free 
and $H_3(\tilde B_0;\ZZ)$ vanishes. Since the singularities of $B_0$ are of Morse type, and the $2$-dimensional Morse singularity is the 
quotient of $\CC^2$ by the action of the group of two elements, the space $\tilde B_0$ is smooth. Hence $\tilde B_u$ is the result
of deleting from $\tilde B_0$ small balls around the $a$ preimages by the double cover of the singular points of $B_0$.
Using the Mayer-Vietoris sequence we see that such deletion leaves unchanged the homology except in dimension $3$, 
where we obtain a copy of $\ZZ$ for each deleted point. 
Sumarizing, we get that 
\begin{itemize}
\item $H_3(\tilde B_u;\ZZ)=\ZZ^a$
\item $H_2(\tilde B_u;\ZZ)=\ZZ^k$ for a certain $k$
\item $H_1(\tilde B_u;\ZZ)=0$, since it is the universal cover of $B_u$
\item $H_0(\tilde B_u;\ZZ)=\ZZ$, for it is connected.
\end{itemize}
Since the Euler characteristic of $\tilde{B}_u$ is twice the one of $B_u$, $k$ must be equal to $2\mu_1-3a+1$.

Its is easy to check that the following diagram is commutative
\[
 \xymatrix{H_{i+n-3}(\calB,\calB_u;\ZZ)\ar[r]^{\delta_{i+n-3}}\ar[d]^{\cong} & H_{i+n-4}(\calB_u;\ZZ)\ar[d]^{\cong} \\
H_i(\tilde B_u;\ZZ) \ar[r]^{\sigma_*} & H_i(B_u;\ZZ)},
\]
for any $i$, where $\delta_{i+n-3}$ is the connecting homomorphism of the long exact sequence of the pair $(\calB,\calB_u)$, 
the mapping $\sigma:\tilde B_u \to B_u$ is the covering map and the vertical arrows are the isomorphism coming from the Gysin sequences. 

Notice that the generators of $H_3(\tilde B_u;\ZZ)$ are $3$-spheres bounding balls in $\tilde{B}_0$ around the inverse image of the 
singularities of $B_0$. The generators of $H_3(B_u;\ZZ)$ are precisely the classes $[A_i(\eta)\cap B_u]$. Each of them is 
diffemorphic to $\RR\PP^3$ and doubly covered by one of the $3$-spheres. This shows that 
\[\pi_*:H_3(\tilde B_u;\ZZ)\to H_3(B_u;\ZZ),\]
and hence also $\delta_n$, is multiplication by $2$.

For being $\sigma$ a covering there is a well defined pull-back mapping in homology
\[\sigma^*:H_i(B_u;\ZZ)\to H_i(\tilde B_u;\ZZ).\]
It is clear that the map $\sigma_*\sigma^*:H_i(B_u;\ZZ)\to H_i(B_u;\ZZ)$ is multiplication by $2$ (the degree of the covering). 
This, together with the previous commutative diagram, implies that $2H_{i-1}(\calB_u;\ZZ)$ is always in the image of $\delta_i$ for
any $i$. In view of this and of the long exact sequence of the pair $(\calB,\calB_u)$ we obtain that $H_{n-2}(\calB,\ZZ)$ can not 
have $p$-torsion for $p\neq 2$.

By the above diagram and connectedness of $\tilde B_u$ we have that  $\delta_{n-3}$ is an isomorphism.

Using these facts, together with the previous computations of $H_\bullet(\calB_u;\ZZ)$, $H_\bullet(\calB;\ZZ_2)$ and 
$H_\bullet(\calB,\calB_u;\ZZ)$, plus the universal coefficients theorem allows us to completely determine
the long integral homology exact sequence of the pair $(\calB,\calB_u)$. :

\[
\xymatrix@R=-4ex{0\ar[r] & {\begin{array}{c} \\  \\H_n(\calB_u;\ZZ) \\ \rotatebox{90}{$\cong$} \\ 0 \end{array}}  \ar[r] & {  \begin{array}{c} {} \\ \\ H_n(\calB;\ZZ)\\ \rotatebox{90}{$\cong$} \\ 0 \end{array}  } \ar[r] & {\begin{array}{c} \\ \\ H_n(\calB,\calB_u;\ZZ)\\ \rotatebox{90}{$\cong$} \\ \ZZ^a\end{array}} \ar[r] & \\
\ar[r] & {\begin{array}{c} \\ \\H_{n-1}(\calB_u;\ZZ) \\ \rotatebox{90}{$\cong$}\\ \ZZ^a \end{array}}  \ar[r] & {  \begin{array}{c} \\{} \\ H_{n-1}(\calB;\ZZ) \\ \rotatebox{90}{$\cong$}\\ \ZZ_2^a\oplus \ZZ^{\mu_1-2a+1} \end{array}  } \ar[r] & {\begin{array}{c}\\ \\ H_{n-1}(\calB,\calB_u;\ZZ)\\ \rotatebox{90}{$\cong$} \\ \ZZ^{2\mu_1-3a+1} \end{array}} \ar[r] & \\
\ar[r] & {\begin{array}{c} \\ \\H_{n-2}(\calB_u;\ZZ)\\ \rotatebox{90}{$\cong$} \\ \ZZ^{\mu_1-a} \end{array}}  \ar[r] & {  \begin{array}{c} {}\\ \\ H_{n-2}(\calB;\ZZ) \\ \rotatebox{90}{$\cong$} \\ 0 \end{array}  } \ar[r] & {\begin{array}{c}\\ \\ H_{n-2}(\calB,\calB_u;\ZZ)\\ \rotatebox{90}{$\cong$} \\ 0 \end{array}} \ar[r] & \\
\ar[r] & {\begin{array}{c} \\ \\H_{n-3}(\calB_u;\ZZ)\\ \rotatebox{90}{$\cong$} \\ \ZZ_2 \end{array}}  \ar[r] & {  \begin{array}{c}\\ {} \\ H_{n-3}(\calB;\ZZ)\\ \rotatebox{90}{$\cong$} \\ \ZZ_2 \end{array}  } \ar[r] & {\begin{array}{c}\\ \\ H_{n-3}(\calB,\calB_u;\ZZ) \\   \rotatebox{90}{$\cong$} \\ \ZZ \end{array}} \ar[r] & \\
\ar[r] & {\begin{array}{c} \\ \\H_{n-4}(\calB_u;\ZZ)\\ \rotatebox{90}{$\cong$} \\ \ZZ \end{array}}  \ar[r] & {  \begin{array}{c}\\ {} \\ H_{n-4}(\calB;\ZZ)\\ \rotatebox{90}{$\cong$} \\ 0 \end{array}  } \ar[r] & {\begin{array}{c}\\ \\ H_{n-4}(\calB,\calB_u;\ZZ) \\   \rotatebox{90}{$\cong$} \\0 \end{array}} \ar[r] & \\                                                                                                                                                                                                   
}
\]

The non-zero lower homology groups are isomorphic to those of $\calB_u$, which coincide with those of $B$.

\section{Homology of $\calX$}
\label{seccalx}

Let $\calX$ be the union of $\cup_{i=1}^a \calA_i$ and $\calB$.  We will now consider the Mayer-Vietoris sequence of this union with 
coefficients in $\ZZ_2$. To do so, we need to compute the groups 
$H_\bullet(\calA_i;\ZZ_2)$ and $H_\bullet(\calA_i\cap\calB;\ZZ_2)$, since $H_\bullet(\calB;\ZZ_2)$ has already been computed.

The space $\calA_i$ is the Milnor fiber of the singularity $D(3,2)$, and hence, it has the homotopy type of the sphere $\SSS^{n-2}$.

To study the homology of $\calA_i\cap\calB$, we can use the Gysin sequence of the fibration 
\[\pi:\calA_i\cap\calB\to A_i\cap B\simeq\partial (A_i\cap det(H(f_s))^{-1}(0))\cong \RR\PP^3,\] 
with fibre $\SSS^{n-3}$. 
The groups $H_i(\RR\PP^3;\ZZ_2)$ are $\ZZ_2$ for $i=0,1,2,3$, and zero otherwise.
We obtain that $H_i(\calA_i\cap\calB,\ZZ_2)=\ZZ_2$ for $i=0,1,2,3,n-3,n-2,n-1,n$, and zero otherwise.

To study the maps $\iota_k:\bigoplus_i H_k(\calA_i\cap\calB;\ZZ_2)\to H_k(\calB;\ZZ_2)$ induced by inclussion, 
we will see them as the Gysin lift of the maps 
$\bigoplus_i H_j(A_i\cap B;\ZZ_2)\to H_j(B;\ZZ_2)$ for $j=k$ or $j=k-n+3$. Using the version of the Mayer Vietoris 
sequence~(\ref{MayerVietabajo}) with coefficients in $\ZZ_2$, we get easily
\begin{itemize}
\item $\iota_n$ and $\iota_3$ are isomorphisms.
\item $\iota_{n-1}$ is a monomorphism.
\item $\iota_{n-2}$ and $\iota_{n-3}$ are epimorphisms.
\item $\iota_1$ is an epimorphism.
\item $\iota_2$ is a monomorphism.
\end{itemize}

We need also the following Lemma, whose proof we postpone:

\begin{lema}
\label{postpuesto}
The map $\iota_2:H_{n-2}(\calA_i\cap\calB;\ZZ_2)\to H_{n-2}(\calA_i;\ZZ_2)$ induced by inclussion is an isomorphism.
\end{lema}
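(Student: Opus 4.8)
The plan is to localize the statement near a single point $p_i\in\Sigma[2]_s$ and reduce it to an explicit model for the $D(3,2)$ singularity, where everything can be seen by hand. Near $p_i$ the function $f_s$ is biholomorphic to the restriction to the unit ball of $\CC^n$ of $g:=x_1y_1^2+2x_2y_1y_2+x_3y_2^2+y_3^2+\cdots+y_{n-3}^2$, so that $\calA_i$ is identified with the Milnor fibre $g^{-1}(t_0)\cap B_\epsilon$ and $\calA_i\cap\calB$ with the part of this Milnor fibre lying over the boundary annulus $\partial(A_i(\zeta_0)\cap\det(H(g))^{-1}(0))$, i.e.\ over (a space homotopy equivalent to) the link $L\cong\RR\PP^3$ of the surface $\Sigma[1]=V(x_1x_3-x_2^2)$ inside $\Sigma\cong\CC^3$ at the origin. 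Thus $\calA_i\cap\calB$ is the total space of the $\SSS^{n-3}$-fibration over $\RR\PP^3$ already analysed in Section~\ref{seccalx}, and $H_{n-2}(\calA_i\cap\calB;\ZZ_2)\cong\ZZ_2$ by the Gysin sequence.

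Next I would produce an explicit generating cycle for $H_{n-2}(\calA_i\cap\calB;\ZZ_2)$ and show it hits the generator of $H_{n-2}(\calA_i;\ZZ_2)\cong H_{n-2}(\SSS^{n-2};\ZZ_2)\cong\ZZ_2$ nontrivially. The natural candidate is built from the monodromy picture of Lemma~\ref{monodromiaecuador}: restrict the fibration $\calA_i\cap\calB\to L$ over the circle $\gamma(\theta)=(0,0,e^{2\pi i\theta})$ generating $\pi_1(L)\cong\ZZ_2$. Over this circle the pair $(\alpha^{-1}(\gamma),\beta^{-1}(\gamma))$ is, by that lemma, a mapping torus of $(\SSS^{n-3},\SSS^{n-4})$ whose monodromy is the reflection of $\SSS^{n-3}$ across the equatorial $\SSS^{n-4}$; such a mapping torus is homotopy equivalent to the nonorientable $\SSS^{n-3}$-bundle over $\SSS^1$, which carries a nonzero class in $H_{n-2}(-;\ZZ_2)$ (represented by the $\ZZ_2$-fundamental class of the total space). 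This class maps to the $\ZZ_2$-generator of $H_{n-2}(\calA_i\cap\calB;\ZZ_2)$, so it suffices to track its image in $H_{n-2}(\calA_i;\ZZ_2)$. For this I would use the global quasi-homogeneous model of $\calA_i\simeq\SSS^{n-2}$ from Proposition~\ref{milnordkp}: $\calA_i$ retracts onto $\SSS^{n-2}$ via the fibration structure of the $D(3,2)$ Milnor fibre over $\CC^3\setminus\{O\}$, and one checks that the above mapping-torus cycle, pushed into $\calA_i$, is exactly (a $\ZZ_2$-representative of) this $\SSS^{n-2}$ — concretely, the reflection monodromy over $\gamma$ sweeps the two hemisphere-disks of $\SSS^{n-3}$ around and glues them into the $(n-2)$-sphere.

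I expect the main obstacle to be the last identification: proving that the generator of $H_{n-2}(\calA_i\cap\calB;\ZZ_2)$ is not killed by the inclusion into $\calA_i$, rather than becoming trivial. There are two complementary ways to secure this and I would use whichever is cleanest. First, a Gysin/compatibility argument: the inclusion $\calA_i\cap\calB\hookrightarrow\calA_i$ does not respect the $\SSS^{n-3}$-fibration (the fibration does not extend over the critical point $p_i$), so one cannot argue fibrewise; instead I would use that $\calA_i\cap\calB=\partial N$ for $N$ a regular neighbourhood in $\calA_i$ of the vanishing cycle, hence the inclusion fits in the long exact sequence of $(\calA_i,\calA_i\cap\calB)$, and compute with $\ZZ_2$-Poincaré–Lefschetz duality on the $2(n-1)$-manifold-with-boundary $\calA_i$: the class in $H_{n-2}(\partial\calA_i;\ZZ_2)$ dual to the vanishing $(n-2)$-cycle pairs nontrivially, forcing it to survive in $H_{n-2}(\calA_i;\ZZ_2)$. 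Second, as a direct check, one can run the explicit low-dimensional computation ($n=8$ suffices by the suspension reduction already invoked in Section~\ref{homologiapar}) in the coordinates of Lemma~\ref{monodromiaecuador}, exhibiting the generator of $H_{n-2}(\calA_i)$ literally as the union over $\theta\in[0,1]$ of the fibre cycles. Either route reduces the lemma to a bookkeeping computation in the normal form, which is routine.
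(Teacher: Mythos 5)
Your reduction to the local $D(3,2)$ model is fine, and so is the identification of the generator of $H_{n-2}(\calA_i\cap\calB;\ZZ_2)\cong\ZZ_2$ with the $\ZZ_2$-fundamental class of the mapping torus of the reflection of $\SSS^{n-3}$ over the loop generating $\pi_1(\RR\PP^3)$ (that is just the Gysin wrong-way map). But at that point you have only reproduced the computation of $H_{n-2}(\calA_i\cap\calB;\ZZ_2)$ already done in Section~\ref{seccalx}; the assertion that this class survives in $H_{n-2}(\calA_i;\ZZ_2)\cong\ZZ_2$ \emph{is} the lemma, and neither of your two routes establishes it. The duality route rests on the claim that $\calA_i\cap\calB$ is the boundary of a regular neighbourhood of the vanishing cycle in $\calA_i$; this is false. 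The vanishing $(n-2)$-sphere has real codimension $n$ in the $(2n-2)$-manifold $\calA_i$, so the boundary of its regular neighbourhood is an $\SSS^{n-1}$-bundle over $\SSS^{n-2}$ --- simply connected and with vanishing $H_n$ --- whereas $\calA_i\cap\calB$ is homotopy equivalent to an $\SSS^{n-3}$-bundle over $\RR\PP^3$, with $\pi_1\cong\ZZ_2$ and $H_n(\cdot;\ZZ_2)\cong\ZZ_2$. One could instead try to pair the mapping-torus cycle against a relative $n$-cycle generating $H_n(\calA_i,\partial\calA_i;\ZZ_2)$ via Lefschetz duality, but that is not what you set up, and the odd intersection number would still have to be computed. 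The second route ("routine bookkeeping in the normal form") is a restatement of the problem: you must show that the nonorientable $(n-2)$-manifold swept out by the reflection monodromy has odd mod-$2$ degree onto the $\SSS^{n-2}$ produced by the projection of Proposition~\ref{milnordkp} to the $(y_1,y_2)$-plane. That computation is presumably feasible but it is not carried out, and it is the entire content of the statement, so the proposal as written has a genuine gap exactly where you yourself flag "the main obstacle".

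For comparison, the paper avoids the local computation altogether. Since source and target are both $\ZZ_2$, the map $\iota_2$ is either an isomorphism or identically zero, and the second alternative is excluded by a global consistency check: for the explicit germ $f=(x_1,x_2)(h_{i,j})(x_1,x_2)^t$ on $\CC^5$ with $h_{1,1}=x_3$, $h_{1,2}=x_4$, $h_{2,2}=x_3-x_5^2$, the Milnor fibre is known from~\cite{FdB3} to be homotopy equivalent to $\SSS^3$, while running the Mayer--Vietoris machinery of Sections~\ref{seccalx} and~\ref{seccalm} under the hypothesis $\iota_2=0$ would force a nonzero $H_4$. If you want to keep your direct approach, you must actually carry out the mod-$2$ degree computation; otherwise the paper's indirect argument is the efficient way to close the gap.
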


With all this facts, we can compute the Mayer-Vietoris sequence:

\begin{equation}\label{mayerviet-ABXz2}
\xymatrix@R=1ex@C=4ex{{\begin{array}{c} \\  \\ \oplus_i H_n(\calA_i\cap\calB;\ZZ_2) \\ \rotatebox{90}{$\cong$} \\ \ZZ_2^a \end{array}}  \ar@{^{(}->}[r] & {  \begin{array}{c@{}c@{}c} {} & & \\ & & \\  \oplus _i H_n(\calA_i;\ZZ_2) & \bigoplus  & H_n(\calB;\ZZ_2)\\ \rotatebox{90}{$\cong$} & & \rotatebox{90}{$\cong$} \\ 0 & & \ZZ_2^a\end{array}  } \ar[r] & {\begin{array}{c} \\ \\ H_n(\fraX;\ZZ_2)\\ \rotatebox{90}{$\cong$} \\ 0 \end{array}} \ar `d[l] `l[dll] [dll]+<0ex,3ex>*{} \\
{\begin{array}{c} \\ \\ \oplus_i H_{n-1}(\calA_i\cap\calB;\ZZ_2) \\ \rotatebox{90}{$\cong$}\\ \ZZ_2^a \end{array}}  \ar[r] & {  \begin{array}{c@{}c@{}c} & & \\{} & & \\ \oplus_i H_{n-1}(\calA_i;\ZZ_2) & \bigoplus & H_{n-1}(\calB;\ZZ_2) \\ \rotatebox{90}{$\cong$} & & \rotatebox{90}{$\cong$}\\ 0 & &\ZZ_2\oplus \ZZ_2^{\mu_1-a} \end{array}  } \ar[r] & {\begin{array}{c}\\ \\ H_{n-1}(\fraX;\ZZ_2)\\ \rotatebox{90}{$\cong$} \\ \ZZ_2^{\mu_1-2a+1} \end{array}} \ar `d[l] `l[dll] [dll]+<0ex,3ex>*{}\\
{\begin{array}{c} \\ \\ \oplus_i H_{n-2}(\calA_i\cap\calB;\ZZ_2)\\ \rotatebox{90}{$\cong$} \\ \ZZ_2^a \end{array}}  \ar[r] & {  \begin{array}{c@{}c@{}c} & &  {}\\ & & \\ \oplus_i H_{n-2}(\calA_i;\ZZ_2) & \bigoplus & H_{n-2}(\calB;\ZZ_2) \\ \rotatebox{90}{$\cong$} & & \rotatebox{90}{$\cong$}\\ \ZZ_2^{a} &  & \ZZ_2 \end{array}  } \ar[r] & {\begin{array}{c}\\ \\ H_{n-2}(\fraX;\ZZ_2)\\ \rotatebox{90}{$\cong$} \\ \ZZ_2\oplus\ZZ_2^{a-1} \end{array}}\ar `d[l] `l[dll] [dll]+<0ex,3ex>*{}\\
{\begin{array}{c} \\ \\ \oplus_i H_{n-3}(\calA_i\cap\calB;\ZZ_2)\\ \rotatebox{90}{$\cong$} \\ \ZZ_2^a \end{array}}  \ar[r] & {  \begin{array}{c@{}c@{}c} & &  {}\\ & & \\ \oplus_i H_{n-3}(\calA_i;\ZZ_2) & \bigoplus & H_{n-3}(\calB;\ZZ_2) \\ \rotatebox{90}{$\cong$} & & \rotatebox{90}{$\cong$}\\ 0 &  & \ZZ_2 \end{array}  } \ar@{>>}[r] & {\begin{array}{c}\\ \\ H_{n-3}(\fraX;\ZZ_2)\\ \rotatebox{90}{$\cong$} \\ 0 \end{array}}}
\end{equation}

We omit the lower part of the sequence. The non-vanishing remaining homology groups of $\calX$ are
\[H_2(\calX;\ZZ_2)\cong \ZZ_2^{\mu_1-a},\quad H_0(\calX;\ZZ_2)\cong \ZZ_2.\]

\subsection{A basis of $H_{n-2}(\calX;\ZZ_2)$}
\label{gusanosz2} 
Fix a base point $x_1\in A_1(\zeta)\cap B_u$. Choose paths $\gamma_i:[0,1]\to B_u$ such that $\gamma_1$ is a generator of the fundamental 
group of $A_1(\zeta)\cap B_u$, and $\gamma_i$ connects $x_1$ with some point $x_i\in A_i(\zeta)\cap B_u$. We choose chains $G_i \subset \calB$ 
such that the natural projection $\pi|_{G_i}$ is a locally trivial fibration over $\gamma_i$ with fibre diffeomorphic to a 
$\SSS^{n-3}$ generating the homology of the corresponding fibre of $(\rho|_{B_u})^{-1}\comp\rho\comp\pi|_\calB$. Since $\gamma_1$ is closed, the chain $G_1$ is closed with 
coefficients in $\ZZ_2$. For each $i$, we choose an $(n-2)$-sphere generating $H_{n-2}(\calA_i;\ZZ)$. 
Take a hemisphere $K_i$ of such sphere; its boundary $\partial K_i$ is an $(n-3)$-sphere in $\calA_i$. The boundary $\partial G_i$ 
consists of two $(n-3)$-spheres $L_1$ and $L_i$, being $L_i$ contained in $\calA_i$. Since $\calA_i$ is homotopic to $\SSS^{n-2}$ 
there exists a chain 
\[W_i:[0,1]\times\SSS^{n-3}\to\calA_i\]
such that $\partial W_i=\partial K_i+L_i$.

The generators of $H_{n-2}(\calX;\ZZ_2)$ are represented by the $\ZZ_2$-closed chains $Z_1:=G_1$ and $Z_i:=K_1+W_1+G_i+W_i+K_i$. 
Notice that since the coefficients are in
$\ZZ_2$ we have $K_1+W_1+C_1+W_1+K_1=G_1$, and so the way of defining the generators is homogeneous. To check that these are really 
generators we observe that $Z_2,...,Z_a$ are sent by the connecting homomorphism of the Mayer-Vietoris sequence~\ref{mayerviet-ABXz2}
to the kernel of the first mapping of the $(n-3)$-row, and that $Z_1$ generates the cokernel of the first mapping of the $(n-2)$-row. 

\begin{lema}
\label{cambiocamino}
Let $\gamma'_i:[0,1]\to B_u$ be any other path joining $x'_1$ and $x'_i$, being $x'_1$ and $x'_i$ points in $A_1(\zeta)\cap B$ and $A_i(\zeta)\cap B$
respectively. As above we can associate with $\gamma'_i$ an element $[Z'_i]\in H_{n-2}(\calX;\ZZ_2)$.
We have the equality
\[[Z'_i]=[Z_i]+m[Z_1]\]
for a certain $m\in\ZZ_2$.
\end{lema}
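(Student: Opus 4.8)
The plan is to show that every choice entering the construction of the cycle $Z_i$ from a path $\gamma_i$ alters the resulting class $[Z_i]\in H_{n-2}(\calX;\ZZ_2)$ only by an element of the subgroup generated by $[Z_1]$. There are three such choices: the auxiliary data (the spheres and the hemispheres/interpolating chains $K_1,W_1,K_i,W_i$ used to build $Z_i$, and the fibrewise generating sphere used to build $G_i$); the positions of the endpoints $x_1,x_i$ inside $A_1(\zeta)\cap B_u$ and $A_i(\zeta)\cap B_u$; and the homotopy class of $\gamma_i$ once the endpoints are pinned down.

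First I would record the identity $[\calA_j]=[Z_1]$ in $H_{n-2}(\calX;\ZZ_2)$ for every $j$, where $[\calA_j]$ denotes the class of the Milnor fibre sphere $\calA_j\simeq\SSS^{n-2}$. This comes out of the Mayer--Vietoris sequence~(\ref{mayerviet-ABXz2}): by Lemma~\ref{postpuesto} the inclusion sends the generator $e_j$ of $H_{n-2}(\calA_j\cap\calB;\ZZ_2)$ to $[\calA_j]$, while $\iota_{n-2}(e_j)$ is the image of $e_j$ in $H_{n-2}(\calB;\ZZ_2)$. Representing $e_j$ by a fibrewise generating sphere of $\calA_j\cap\calB\to A_j(\zeta)\cap B_u\cong\RR\PP^3$ lying over a generator of $\pi_1(A_j(\zeta)\cap B_u)$, and using the Gysin isomorphisms~(\ref{gysin2}) together with Remark~\ref{isorestriccion} and Proposition~\ref{grupofund} (which give that $\pi_1(A_j(\zeta)\cap B_u)\to\pi_1(B_u)$ is an isomorphism of groups $\cong\ZZ_2$), one sees that this image is the generator $[G_1]$ of $H_{n-2}(\calB;\ZZ_2)$. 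Hence the Mayer--Vietoris map sends $e_j$ to $([\calA_j],[G_1])$, which lies in the kernel of the next map, so $[\calA_j]=[G_1]=[Z_1]$ in $H_{n-2}(\calX;\ZZ_2)$.

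Granting this, I would dispose of the three ambiguities in turn. Two admissible choices of the spheres and of $K_1,W_1,K_i,W_i$ give cycles whose difference is a $\ZZ_2$-cycle supported in $\calA_1\sqcup\calA_i$ (the $K$'s and $W$'s live there, and changing the fibrewise generator of $G_i$ over the endpoints of $\gamma_i$ only alters chains inside the fibres, which sit in $\calA_1\cap\calB$ and $\calA_i\cap\calB$), hence a $\ZZ_2$-combination of $[\calA_1]$ and $[\calA_i]$, that is a multiple of $[Z_1]$. Next, moving $x_1'$ to $x_1$ along a path $\mu_1\subset A_1(\zeta)\cap B_u$ and $x_i'$ to $x_i$ along $\mu_i\subset A_i(\zeta)\cap B_u$ replaces $\mathrm{cyl}(\gamma_i')$ by $\mathrm{cyl}(\mu_1\cdot\gamma_i'\cdot\mu_i)=\mathrm{cyl}(\mu_1)+\mathrm{cyl}(\gamma_i')+\mathrm{cyl}(\mu_i)$ and forces a re-choice of the interpolating chains; since $\mathrm{cyl}(\mu_1)\subset\calA_1\cap\calB$, $\mathrm{cyl}(\mu_i)\subset\calA_i\cap\calB$, and the new chains again lie in $\calA_1\sqcup\calA_i$, the class changes once more by a $\ZZ_2$-cycle in $\calA_1\sqcup\calA_i$, hence by a multiple of $[Z_1]$. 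We are then left with two paths $\gamma_i,\gamma_i'$ from $x_1$ to $x_i$; as $\pi_1(B_u,x_1)\cong\ZZ_2$ is generated by $[\gamma_1]$, either $\gamma_i'\simeq\gamma_i$ rel endpoints, whence $\mathrm{cyl}(\gamma_i')$ is homologous rel boundary to $\mathrm{cyl}(\gamma_i)$ and $[Z_i']=[Z_i]$, or $\gamma_i'\simeq\gamma_1\cdot\gamma_i$ rel endpoints, whence $\mathrm{cyl}(\gamma_i')$ is homologous to $G_1+\mathrm{cyl}(\gamma_i)$ and $[Z_i']=[Z_1]+[Z_i]$. Adding up the contributions yields $[Z_i']=[Z_i]+m[Z_1]$ with $m\in\ZZ_2$.

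The main obstacle is the first step: extracting $[\calA_j]=[Z_1]$ requires a careful run through the Mayer--Vietoris sequence~(\ref{mayerviet-ABXz2}) and genuinely uses the (postponed) Lemma~\ref{postpuesto} together with the Gysin identification of $\iota_{n-2}(e_j)$. The second delicate point is chain-theoretic: one must check that relocating an endpoint of the cylinder perturbs the cycle only inside $\calA_1\cup\calA_i$, so that the perturbation is absorbed by the previous steps.
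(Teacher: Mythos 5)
Your proof is correct and follows essentially the same route as the paper: the paper's argument also joins the endpoints by paths inside $A_1(\zeta)\cap B$ and $A_i(\zeta)\cap B$, forms the loop $\gamma_i\centerdot\alpha_i\centerdot(\gamma'_i)^{-1}\centerdot(\alpha_1)^{-1}$, and uses that $\pi_1(B_u,x_1)\cong\ZZ_2$ is generated by $\gamma_1$. Your preliminary identity $[\calA_j]=[Z_1]$, extracted from the Mayer--Vietoris sequence via Lemma~\ref{postpuesto} and Remark~\ref{isorestriccion}, is exactly the detail hiding behind the paper's phrase ``the equality follows easily from the construction of the chains $Z_i$'', since it is what absorbs the ambiguities in the auxiliary chains $K_j,W_j$ supported in $\calA_1\cup\calA_i$.
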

\begin{proof}
Let $\alpha_j$ be a path joining $x_j$ and $x'_j$ for $j=1,i$.
The product of paths $\gamma_i\centerdot\alpha_i\centerdot (\gamma'_i)^{-1}\centerdot (\alpha_1)^{-1}$ is a loop based in $x_1$. 
Since the fundamental group $\pi_1(B_u,x_1)$ is isomorphic to $\ZZ_2$ and generated by $\gamma_1$, the loop 
$\gamma_i\centerdot\alpha_i\centerdot (\gamma'_i)^{-1}\centerdot (\alpha_1)^{-1}$
is homotopic to $m\gamma_1$ for a certain $m$. After this, the above equality follows easily from the construction of the chains $Z_i$.
\end{proof}

\subsection{A system of generators of $H_{n-2}(\calX_i;\ZZ)$}
\label{gusanosz}
To lift the computation to coefficients in $\ZZ$, we need to compute the integer homology of $\calA_i\cap\calB$. 
We can do so by computing the long exact sequence of the pair $(\calA_i\cap\calB,\calA_i\cap\calB_u)$ using the same arguments that 
we used to compute the long exact sequence of the pair $(\calB,\calB_u)$. We obtain:

\[H_{n-1}(\calA_i\cap\calB;\ZZ)\cong\ZZ_2,\quad H_{n-3}(\calA_i\cap\calB;\ZZ)\cong\ZZ_2,\]
\[H_3(\calA_i\cap\calB;\ZZ)\cong\ZZ,\quad H_1(\calA_i\cap\calB;\ZZ)\cong\ZZ_2,\quad H_0(\calA_i\cap\calB;\ZZ)\cong\ZZ.\]
and zero otherwise.

With these data, and the universal coefficients theorem, we can compute the Mayer-Vietoris sequence (\ref{mayerviet-ABXz2}) with coefficients in $\ZZ$:

 \[\label{mayerviet-ABXz}
\xymatrix@R=1ex@C=3ex{{\begin{array}{c} \\  \\ \oplus_i H_n(\calA_i\cap\calB;\ZZ) \\ \rotatebox{90}{$\cong$} \\ 0 \end{array}}  \ar@{^{(}->}[r] & {  \begin{array}{c@{}c@{}c} {} & & \\ & & \\  \oplus _i H_n(\calA_i;\ZZ) & \bigoplus  & H_n(\calB;\ZZ)\\ \rotatebox{90}{$\cong$} & & \rotatebox{90}{$\cong$} \\ 0 & & 0\end{array}  } \ar[r] & {\begin{array}{c} \\ \\ H_n(\fraX;\ZZ)\\ \rotatebox{90}{$\cong$} \\ 0 \end{array}} \ar `d[l] `l[dll] [dll]+<0ex,3ex>*{}\\
{\begin{array}{c} \\ \\ \oplus_i H_{n-1}(\calA_i\cap\calB;\ZZ) \\ \rotatebox{90}{$\cong$}\\ \ZZ_2^a \end{array}}  \ar[r] & {  \begin{array}{c@{}c@{}c} & & \\{} & & \\ \oplus_i H_{n-1}(\calA_i;\ZZ) & \bigoplus & H_{n-1}(\calB;\ZZ) \\ \rotatebox{90}{$\cong$} & & \rotatebox{90}{$\cong$}\\ 0 & \ZZ_2^a&\oplus \ZZ^{\mu_1-2a+1} \end{array}  } \ar[r] & {\begin{array}{c}\\ \\ H_{n-1}(\fraX;\ZZ)\\ \rotatebox{90}{$\cong$} \\ \ZZ^{\mu_1-2a+1} \end{array}} \ar `d[l] `l[dll] [dll]+<0ex,3ex>*{}\\
{\begin{array}{c} \\ \\ \oplus_i H_{n-2}(\calA_i\cap\calB;\ZZ)\\ \rotatebox{90}{$\cong$} \\ 0 \end{array}}  \ar[r] & {  \begin{array}{c@{}c@{}c} & &  {}\\ & & \\ \oplus_i H_{n-2}(\calA_i;\ZZ) & \bigoplus & H_{n-2}(\calB;\ZZ) \\ \rotatebox{90}{$\cong$} & & \rotatebox{90}{$\cong$}\\ \ZZ^{a} &  & 0 \end{array}  } \ar[r] & {\begin{array}{c}\\ \\ H_{n-2}(\fraX;\ZZ)\\ \rotatebox{90}{$\cong$} \\ \ZZ^{a} \end{array}} \ar`d[l] `l[dll] [dll]+<0ex,3ex>*{} \\
{\begin{array}{c} \\ \\ \oplus_i H_{n-3}(\calA_i\cap\calB;\ZZ)\\ \rotatebox{90}{$\cong$} \\ \ZZ_2^a \end{array}}  \ar[r] & {  \begin{array}{c@{}c@{}c} & &  {}\\ & & \\ \oplus_i H_{n-3}(\calA_i;\ZZ) & \bigoplus & H_{n-3}(\calB;\ZZ) \\ \rotatebox{90}{$\cong$} & & \rotatebox{90}{$\cong$}\\ 0 &  & \ZZ_2 \end{array}  } \ar@{>>}[r] & {\begin{array}{c}\\ \\ H_{n-3}(\fraX;\ZZ)\\ \rotatebox{90}{$\cong$} \\ 0 \end{array}}}
\]

The non-zero lower homology groups are isomorphic to those of $\Sigma[1]_s$.

We give a sufficient system of generators of $H_{n-2}(\calX;\ZZ)$. For any $i$ choose an $(n-2)$-sphere $S_i$ in $\calA_i$ 
generating $H_{n-2}(\calA_i;\ZZ)$. Choosing the
orientations of the summands of $Z_i$ appropiately it turns out that we have a $\ZZ$-closed chain. It is clear that 
$[Z_2],...,[Z_a]$ generate the kernel
of the first homomorphism of the $(n-3)$-row of the Mayer-Vietoris sequence. The image of the second morphism of the $(n-2)$-row 
is obviously generated by the $(n-2$-spheres $S_i$..

\section{Homology of $\calM$}
\label{seccalm}

\subsection{Coefficients in $\ZZ_2$}
Recall that $\Sigma_s$ is the Milnor fibre of $\Sigma$, and has the homotopy type of a bouquet of $\mu_0$ spheres. The functions 
$g_1,...,g_{n-3},det(H(f))$ define a $2$-dimensional i.c.i.s. $\Sigma[2]_0$ with Milnor number $\mu_1$ (see Definition~\ref{defnueva}). 

The restriction 
\[det(H(f_s))|_{\Sigma_s}:\Sigma_s\to\CC\]
has isolated critical points. Therefore, taking $\eta$ so small that the disk $D_\eta$ only contains $0$ as critical value of 
the restriction, the set $\Sigma_s$ 
is homotopy equivalent to the result of attaching to $(det(H(f_s))|_{\Sigma_s})^{-1}(D_\eta)$ the Lefschetz thimbles associated to 
the critical points of $det(H(f_s))|_{\Sigma_s}$ not
contained in the zero level. There are exactly $\mu_0+\mu_1-a$ such Lefschetz thimbles (see~\cite{Lo}). 
Since the Lefschetz thimbles are $3$-disks they are attached 
along $2$-spheres to the boundary of $(det(H(f_s))|_{\Sigma_s})^{-1}(D_\eta)$, which is $5$-dimensional. 
Hence, a transversality argument ensures that all the attaching spheres are disjoint. Denote by $C_1,...,C_{\mu_0+\mu_1-a}$ the 
Lefschetz thimbles. We have found a homotopy equivalence 
\begin{equation}
\label{eqretraction1}
M':=(det(H(f_s))|_{\Sigma_s})^{-1}(D_\eta)\bigcup (\cup_{i=1}^{\mu_0+\mu_1-a}C_i)\hookrightarrow \Sigma_s,
\end{equation}
which in fact (since we are working with $CW$-complexes) is a deformation retract.

Since we have a locally trivial fibration
\begin{equation}
\label{fibracionsn-4}
\pi:\calM\setminus\pi^{-1}(det(H(f_s))^{-1}(0))\to\Sigma_s\setminus det(H(f_s))^{-1}(0)
\end{equation}
we can lift the deformation retract~(\ref{eqretraction1}) to a deformation retract
\begin{equation}
\label{eqretraction2}
\calM':=\pi^{-1}(M')\hookrightarrow \calM.
\end{equation}

We will compute the homology of $\calM'$ using a Mayer-Vietoris sequence. By the previous deformation retract we identify the 
homology of $\calM'$ and $\calM$. Denote $\pi^{-1}(C_i)$ by $\calC_i$. Since $C_i$ is contractible the fibration over it 
is trivial, and, hence, $\calC_i$ and $\pi^{-1}(\partial C_i)$ are homotopy equivalent to $C_i\times\SSS^{n-4}$, and 
$\partial C_i\times\SSS^{n-4}\cong\SSS^2\times\SSS^{n-4}$. Decompose $\calM'$ as
\begin{equation}
\label{descom}
\calM'=\calX\bigcup(\cup_{i=1}^{\mu_0+\mu_1-a}\calC_i).
\end{equation}
The associated Mayer-Vietoris sequence (with coefficients in $\ZZ_2$) is:

\[\label{mayervietMzeta2}
\xymatrix@R=1ex@C=3ex{{\begin{array}{c} \\  \\ \oplus_i H_{n-1}(\pi^{-1}(\partial C_i);\ZZ_2) \\ \rotatebox{90}{$\cong$} \\ 0 \end{array}}  \ar@{^{(}->}[r] & {  \begin{array}{c@{}c@{}c} {} & & \\ & & \\  \oplus _i H_{n-1}(\calC_i;\ZZ_2) & \bigoplus  & H_{n-1}(\calX;\ZZ_2)\\ \rotatebox{90}{$\cong$} & & \rotatebox{90}{$\cong$} \\ 0 & & \ZZ_2^{\mu_1-2a+1}\end{array}  } \ar[r] & {\begin{array}{c} \\ \\ H_{n-1}(\calM;\ZZ_2)\\ \rotatebox{90}{$\cong$} \\ \ZZ_2^{\mu_0+2\mu_1-4a+1+e} \end{array}} \ar `d[l] `l[dll] [dll]+<0ex,3ex>*{} \\
{\begin{array}{c} \\ \\ \oplus_i H_{n-2}(\pi^{-1}(\partial C_i);\ZZ_2) \\ \rotatebox{90}{$\cong$}\\ \ZZ_2^{\mu_0+\mu_1-a} \end{array}}  \ar[r]^-{\varphi_{n-2}} & {  \begin{array}{c@{}c@{}c} & & \\{} & & \\ \oplus_i H_{n-2}(\calC_i;\ZZ_2) & \bigoplus & H_{n-2}(\calX;\ZZ_2) \\ \rotatebox{90}{$\cong$} & & \rotatebox{90}{$\cong$}\\ 0 & &\ZZ_2\oplus \ZZ_2^{a-1} \end{array}  } \ar[r] & {\begin{array}{c}\\ \\ H_{n-2}(\calM;\ZZ_2)\\ \rotatebox{90}{$\cong$} \\ \ZZ_2^{e} \end{array}} \ar `d[l] `l[dll] [dll]+<0ex,3ex>*{} \\
{\begin{array}{c} \\ \\ \oplus_i H_{n-3}(\pi^{-1}(\partial C_i);\ZZ_2)\\ \rotatebox{90}{$\cong$} \\ 0 \end{array}}  \ar[r] & {  \begin{array}{c@{}c@{}c} & &  {}\\ & & \\ \oplus_i H_{n-3}(\calC_i;\ZZ_2) & \bigoplus & H_{n-3}(\calX;\ZZ_2) \\ \rotatebox{90}{$\cong$} & & \rotatebox{90}{$\cong$}\\ 0 &  & 0 \end{array}  } \ar[r] & {\begin{array}{c}\\ \\ H_{n-3}(\calM;\ZZ_2)\\ \rotatebox{90}{$\cong$} \\ 0 \end{array}} \ar `d[l] `l[dll] [dll]+<0ex,3ex>*{} \\
{\begin{array}{c} \\ \\ \oplus_i H_{n-4}(\pi^{-1}(\partial C_i);\ZZ_2)\\ \rotatebox{90}{$\cong$} \\ \ZZ_2^{\mu_0+\mu_1-a} \end{array}}  \ar[r] & {  \begin{array}{c@{}c@{}c} & &  {}\\ & & \\ \oplus_i H_{n-4}(\calC_i;\ZZ_2) & \bigoplus & H_{n-4}(\calX;\ZZ_2) \\ \rotatebox{90}{$\cong$} & & \rotatebox{90}{$\cong$}\\ \ZZ_2^{\mu_0+\mu_1-a} &  & 0 \end{array}  } \ar@{>>}[r] & {\begin{array}{c}\\ \\ H_{n-4}(\calM;\ZZ_2)\\ \rotatebox{90}{$\cong$} \\ 0 \end{array}}}
\]
for some $e\in\NN$.

We will find out what are the possible values for $e$. We have given a basis $\{Z_i\}_{i=1}^a$ of $H_{n-2}(\calX;\ZZ_2)$ 
in~\ref{gusanosz2}.

\begin{lema}
\label{imagen}
The composition
\[\tau:\oplus_i H_{n-2}(\pi^{-1}(\partial C_i);\ZZ_2)\stackrel{\varphi_{n-2}}{\to}H_{n-2}(\calX;\ZZ_2)\to H_{n-2}(\calX;\ZZ_2)/([Z_1])\]
is surjective.
\end{lema}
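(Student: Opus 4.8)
The plan is to evaluate $\varphi_{n-2}$ on the obvious generators, push the result through the Mayer--Vietoris boundary of the decomposition $\calX=(\cup_k\calA_k)\cup\calB$ used in~(\ref{mayerviet-ABXz2}), and translate the statement into a spanning question about vanishing cycles of $g:=\det(H(f_s))|_{\Sigma_s}$.

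\emph{Step 1 (generators and their images).} Since $\pi^{-1}(\partial C_i)\cong\SSS^2\times\SSS^{n-4}$ and $n\geq 8$, the K\"unneth theorem gives $H_{n-2}(\pi^{-1}(\partial C_i);\ZZ_2)\cong\ZZ_2$, generated by the class $z_i$ of the $\SSS^{n-4}$-subbundle over $\partial C_i$ (the $\SSS^{n-4}$ being the equatorial Morse fibre inside the $D(1,1)$-fibre); its image $\varphi_{n-2}(z_i)$ is the class $[\pi^{-1}(\partial C_i)]\in H_{n-2}(\calX;\ZZ_2)$. As $C_i$ is a Lefschetz thimble of $g$ at a critical point off the zero level, $\partial C_i$ is a vanishing $2$-cycle of $g$ in the fibre $g^{-1}(u)\cap\Sigma_s$, a Milnor fibre of the $2$-dimensional i.c.i.s. $\overline{\Sigma[1]_0}$ (Lemma~\ref{2icis}). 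I would write $\partial C_i=P_i+Q_i$ with $P_i=\partial C_i\cap\bigl(\cup_kA_k(\zeta_0)\bigr)$ and $Q_i\subset B$ the complementary chain; lifting gives $[\pi^{-1}(\partial C_i)]=[\pi^{-1}(P_i)]+[\pi^{-1}(Q_i)]$ as a sum of a chain in $\cup_k\calA_k$ and a chain in $\calB$.

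\emph{Step 2 (reduction to vanishing cycles).} The Mayer--Vietoris boundary $\partial^{\calX}$ of the decomposition $\calX=(\cup_k\calA_k)\cup\calB$ then sends $\varphi_{n-2}(z_i)$ to the Gysin lift of $\sum_k\bigl[\partial(\partial C_i\cap A_k(\zeta_0))\bigr]$ in $\oplus_kH_{n-3}(\calA_k\cap\calB;\ZZ_2)\cong\oplus_kH_1(\RR\PP^3;\ZZ_2)\cong\ZZ_2^a$. Using the long exact sequence of the pair $\bigl(A_k(\zeta_0)\cap g^{-1}(u),\RR\PP^3\bigr)$ and Lefschetz duality inside $A_k(\zeta_0)\cap g^{-1}(u)\simeq T^*\SSS^2$, its $k$-th component equals the mod-$2$ intersection number $[\partial C_i]\cdot[V_k]$, where $V_k$ is the core $2$-sphere of $A_k(\zeta_0)\cap g^{-1}(u)$, i.e. the vanishing cycle of the Morse point $p_k$ of $\overline{\Sigma[1]_s}$. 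Thus $\partial^{\calX}\varphi_{n-2}(z_i)=\sum_k\bigl([\partial C_i]\cdot[V_k]\bmod 2\bigr)e_k$, with $e_k$ the $k$-th standard generator. From~(\ref{mayerviet-ABXz2}), $\partial^{\calX}\colon H_{n-2}(\calX;\ZZ_2)\to\ZZ_2^a$ has kernel the one-dimensional image of $H_{n-2}(\calB;\ZZ_2)$, which contains $[Z_1]=[G_1]$, and its image is the sum-zero subspace (the relevant map $\oplus_kH_1(A_k\cap B;\ZZ_2)\to H_1(B;\ZZ_2)$ is the sum map, by Remark~\ref{isorestriccion}). Hence $\partial^{\calX}$ induces an isomorphism $H_{n-2}(\calX;\ZZ_2)/([Z_1])\xrightarrow{\sim}\{x\in\ZZ_2^a:\textstyle\sum_kx_k=0\}$, so $\tau$ is surjective if and only if the vectors $v_i:=\partial^{\calX}\varphi_{n-2}(z_i)$ span this sum-zero subspace. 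Each $v_i$ lies in it automatically, being in the image of $\partial^{\calX}$.

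\emph{Step 3 (spanning) and the main obstacle.} The $a$ spheres $V_k$ together with the vanishing cycles $\{\partial C_i\}$ of the off-zero critical points form a distinguished system of vanishing cycles of $g$, which generate $H_2(g^{-1}(u)\cap\Sigma_s;\ZZ_2)$ (attaching all $g$-thimbles to $g^{-1}(u)\cap\Sigma_s$ recovers $\Sigma_s$, whose $H_2$ vanishes); moreover the $V_k$ are pairwise disjoint, being supported near the $a$ distinct points $p_k$. Connectedness of the Milnor fibre $g^{-1}(u)\cap\Sigma_s$ forces the associated Dynkin-type diagram to be connected, so each $V_k$ is joined to the others only through the $\partial C_i$'s; a graph-theoretic argument, sharpened if necessary by Gauss--Manin (braid) moves on the distinguished system as in the proof of Lemma~\ref{monodromiatransitiva}, then produces for each $k\geq 2$ an off-zero critical point whose vanishing cycle meets exactly $V_1$ and $V_k$, giving $v=e_1+e_k$. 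Since $\{e_1+e_k\}_{k=2}^{a}$ spans the sum-zero subspace, $\tau$ is surjective. The main obstacle is exactly this last step: showing that connectivity of the Milnor fibre of $g$ (equivalently of $\overline{\Sigma[1]_s}$, cf.\ Lemmas~\ref{2icis} and \ref{connectedcover}) genuinely forces the $v_i$ to span the whole sum-zero subspace and not a proper subspace --- i.e.\ controlling the incidence between the vanishing cycles of the retained Morse points $p_k$ and those of the off-zero critical points of $g$; a secondary delicate point is carrying out the intersection-number identification of Step~2 compatibly with the Gysin isomorphisms.
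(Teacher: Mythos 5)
Your Steps 1 and 2 reproduce, in a slightly different packaging, what the paper actually does: the identification of $\tau([\pi^{-1}(\partial C_k)])$ with the vector of mod-$2$ intersection numbers $b_{k,i}$ of $\partial C_k$ with the vanishing spheres $E_i$ of the Morse points $p_i$ (the paper proves this as the claim~(\ref{eqn1}) by a rather delicate chain-level construction, which your appeal to Lefschetz duality in $A_k(\zeta_0)\cap g^{-1}(u)$ is silently assuming), and the identification of $H_{n-2}(\calX;\ZZ_2)/([Z_1])$ with the sum-zero subspace of $\ZZ_2^a\cong\ker(\alpha_1)$, using Remark~\ref{isorestriccion}. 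Up to that point your reduction is correct and matches the paper's.

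The gap is Step 3, and it is a real one. Connectivity of the Dynkin diagram of a distinguished system of vanishing cycles of $g=det(H(f_s))|_{\Sigma_s}$ does not imply that the vectors $v_i=([\partial C_i]\cdot[V_k]\bmod 2)_k$ span the sum-zero subspace: a configuration in which a single off-zero cycle meets each of $V_1,\dots,V_4$ once while all remaining off-zero cycles are disjoint from every $V_k$ but linked to one another has a connected diagram and a one-dimensional span inside a three-dimensional target, and nothing in your sketch excludes it. Moreover the Gauss--Manin (braid) moves you invoke replace each $\partial C_i$ by a combination of the $\partial C_j$'s and therefore preserve the span of the $v_i$; they cannot produce the missing directions. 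The idea you are missing is that the spanning comes for free from exactness downstairs: in the Mayer--Vietoris sequence~(\ref{MayerVietabajo}) for the decomposition $det(H(f_s))^{-1}(D_\xi)=(\cup A_i(\zeta))\cup B$, the connecting homomorphism $\delta_2$ surjects onto $\ker(\alpha_1)$, i.e.\ onto the sum-zero subspace; on the other hand $H_2(det(H(f_s))^{-1}(D_\xi);\ZZ)$ is generated by the classes $[\partial C_k]$ (attaching the Lefschetz thimbles recovers $\Sigma_s$, whose $H_2$ vanishes), and $\delta_2([\partial C_k])=\sum_i b_{k,i}[\psi_i]$. Hence the vectors $(b_{k,i})_i$ span the sum-zero subspace with no further geometric input, and comparing coefficients with your Step 2 formula for $\partial^{\calX}\varphi_{n-2}(z_k)$ finishes the proof. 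This is exactly how the paper concludes.
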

\begin{proof}
For each of the $a$ singular points $p_i$ of $det(H(f_s))^{-1}(0)$ there is a vanishing cycle $E_i$ which 
is a embedded $2$-sphere in $det(H(f_s))^{-1}(u)$. The parameters $s,u,\zeta$ (see Section~\ref{sectiondecomp}) 
can be choosen so that $det(H(f_s))^{-1}(u)\cap A_i(\zeta)$ is a tubular neighbourhood of $E_i$ in $det(H(f_s))^{-1}(u)$.

The sphere $\partial C_k$ can be choosen to be embedded in $det(H(f_s))^{-1}(u)$ and, after a perturbation, transverse to $E_i$ 
for any $i$. Let 
\[\iota_k:\partial C_k\hookrightarrow det(H(f_s))^{-1}(u)\]
denote the embedding. Let $b_{k,i}$ the number of intersection points of $\partial C_k$ and $E_i$.
Choosing the tubular neighbourhoods 
of $E_i$ small enough we find that $\iota_k^{-1}(A_i(\zeta))$ is a disjoint union of disks $D_{k,i,j}$ with $j\in\{1,...,b_{k,i}\}$,
and the boundary of each of them represents the generator of $H_1(B_u;\ZZ)$. By Remark~\ref{isorestriccion} the number 
$b_k:=\sum_{i}b_{k,i}$ is even: otherwise the image in $H_1(B_u;\ZZ)$ of the boundary
\[\partial (C_k\setminus (\cup_{i,j}D_{k,i,j}))\] would
be a non-zero homology class. 
We claim the following equality
\begin{equation}
\label{eqn1}
\tau([\pi^{-1}(\partial C_k)])=\sum_{i=1}^ab_{k,i}[Z_i].
\end{equation}
Let us finish the proof assuming this claim.

Any Lefschetz thimble $C_k$ gives rise to a class $[\partial C_k]\in H_2(det(H(f_s))^{-1}(D_\xi);\ZZ)$. It is easy to check that
its image by the connecting homomorphism $\delta_2$ is equal to
\begin{equation}
\label{eqn2}
\delta_2([\partial C_k])=\sum_{i=1}^ab_{k,i}[\psi_i]=\sum_{i=2}^ab_{k,i}([\psi_i]-[\psi_1])
\end{equation}
where $\psi_i$ is a generator of $H_1(\partial A_i(\zeta)\cap B;\ZZ)$ for any $i$. The first equality is by connstruction of 
the connecting homomorphism and the second is true because $\sum_{i}b_{k,i}$ is even and, hence we
have the equality $b_{k,1}=\sum_{i=2}^ab_{k,i}$ in $\ZZ_2$.

Let $\alpha_1$ be the first mapping of the $1$-row of the sequence~(\ref{MayerVietabajo}). Define the isomorphism
\[\theta:H_{n-2}(\calX,\ZZ_2)/([Z_1])\to ker(\alpha_1)\]
given by $\theta([Z_i]):=[\psi_i]-[\psi_1]$. Any element $[Z']\in H_{n-2}(\calX;\ZZ_2)/([Z_1])$ corresponds to  
an element in $ker(\alpha_1)$, which is the image by $\delta_2$ of a class $[Y]\in H_2(det(H(f_s))^{-1}(D_\xi);\ZZ)$. 
Such a class can be expressed as a sum $$[Y]=\sum_{k=1}^{\mu_0+\mu_1-a}m_k[\partial C_k].$$ The concidence of the coefficients
in the last terms of equations~(\ref{eqn1})~and~(\ref{eqn2}) give the equality $\tau([Y])=[Z']$.

Now we prove the claim.
Choose a point $x_0\in \partial C_k\setminus\cup_{i,j}D_{k,i,j}$ and choose a disk $D_0$ around it in $\partial C_k$ 
disjoint to the disks $D_{k,i,j}$. Deform the immersion $\iota_k|_{D_0}$ so that the embedding of its boundary remains fixed,
it meets $E_1$ transversely precisely at $b_k$ points, all different from $x_0$, and it is disjoint from $E_j$ for any 
$j\neq 1$. After this deformation the intersection $\iota_k|_{D_0}^{-1}(A_1(\zeta))$ consists of $b_k$ disjoint disks 
$\{D'_{k,i,j}\}_{i\in\{1,..,a\},j\in\{1,...,b_{k,i}\}}$ (we choose
the indexing to make it easy to make a bijection with the disks $D_{k,i,j}$).

Choose non-intersecting paths $\alpha_{k,i,j}$ in $D_0\setminus (\cup_{i,j}\dot{D'}_{k,i,j})$
joining $x_0$ with a point $y_{k,i,j}\in \partial D'_{k,i,j}$. Choose non-intersecting paths 
\[\beta_{k,i,j}:[0,1]\to\partial C_k\setminus(\bigcup_{k,i,j}(\alpha_{k,i,j}([0,1])\cup\dot{D'}_{k,i,j}\cup\dot{D}_{k,i,j})\]
joining $\partial D'_{k,i,j}$ with $\partial D_{k,i,j}$. For a schematic picture, see Figure~\ref{figura3}.

\begin{figure}[h]
\setlength{\unitlength}{0.00087489in}
\begin{picture}(4552,5045)

\put(3311,3701){$D_0$}
\put(2186,4241){$x_0$}
\put(2096,2891){$\beta_{k,i,j}$}
\put(1331,4971){$\alpha_{k,i,j}$}
\put(2141,3476){$D'_{k,i,j}$}
\put(1916,2261){$D_{k,i,j}$}

 \includegraphics{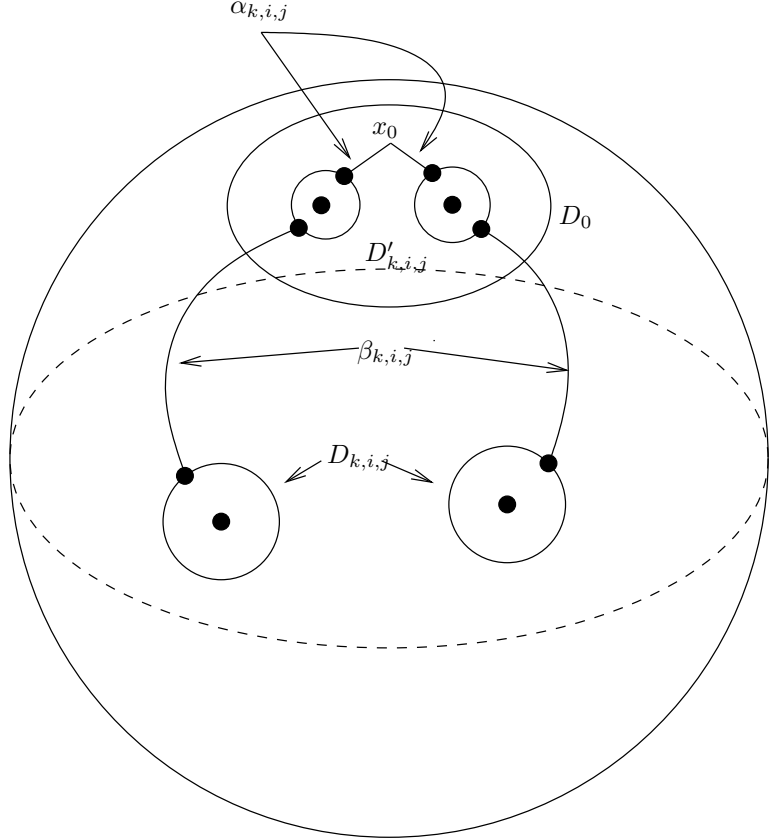}
\end{picture}

\caption{\label{figura3}The system of paths in $\partial C_k$}

\end{figure}

The complement of $\bigcup_{k,i,j}(\beta_{k,i,j}([0,1])\cup\alpha_{k,i,j}([0,1])\cup {D'}_{k,i,j}\cup {D}_{k,i,j})$
is a topological disk $G$. Since $G$ is contained in $B_u$ we can restrict the fibration~(\ref{fibpar}) to $G$ and obtain
a trivial fibration of pairs with fibre homotopic to
$(\SSS^{n-3},\SSS^{n-4})$, with $\SSS^{n-4}$ embedded as an equator. Consider a mapping 
\[\sigma:G\times \SSS^{n-3}\to\calB\]
such that $\sigma(\{g\}\times\SSS^{n-3})$ generates the $(n-3)$-homology of the fibre over $\iota(g)$ 
by the fibration~(\ref{fibpar}). Denote by $H^+$ one hemisphere of $\SSS^{n-3}$.
The restriction
\[\psi:G\times H^+\to\calB\subset\calX\]
defines a singular chain in $\calX$.

Let $Z'_{k,i,j}$ be the chain associated to $\beta_{k,i,j}$ by the procedure given in~\ref{gusanosz2}.
Adding and substracting $\SSS^{n-2}$-hemispheres $K_{k,i,j}$ and $K'_{k,i,j}$ for any $i,j$ (see the procedure in~\ref{gusanosz2}),
the chain $\partial\calC_k+\partial\psi$ is shown to be equal to a sum
\[\sum_{i,j}Z'_{i,j}+\sum_{i=1}^aY_i\]
where $Y_i$ is a closed chain contained in $\calA_i$.

The $(n-2)$-row of the sequence~(\ref{mayerviet-ABXz2}) shows that, for any $i$, any class in $H_{n-2}(\calX;\ZZ_2)$ supported by 
a chain contained in $\calA_i$ is a multiple of $[Z_1]$. On the other hand, by Lemma~\ref{cambiocamino} there exists 
$c_{i,j}\in\ZZ_2$ such that $[Z'_{i.j}]=[Z_{i}]+c_{i,j}[Z_1]$. This proves the claim.
\end{proof}

This means that the only possible values for $e$ are $0$ and $1$. We will now characterize the cases in which each value is obtained.

\begin{lema}\label{cork3e0}
 If $corank(H[f_0](0))\geq 3$, then $e=0$.
\end{lema}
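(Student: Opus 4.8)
The plan is to derive $e=0$ from the Mayer-Vietoris sequence of the decomposition \eqref{descom}. That sequence identifies $H_{n-2}(\calM;\ZZ_2)$ with $\operatorname{coker}\varphi_{n-2}$, where $\varphi_{n-2}$ takes values in $H_{n-2}(\calX;\ZZ_2)\cong\ZZ_2^a$ (the summand $\bigoplus_i H_{n-2}(\calC_i;\ZZ_2)$ being zero), and by Lemma~\ref{imagen} the image of $\varphi_{n-2}$ already surjects onto $H_{n-2}(\calX;\ZZ_2)/([Z_1])$. Hence $e\le 1$, and $e=0$ precisely when $[Z_1]\in\operatorname{im}\varphi_{n-2}$; so everything reduces to writing $[Z_1]$ as a $\ZZ_2$-combination of the classes $[\pi^{-1}(\partial C_k)]$. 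The first step is to sharpen the computation \eqref{eqn1} in the proof of Lemma~\ref{imagen}: running that argument while also retaining the coefficient of $[Z_1]$ with respect to the fixed basis $\{Z_1,\dots,Z_a\}$ of \S\ref{gusanosz2} yields, for each Lefschetz thimble $C_k$,
\[\varphi_{n-2}\bigl([\pi^{-1}(\partial C_k)]\bigr)=\epsilon_k\,[Z_1]+\sum_{i=2}^{a}b_{k,i}\,[Z_i]\ \in H_{n-2}(\calX;\ZZ_2),\]
where $b_{k,i}$ is the mod $2$ intersection number of the vanishing $2$-sphere $\partial C_k$ with $E_i$ (so that $b_{k,1}\equiv\sum_{i\ge 2}b_{k,i}$ by Remark~\ref{isorestriccion}) and $\epsilon_k\in\ZZ_2$, the ambiguities in the chains being absorbed by Lemma~\ref{cambiocamino}. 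Thus it suffices to find $m_k\in\ZZ_2$ with $\sum_k m_k b_{k,i}=0$ for all $i\ge 2$ and $\sum_k m_k\epsilon_k=1$; in particular a single thimble $C_{k_0}$ with $b_{k_0,i}=0$ for all $i$ and $\epsilon_{k_0}=1$ does the job.

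Now I invoke the hypothesis $corank(H(f_0)(O))\ge 3$. Since $c_{I,e}(f)$ is finite, $H(f)|_{\Sigma_0}$ is transverse to the corank stratification of $SM(n-3)$ away from $O$ (proof of Lemma~\ref{2icis}); hence the whole corank $\ge 2$ locus of $\overline{\Sigma[1]_0}$ is concentrated at $O$ and $\Sigma[2]_0$ is there a non-reduced point (so $a\ge 2$). Consequently, for $s$ close to $0$, all the nodes $p_1,\dots,p_a$ of $\overline{\Sigma[1]_s}$ and all the critical points of $\det(H(f_s))|_{\Sigma_s}$ collapse into an arbitrarily small ball $B(O,\nu)$ on which $f_s$ is a small deformation of a germ whose Hessian matrix has corank $\ge 3$ at $O$. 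Using the explicit geometry of this germ I would exhibit a vanishing cycle of $\det(H(f_s))|_{\Sigma_s}$ (possibly a $\ZZ_2$-combination of the basic ones) which, after isotopy, misses every $E_i$ but whose transport into $B_u$ wraps once around the corank $1$ stratum along a loop generating $\pi_1(B_u)\cong\ZZ_2$; via the identifications $\pi_1(B_0)\cong\pi_1(SM(n-3,1))\cong\ZZ_2$ of Proposition~\ref{grupofund}, this amounts to producing a $2$-cycle in $\Sigma_s\cap B(O,\nu)$ whose $H(f_s)$-image realizes the essential class of $\pi_1(SM(n-3,1))$. Taking $C_{k_0}$ to be a thimble with this vanishing cycle as boundary, the fibration of pairs \eqref{fibpar} restricted over the generating loop is by construction the chain $Z_1$; hence $\epsilon_{k_0}=1$ and $b_{k_0,i}=0$ for all $i$, so $[Z_1]\in\operatorname{im}\varphi_{n-2}$ and $e=0$.

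The main obstacle is exactly this last geometric construction: producing, in the corank $\ge 3$ local picture, a vanishing cycle of $\det(H(f_s))|_{\Sigma_s}$ that is $\pi_1$-essential relative to the balls $A_i(\zeta)$ but disjoint from all the $E_i$, and checking that its thimble contributes $\epsilon_{k_0}=1$. I expect to carry this out by reducing, as in Proposition~\ref{milnordkp} and Lemma~\ref{fkcomponents} and paralleling the monodromy computation of Lemma~\ref{monodromiaecuador} together with the branched double cover $\tilde B_0\to B_0$ of \S\ref{homologiapar}, to the topology of the fibration $MM(k\times(k-1))\to SM(k,1)$ and of the generator of $\pi_1(\RR\PP^3)$. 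This is also precisely where the hypothesis is used: when $corank(H(f_0)(O))=2$ the germ at $O$ is merely the $0$-dimensional i.c.i.s. $\Sigma[2]_0$ of \S\ref{corrango2}, no such wrapping vanishing cycle is forced, and indeed $e$ can equal $1$ there.
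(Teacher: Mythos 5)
Your first paragraph is fine: since $\bigoplus_i H_{n-3}(\pi^{-1}(\partial C_i);\ZZ_2)=0$ for $n\geq 8$, the Mayer--Vietoris sequence does identify $H_{n-2}(\calM;\ZZ_2)$ with $\operatorname{coker}\varphi_{n-2}$, and Lemma~\ref{imagen} reduces the statement to showing $[Z_1]\in\operatorname{im}\varphi_{n-2}$. The problem is the second paragraph, and it is twofold. First, the crucial geometric construction is only announced (``I would exhibit\dots''), not carried out, and you yourself flag it as the main obstacle; as written the proof is incomplete at exactly the point where the hypothesis $corank\geq 3$ must enter. Second, and more seriously, the mechanism you propose cannot work. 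A thimble boundary $\partial C_{k_0}$ that is disjoint from every $E_i$ can be pushed (inside $det(H(f_s))^{-1}(u)\setminus\cup E_i$, which deformation retracts onto $B_u$) so that $\pi^{-1}(\partial C_{k_0})$ lies entirely in $\calB$. But $\pi^{-1}(\partial C_{k_0})$ carries an \emph{integral} fundamental class (it is an orientable $\SSS^{n-4}$-bundle over $\SSS^2$), and the paper's long exact sequence of the pair $(\calB,\calB_u)$ gives $H_{n-2}(\calB;\ZZ)=0$; hence its integral class in $\calX$ vanishes, and therefore so does its mod $2$ reduction. So such a thimble automatically has $\epsilon_{k_0}=0$ as well as $b_{k_0,i}=0$, contributing nothing. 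The class $[Z_1]$ corresponds under the Gysin isomorphism to the generator of $H_1(B_u;\ZZ_2)$, i.e.\ it lives in the torsion part $\mathrm{Tor}(H_{n-3}(\calB;\ZZ),\ZZ_2)$ of $H_{n-2}(\calB;\ZZ_2)$ and is not the reduction of any integral class of $\calB$; a $2$-sphere cannot ``wrap around'' a generator of $\pi_1$. Any thimble realizing $[Z_1]$ must interact essentially with the $\calA_i$'s through the correction terms $c_{i,j}$ and the classes $Y_i$ of the proof of Lemma~\ref{imagen}, which is precisely the computation you do not perform.

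The paper's own proof is entirely different and sidesteps all of this. It uses $corank(H(f_0)(O))\geq 3$ to find a parameter $s_0$ and a point $x\in\Sigma_{s_0}$ at which $(f_{s_0})_x$ has a corank~$3$ normal form whose local Milnor fibre is computed (by fibering over $\CC^3\setminus\{0\}\simeq\SSS^5$ and suspending) to be homotopy equivalent to $\SSS^{n-1}$, so its $(n-2)$-homology vanishes. For generic $s$ near $s_0$ at least one $D(3,2)$ point $p_i$ lies in the local Milnor ball $B(x,\epsilon_0)$, so $\calA_i\subset f_s^{-1}(t)\cap B(x,\epsilon_0)$; if $e\neq 0$, Homology Splitting would force the $(n-2)$-sphere in $\calA_i$ to be nontrivial in the Milnor fibre of $f$, contradicting the vanishing of $H_{n-2}$ of the local fibre containing it. If you want to salvage your approach, the honest route is to prove directly that every $S_i$ dies in $H_{n-2}(\calM;\ZZ_2)$ (which is what the localization gives) rather than to hunt for a thimble supported away from the $E_i$.
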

\begin{proof}
Consider the unfolding
\begin{equation}
\label{unfolding11}
F(x_1,...,x_n,b,(c_{i,j})):=(G_{1,b},...,G_{n-3,b})(h_{i,j}+c_{i,j})(G_{1,b},...,G_{n-3,b})^t.
\end{equation}
given in~(\ref{unfolding1}). If $corank(H[f_0](0))\geq 3$ there exists a parameter $s_0\in S$ and a point $x\in\Sigma_{s_0}$ 
such that the germ $f_{s_0}$ at $x$ is right-equivalent to a germ of the form:
\[
 (y_1,y_2,y_3,\ldots)\cdot\left(
\begin{array}{c|c} \begin{array}{ccc} l_1 & l_2 & l_3 \\ 
l_2 & l_4 & l_5 \\ l_3 & l_5 & l_6 \end{array} & 0 \\
\hline
0 & Id \end{array}\right) \cdot \left(\begin{array}{c} y_1 \\ y_2 \\ y_3 \\ \vdots \end{array} \right),
\] where the $l_i$'s are generic linear forms and the $y_i$'s are variables. 
The Milnor fibre of such germ function is the suspension of the Milnor fibre $M$ of

\[
 (y_1,y_2,y_3)\cdot\left(
\begin{array}{ccc} l_1 & l_2 & l_3 \\ 
l_2 & l_4 & l_5 \\ l_3 & l_5 & l_6 \end{array}\right) \cdot \left(\begin{array}{c} y_1 \\ y_2 \\ y_3 \end{array} \right),
\] 
and this one can be computed by projecting to the variables $(y_1,y_2,y_3)$. This projection is a fibration over 
$\CC^3\setminus \{0\}\approx\SSS^5$ whose fibre is contractible for being given by the solutions of a system of linear equations.
So $M$ has the homotopy type of $\SSS^5$ and $H_4(M)=0$. The general case is a suspension of this one.
Hence the Milnor fibre of the germ $f_{s_0}$ at $x$ is homotopic to $\SSS^{n-1}$ and its $(n-2)$-homology vanishes.

Since $\Sigma_{s_0}$ is smooth at $x$ its versal deformation is trivial. Hence the unfolding given by~(\ref{unfolding1}) for the 
germ $(f_{s_0})_x$ is of the form:
\begin{equation}
\label{unfoldingaux}
F(x_1,...,x_n,(c_{i,j})):=(G_{1,s_0},...,G_{n-3,s_0})(h_{i,j}+c_{i,j})(G_{1,s_0},...,G_{n-3,s_0})^t.
\end{equation}
with $(c_{i,j})\in SM(n-3)$. Observe that this unfolding can be obtained by pullback and localising near $x$ from the unfolding
~(\ref{unfolding11}).
Let $B(x,\epsilon_0)$ be a Milnor ball for $(f_{s_0})_x$ contained in the Milnor ball $B_\epsilon$ of $f$. If $s$ is generic and
very close to $s_0$ and $t$ is small enough then
\begin{equation}
\label{inclusionmilnorfibres}
\iota:f_{s}^{-1}(t)\cap B(x,\epsilon_0)\hookrightarrow f_{s}^{-1}(t)\cap B_\epsilon
\end{equation}
is an inclussion of the Milnor fibre of $(f_{s_0})_x$ into the Milnor fibre of $f$.

Since $corank(H(f_{s_0}))(x)\geq 2$, if $s$ is close to $s_0$ there exists at least a point $p_i$ of $\Sigma[2]_s$ contained in 
$B(x,\epsilon_0)$. If $e\neq 0$, that is, if $H_{n-2}(\calM;\ZZ_2)\neq 0$ then, by Proposition~\ref{homsplit} with coefficients in 
$\ZZ_2$ there is a $\SSS^{n-2}$ in $\calA_i\subset f_s^{-1}(t)\cap B_\epsilon$ representing a non-trivial homology class 
in the Milnor fibre of $f$. But this is impossible because $\calA_i$ is already contained in $f_s^{-1}(t)\cap B(x,\epsilon_0)$
and in this space there is no non-trivial $(n-2)$-homology.
\end{proof}

Now we will see that, in the case where $corank(H[f_0](0))=2$, the number $e$ turns out to be $1$. 
Recall that $\calA_i$ is homotopic to $\SSS^{n-2}$, denote by $Z$ the generator of $H_{n-2}(\calA_i;\ZZ)$. 
Since we have an inclusion $i:\calA_i\hookrightarrow\calM$, we need to check that $i_*(Z)\neq 0$.

\begin{lema}\label{casosimplee1}
 If $f$ is of the form
\[
 f=(g_1,g_2)\cdot \left(\begin{array}{cc} h_{1,1} & h_{1,2} \\ h_{1,2} & h_{2,2} \end{array}\right) \cdot \left(\begin{array}{c}g_1 \\ g_2 \end{array}\right)
\]
then $i_*(Z)\neq 0$

\end{lema}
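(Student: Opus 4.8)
The plan is to reduce the statement to $H_{n-2}(\calM;\ZZ_2)\ne 0$ (equivalently $e=1$) and then to establish this by detecting the $D(3,2)$-sphere directly in an explicit model. After one further suspension we may assume $n$ is large, so all the Mayer--Vietoris computations of Sections~\ref{seccalx}--\ref{seccalm} apply. The first task is to locate $i_*(Z)$ inside those computations. The $(n-2)$-sphere $S_i$ generating $H_{n-2}(\calA_i;\ZZ_2)$ is a cycle supported in $\bigcup_j\calA_j$, hence it is killed by the Mayer--Vietoris connecting homomorphism $H_{n-2}(\calX;\ZZ_2)\to\bigoplus_j H_{n-3}(\calA_j\cap\calB;\ZZ_2)$; since $\iota_{n-3}$ is onto, this connecting homomorphism has one-dimensional kernel, and as $G_1\subset\calB$ is also killed by it, that kernel is $\ZZ_2[Z_1]$. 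Using Lemma~\ref{postpuesto} and the two Mayer--Vietoris sequences one checks that the image of $S_i$ in $H_{n-2}(\calX;\ZZ_2)$ is non-zero, so it must equal $[Z_1]$. By Lemma~\ref{imagen} we have $\mathrm{im}\,\varphi_{n-2}+\ZZ_2[Z_1]=H_{n-2}(\calX;\ZZ_2)$ and $H_{n-2}(\calM;\ZZ_2)=\mathrm{coker}\,\varphi_{n-2}$, so $e\le 1$ and $i_*(Z)\ne 0$ in $H_{n-2}(\calM;\ZZ_2)$ exactly when $[Z_1]\notin\mathrm{im}\,\varphi_{n-2}$, i.e. when $e=1$. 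Finally, the inclusions $\calA_i\subset\calM\subset\mathbf F_f=f_s^{-1}(t_0)\cap B_\epsilon$ together with the isomorphism $H_{n-2}(\mathbf F_f;\ZZ_2)\cong H_{n-2}(\calM;\ZZ_2)$ furnished by Proposition~\ref{homsplit} (here $T\simeq\Sigma_s$ is a bouquet of $3$-spheres, so its homology vanishes in the relevant degrees) reduce everything to showing that the vanishing $(n-2)$-sphere $Z$ of the $D(3,2)$ point $p_i$ has non-trivial image in $H_{n-2}(\mathbf F_f;\ZZ_2)$.

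I would carry this out in the explicit normal form. Near $p_i$ the function $f_s$ is, after a coordinate change, $(y_1,y_2)\,M(x)\,(y_1,y_2)^t+\sum_{m\ge 3}y_m^2$, where $M=H(f_s)$, the three independent entries of $M$ are local coordinates on the smooth $3$-manifold $\Sigma_s$, $M(p_i)=0$, and $\overline{\Sigma[1]_s}$ is locally $\{\det M=0\}$; the sphere $Z$ is the $(n-5)$-fold suspension of the $3$-sphere of $(y_1,y_2)$-directions. To detect it I would build its Lefschetz-type companion: choose a generic small complex disk $\Delta\subset\Sigma_s$ through $p_i$ and look at the part $\pi^{-1}(\Delta)\cap\calM$ of $\calM$ over it, which fibres over $\Delta$ with fibre $Z\simeq\SSS^{n-2}$ over the centre and the Milnor fibre of a Morse or $D(1,1)$ singularity (so $\SSS^{n-4}$ or $\SSS^{n-3}$) elsewhere. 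Sweeping out the vanishing data of this degenerating family over $\Delta$, and capping off with the cells described in Lemma~\ref{ecuador} and Lemma~\ref{monodromiaecuador}, should produce an $n$-dimensional chain $W\subset\calM$ with $\partial W\subset\partial\calM$, and a transversality computation in the normal form should show that $W$ meets $Z$ transversally in a single point. Since such a relative $n$-cycle pairs non-degenerately with $H_{n-2}$ under the intersection form of the $(2n-2)$-manifold $\calM$, this forces $i_*(Z)\ne 0$, hence $e=1$.

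The step I expect to be the real obstacle is exactly the one this construction is meant to overcome. Lemma~\ref{imagen} determines $\mathrm{im}\,\varphi_{n-2}$ only modulo $[Z_1]$: each class $[\pi^{-1}(\partial C_k)]$ equals $\sum_i b_{k,i}[Z_i]$ up to an uncontrolled multiple of $[Z_1]$, with $\sum_i b_{k,i}$ even. At the purely homological level this leaves open whether the Lefschetz thimbles $\calC_k$ might conspire to kill $[Z_1]$, so genuinely geometric input is needed, and the transversal intersection of $W$ with $Z$ is meant to supply it. A possible fallback is an Euler characteristic count: computing $\chi(\calM)$ from the stratification of $\pi\colon\calM\to\Sigma_s$ by the corank of $H(f_s)$ (fibres $\SSS^{n-4},\SSS^{n-3},\SSS^{n-2}$, with $\chi(\Sigma_s)=1-\mu_0$ and $\chi(\overline{\Sigma[1]_s})=1+\mu_1-a$) and comparing with the ranks of the homology groups of $\calM$ already obtained should leave $e=1$ as the only option; this, however, needs the extra step of pinning down $H_2(\calM;\ZZ_2)$.

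Conceptually, the contrast with Lemma~\ref{cork3e0} is the point: when $\mathrm{corank}(H(f_0)(O))=2$ the local model at $p_i$ is $D(3,2)$ itself, whose Milnor fibre is $\SSS^{n-2}=\calA_i$, so nothing forces $Z$ to bound; whereas a point of corank $\ge 3$ engulfs $\calA_i$ inside a local Milnor fibre with vanishing $(n-2)$-homology. Thus the hard analytic content of the lemma is the verification that the $D(3,2)$-vanishing sphere survives the attaching of $\bigcup_k\calC_k$, which is where the explicit transversality argument (or the precise numerical coincidence in the Euler characteristic computation) has to be carried through.
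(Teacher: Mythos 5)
Your homological reductions in the first paragraph are essentially sound and consistent with how the paper itself frames the problem: Lemma~\ref{imagen} gives $e\le 1$, the class of each $S_i$ in $H_{n-2}(\calX;\ZZ_2)$ equals $[Z_1]$, and the whole question is whether $[Z_1]$ survives the attachment of the pieces $\calC_k$. But the step you yourself call ``the real obstacle'' is left genuinely open, and neither of your two mechanisms for closing it works as written. The Euler-characteristic fallback is provably hopeless: $e$ enters the ranks of $H_{n-1}(\calM;\ZZ_2)$ and $H_{n-2}(\calM;\ZZ_2)$ as $\mu_0+2\mu_1-4a+1+e$ and $e$ respectively, in adjacent degrees, so it cancels identically in $\chi(\calM)$; the Euler characteristic is determined by the stratification alone and cannot distinguish $e=0$ from $e=1$. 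The main route --- a relative $n$-cycle $W$ with $W\cdot Z$ odd, detected by Lefschetz duality --- is a legitimate strategy, but you never construct $W$: your description of $\pi^{-1}(\Delta)\cap\calM$ is off (the fibre of $\pi|_{\calM}$ over the single point $p_i$ is the Milnor fibre of $\sum_{j\ge 3}y_j^2$, a sphere of dimension $n-6$, not $\SSS^{n-2}$; it is $\calA_i$, the part of $\calM$ over a whole $3$-ball around $p_i$, that is homotopic to $\SSS^{n-2}$), the dimension count for the swept-out chain is not justified, and whether its boundary really lands in $\partial\calM$ is exactly the kind of thing that can fail. Since the existence of such a $W$ is essentially equivalent to the conclusion, asserting that the construction ``should produce'' it is circular.

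The missing idea is an explicit dual object, and the paper produces it cohomologically rather than homologically, in three lines: since $f=(g_1,g_2)\,(h_{i,j})\,(g_1,g_2)^t$, the map $\phi=(g_1,g_2)$ sends the Milnor fibre $f^{-1}(t)$, $t\neq 0$, into $\CC^2\setminus\{0\}$; pulling back a closed $3$-form $\omega$ generating the degree-$3$ de Rham cohomology of $\CC^2\setminus\{0\}$ gives a closed form $\Omega=\phi^{*}\omega$ defined on the whole of $\mathbf{F}_f$, hence on $\calM\supset\calA_i$, and $\int_{Z}\Omega=\pm\int_{\SSS^3}\omega\neq 0$ because $\phi$ restricted to the $D(3,2)$ Milnor fibre is, up to homotopy, the projection with contractible fibres onto $\CC^2\setminus\{0\}\simeq\SSS^3$ of Proposition~\ref{milnordkp}. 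Your hypothetical $W$ is morally the Lefschetz dual of this form (a generic preimage $\phi^{-1}(\mathrm{ray})$), so your instinct about what has to be built is right; but the de Rham formulation makes both the global definedness and the nonzero pairing immediate, and sidesteps every transversality and boundary issue that your sketch leaves unresolved.
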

\begin{proof}
 Let $\omega$ be a closed differential form defined in $\CC^2\setminus\{0\}$ such that $\int_{\SSS^3}\omega\neq 0$ 
for a sphere $\SSS^3$ around the origin in $\CC^2$. Consider the map
\[\phi:\CC^n\to\CC^2\]
defined by $\phi:=(g_1,g_2)$.

Then $\Omega:=\phi^\star \omega$ is a closed differential form defined over the Milnor fibre of $f$. The change of variables formula gives 
the inequality $\int_{i_\star (Z)}\phi^\star \omega\neq 0$
\end{proof}

Now let's generalize this argument for the case where the corank is two, but the dimension is higher:

\begin{lema}
\label{cork2e1}
If $corank(H[f_0](0))=2$, then $e=1$.
\end{lema}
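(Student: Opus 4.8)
The plan is to generalise the differential-form argument of Lemma~\ref{casosimplee1}. First I would reduce to producing a single non-trivial real homology class. We already know $e\leq 1$ (the discussion after Lemma~\ref{imagen}), so it suffices to check that $H_{n-2}(\calM;\ZZ)$ has positive rank, since then $H_{n-2}(\calM;\ZZ_2)\neq 0$ by universal coefficients and hence $e=1$. By Proposition~\ref{homsplit} one has $H_{n-2}(\mathbf{F}_f;\ZZ)\cong H_{n-1}(T,\calM;\ZZ)$, and since $T$ is homotopy equivalent to the Milnor fibre $\Sigma_s$ of a $3$-dimensional i.c.i.s.\ it has the homotopy type of a $3$-complex, so $H_{n-1}(T;\ZZ)=H_{n-2}(T;\ZZ)=0$ (recall $n\geq 8$); the connecting homomorphism then gives $H_{n-1}(T,\calM;\ZZ)\cong H_{n-2}(\calM;\ZZ)$, and unwinding the excisions of Section~\ref{sechomsplit} shows that this composite isomorphism is induced by the inclusion $\calM\hookrightarrow\mathbf{F}_f$. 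Thus it is enough to produce a closed real form $\Omega$ on $\mathbf{F}_f$ and an $(n-2)$-cycle representing $i_\star(Z)$, with $Z$ a generator of $H_{n-2}(\calA_i;\ZZ)$ for some corank-$2$ point $p_i$, such that $\int_{i_\star(Z)}\Omega\neq 0$.

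Next I would build $\Omega$ from the join description of the $D(3,2)$ Milnor fibre implicit in Proposition~\ref{milnordkp}. After changing generators $f$ has the form~(\ref{formaespecial}), $f=(g_1,g_2)(h_{i,j})(g_1,g_2)^t+\sum_{i=3}^{n-3}g_i^2$, and near $p_i$ the fibre $\calA_i$ is the $(n-5)$-fold suspension of the $\CC^5$-hypersurface $Y=\{(g_1,g_2)(h_{i,j})(g_1,g_2)^t=1\}$; that is, $\calA_i\simeq\SSS^{n-6}\ast\SSS^3$, where the $\SSS^3$-factor is detected by $\phi:=(g_1,g_2)$ exactly as in Lemma~\ref{casosimplee1} and the $\SSS^{n-6}$-factor is the vanishing cycle of the ``square part'' $\sum_{i=3}^{n-3}g_i^2$. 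Correspondingly $Z$ is the join of the two fundamental classes, and a dual top form on $\calA_i$ is $\phi^\star\omega\wedge\psi^\star\nu\wedge d\theta$, with $\omega$ the form on $\CC^2\setminus\{0\}$ of Lemma~\ref{casosimplee1}, $\psi:=(g_3,\ldots,g_{n-3})$, $\nu$ the solid-angle $(n-6)$-form on $\RR^{n-5}\setminus\{0\}$ pulled back through $z\mapsto\mathrm{Re}(z)$, and $d\theta$ a $1$-form measuring the join parameter (monitored by $(g_1,g_2)(h_{i,j})(g_1,g_2)^t/t$). This expression is defined and closed on the complement in $\mathbf{F}_f$ of the loci where $\phi$, $\psi$, or the join parameter degenerates; one then checks, using that the two factor spheres of $\SSS^{n-6}\ast\SSS^3=\SSS^{n-2}$ are linked with linking number one, that $i_\star(Z)$ is representable by a cycle avoiding these loci, and that the resulting period factors as a non-zero product of the $\SSS^3$-period of Lemma~\ref{casosimplee1}, the $\SSS^{n-6}$-period of $\nu$, and the integral of $d\theta$ over the join interval. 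Hence $i_\star(Z)\neq 0$ and $e=1$.

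The main obstacle is precisely this globalisation: $\phi^\star\omega$ and $\psi^\star\nu$ are undefined along $\{g_1=g_2=0\}$ and along the locus where $\mathrm{Re}(g_3,\ldots,g_{n-3})$ vanishes, and the ``pure'' join representatives of $Z$ inside $\calA_i$ meet exactly these loci; moving $i_\star(Z)$ off them cannot be done inside $\calA_i$ alone (the join $\SSS^{n-6}\ast\SSS^3$ with its two factor spheres removed has no homology in degree $n-2$), so the argument must genuinely take place in the larger Milnor fibre $\mathbf{F}_f$, where one must control the period. An alternative route that avoids constructing a high-degree form is a descending induction on $n$: writing $f=\big((g_1,g_2)(h_{i,j})(g_1,g_2)^t+\sum_{i=3}^{n-4}g_i^2\big)+g_{n-3}^2$ and slicing by a generic $\{g_{n-3}=\varepsilon\}$, which is disjoint from the critical locus of $f$, the restriction is again a corank-$2$ function of finite extended codimension over a (now smooth) $3$-dimensional i.c.i.s.\ in $\CC^{n-1}$; a Thom--Sebastiani/suspension comparison should then identify the relevant vanishing cycle of $f$ with the suspension of the one for the restriction, reducing to the case $n=5$ of Lemma~\ref{casosimplee1}. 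Verifying that this suspension isomorphism holds globally on the Milnor fibre — and not merely near $p_i$ — is then the delicate point.
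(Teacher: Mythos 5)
Your overall strategy---detecting the vanishing cycle of a corank-$2$ point by integrating a closed differential form against it, generalising Lemma~\ref{casosimplee1}---is exactly the one the paper uses, and your opening reduction (it suffices to exhibit an $(n-2)$-cycle $Z$ supported near a corank-$2$ point together with a closed real $(n-2)$-form defined on \emph{all} of the Milnor fibre whose period over $Z$ is non-zero) is sound. But the proof is not complete: the one step that carries all the content, namely the construction of a globally defined closed form, is precisely the step you leave open. Your candidate $\phi^\star\omega\wedge\psi^\star\nu\wedge d\theta$ is undefined along $\{g_1=g_2=0\}$, along the vanishing locus of $\mathrm{Re}(g_3,\ldots,g_{n-3})$, and where the join parameter degenerates; as you yourself observe, the representatives of $Z$ cannot be pushed off these loci inside $\calA_i$, and you do not explain how to control the period in the larger fibre. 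The alternative slicing/Thom--Sebastiani route is likewise left with an unverified ``delicate point''. As written, neither route closes.

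The missing idea is a cutoff, not an angular form. The paper does not attempt a form dual to the full join $\SSS^{n-6}\ast\SSS^{3}$; after moving to a parameter $s$ of the enlarged unfolding~(\ref{unfoldinggrande}) at which $h_{1,1,s},h_{1,2,s},h_{2,2,s},G_{1,s},\ldots,G_{n-3,s}$ form a coordinate system (so there is a single, completely explicit vanishing sphere $Z$), it takes
\[
\Omega:=\psi^*\omega\wedge\beta\bigl(G_{3,s}^2+\cdots+G_{n-3,s}^2-\delta'\bigr)\,dG_{3,s}\wedge\cdots\wedge dG_{n-3,s},
\]
with $\psi=(G_{1,s},G_{2,s})$, $\omega$ a generator of the degree-$3$ de Rham cohomology of $\CC^2\setminus\{0\}$, and $\beta$ a bump function vanishing identically near $0$. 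On the fibre $F_s^{-1}(\delta')$ the locus $\{G_{1,s}=G_{2,s}=0\}$, where $\psi^*\omega$ blows up, is contained in $\{G_{3,s}^2+\cdots+G_{n-3,s}^2=\delta'\}$, where the argument of $\beta$ vanishes; hence $\Omega$ extends by zero over the bad set and is defined on the whole Milnor fibre. Closedness is automatic because $d\bigl(\beta(\textstyle\sum G_{i,s}^2-\delta')\bigr)$ is a combination of the $dG_{i,s}$ with $i\geq 3$ and dies against $dG_{3,s}\wedge\cdots\wedge dG_{n-3,s}$. The period over the explicit parametrisation of $Z$ as a family of rescaled copies of the $3$-cycle $Z'$ over a real ball then factors by Fubini into the $\SSS^3$-period of $\omega$ times $\int\beta\neq 0$. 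In short: the radial directions are handled by globally defined \emph{exact} $1$-forms damped by a cutoff that kills the only genuine singularity, so none of the degeneracy loci you worry about needs to be avoided. Replacing your $\psi^\star\nu\wedge d\theta$ by $\beta(\cdot)\,dg_3\wedge\cdots\wedge dg_{n-3}$ and redoing the period computation would repair the argument.
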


\begin{proof}
We may assume (see~\ref{corrango2}) that $f$ is of the form 
\[
 f=(g_1,g_2)\cdot \left(\begin{array}{cc} h_{1,1} & h_{1,2} \\ h_{1,2} & h_{2,2} \end{array}\right) \cdot \left(\begin{array}{c}g_1 \\ g_2 \end{array}\right) +g_3^2+\cdots +g_{n-3}^2.
\]

We consider the unfolding $F$ defined in~(\ref{unfoldinggrande}). Clearly there are parameters $s$ such that the functions
$(h_{1,1,s},h_{1,2,s},h_{2,2,s},G_{1,s},\ldots ,G_{n-3,s})$ vanish at the origin and form a holomorphic coordinate system around 
it. In this case the local Milnor fibre of the deformed function $F_s$ at the origin has the homotopy type of a $(n-2)$-sphere. 

Let $\epsilon$ and $\delta$ be radii for the Milnor fibration of $f$. Let $\epsilon'$ and $\delta'$ be radii for the Milnor fibration of
$f_s$ at the origin. By Theorem~\ref{teounfolding1} we have that $F_s^{-1}(\delta')\cap B_\epsilon$ is diffeomorphic to the Milnor fibre of $f$.
Let $Z$ be a cycle in the local Milnor fibre $F_s^{-1}(\delta')\cap B_{\epsilon'}$ generating the $(n-2)$-homology group.
In order to show that $e=1$ it is enough to show that the homology class $[Z]$ is nonzero considered in the bigger space
$F_s^{-1}(\delta')\cap B_\epsilon$. For this it suffices to find a closed $(n-2)$-differential form $\Omega$, defined in 
$F_s^{-1}(\delta')\cap B_\epsilon$ such that $\int_{Z}\Omega\neq 0$.

In order to define such a form, choose a positive 
function $\beta:\CC\mapsto \RR\subseteq \CC$ such that $\beta|_{D(0,\eta/2)}\equiv 0$ and 
$\beta|_{\CC\setminus D(0,\eta)}\equiv 1$ for a sufficiently small radius $\eta$. 
Now take $\omega$ a closed $3$-form in $\CC^2\setminus \{0\}$
that generates the de Rham cohomology in degree $3$.

We have the function
\[
\begin{array}{ccc}
\psi:\CC^n\setminus V(G_{1,s},G_{2,s}) & \longrightarrow  & \CC^2\setminus\{0\} \\
x & \longmapsto & (G_{1,s}(x),G_{2,s}(x))
\end{array}.
\]
Define
\[
 \Omega:=\psi^*\omega\wedge \beta(G_{3,s}^2+\cdots +G_{n-3,t}^2-\delta')dG_{3,s}\wedge\cdots\wedge dG_{n-3,s}.
\]

Let us check that $\Omega$ is defined in all $F_s^{-1}(\delta')\cap B_{\epsilon}$: the form $\psi^\star\omega$ is only defined in 
$\CC^n\setminus V(G_{1,s},G_{2,s})$, but the factor $\beta(G_{3,s}^2-\cdots -G_{n-3,s}^2-\delta')$ is identically zero when 
$G_{3,s}^2-\cdots -G_{n-3,s}^2-\delta'$ is small enough.

In order to check that $\Omega$ is closed notice that since $\omega$ is closed, so is $\psi^*\omega$. Hence it is sufficient to
show that
\[\beta(G_{3,s}^2+\cdots +G_{n-3,t}^2-\delta')dG_{3,s}\wedge\cdots\wedge dG_{n-3,s}\]
is closed. A chain rule argument shows the equality:
\[d\beta(G_{3,s}^2+\cdots +G_{n-3,t}^2-\delta')=\frac{\partial \beta}{\partial z}(G_{3,s}^2-\cdots -G_{n-3,s}^2)\sum_{i=3}^{n-3}2G_{i,s}dG_{i,s}\]
which means that
\[d\Omega=\psi^\star w\wedge \frac{\partial \beta}{\partial z}(G_{3,s}^2+\cdots +G_{n-3,t}^2-\delta')(\sum_{i=3}^{n-3}2G_{i,s}dG_{i,s})\wedge dG_{3,s}\wedge\cdots\wedge dG_{n-3,s}=0.
\]

Finally we will check that the form $\Omega$ integrated against the cycle $Z$ gives a non-zero result. We start by
giving an explicit description of $Z$. Let $Z'$ be the cycle that generates the $3$-homology of 
$$
\{(G_{1,s},G_{2,s})(h_{i,j,s})(G_{1,s},G_{2,s})^t=\delta'\}\cap B_{\epsilon'}.
$$
Define the function 
\[
\begin{array}{ccc}
 \alpha_w:\CC^5 & \longrightarrow  & \CC^5 \\
(x_1,x_2,x_3,x_4,x_5) & \longmapsto & \sqrt[3]{\frac{w}{3}}(x_1,x_2,x_3,x_4,x_5)
\end{array}.
\]
The cycle $Z$ admits the following parametrisation: since the functions
\[(h_{1,1,s},h_{1,2,s},h_{2,2,s},G_{1,s},\ldots ,G_{n-3,s})\]
form a holomorphic coordinate system at the origin the 
vanishing of the first $5$ of them defines a germ $(M,O)$ of $(n-5)$-dimensional complex manifold at the origin. 
Let $B(0,\sqrt{\delta'})$ denote the ball of radius $\sqrt{\delta'}$ centered at the origin of $\RR^{n-5}$, being 
$\RR^{n-5}$ the real locus of $(M,O)$. The parametrisation is given by
\[
\begin{array}{ccc}
Z'\times B(0,\sqrt{\delta'}) & \longrightarrow  & Z\\
(p,y_3,\ldots y_{n-3}) & \longmapsto & (\alpha_{\delta'-y_3^2-\cdots -y_{n-3}^2}(p),y_3,\ldots y_{n-3})
\end{array}.
\]

Now we compute $\int_{Z}\Omega$ using Fubini's theorem:
\[
 \int_{Z}\Omega=\int_{Z'\times B(0,\sqrt{\delta'})}\psi^* \omega\wedge \beta(G_{3,s}^2+\cdots+G_{n-3,s}^2-\delta')dG_{3,s}\wedge\cdots\wedge G_{n-3,s}=
\]
\[
=\int_{B(0,\sqrt{\delta'})} \beta(G_{3,s}^2-\cdots+G_{n-3,s}^2-\delta')\cdot\int_{Z'}\alpha_{G_{3,s}^2+\cdots+G_{n-3,s}^2-\delta'}^*\psi^*\omega dG_{3,s}\wedge\cdots\wedge dG_{n-3,s}=
\]
\[=\int_{B(0,\sqrt{\delta'})}\beta(G_{3,s}^2+\cdots+G_{n-3,\delta'}^2-\delta')dG_{3,s}\wedge\cdots\wedge dG_{n-3,s}
\]
which is nonzero.
\end{proof}

Now we can prove easily Lemma~\ref{postpuesto} using an example:
\begin{proof}[Proof of Lemma~\ref{postpuesto}]
Since we are with coefficients in $\ZZ_2$, if the mapping is not an isomorphism then it is identically zero.

The function $f:\CC^5\to\CC$ given by
\[
 f=(x_1,x_2)\cdot \left(\begin{array}{cc} x_3 & x_4 \\ x_4 & x_3-x_5^2 \end{array}\right) \cdot \left(\begin{array}{c}g_1 \\ g_2 \end{array}\right)
\]
has finite extended codimension with respect to $(x_1,x_2)$. By a procedure simmilar to the one we have used to compute
the homotopy type of the Milnor fibre of the $D(3,2)$ singularity, in~\cite{FdB3} we have shown that the Milnor fibre
of $f$ at the origin is homotopy equivalent to $\SSS^3$.

If we take a generic parameter $s$ of the unfolding $F$ 
associated to $f$ in Section~\ref{sectionunfoldings} we see that $F_s$ has no Morse points outside $\Sigma_s=\Sigma_0$,
there are precisely $2$ points of type $D(3,2)$, and the Milnor number of the i.c.i.s. $\Sigma_0\cap\{det(H(f))=0\}$ is 
equal to $3$. Let us assume that the mapping in the statement of Lemma~\ref{postpuesto} is identically zero.
In this case the previous long exact sequences can be used to compute the homology of the Milnor fibre of $f$, and they
give that the $4$-homology group is non-zero. This gives a contradiction.
\end{proof}

\subsection{Integral coefficients}
From the integer homology of $\calX$, it is easy to see by the Mayer-Vietoris sequence that $H_k(\calM;\ZZ)=H_k(\calX;\ZZ)$ for $k\neq n-1,n-2$. 

On the other hand, the group $H_{n-1}(\calM;\ZZ)$ is torsion free since $\calM$ is a $(n-1)$-dimensional Stein
space. By the Universal Coefficients Theorem and our computation of homology with coeffficients in $\ZZ_2$, it is 
easily obtained that $H_{n-2}(\calM;\ZZ)$ has no 2-torsion: as we have seen before, when $e=1$, the $\ZZ_2$ component of $H_{n-2}(\calM;\ZZ_2)$ is represented by a torsion free class (its integral against a closed form is non-zero), and hence it comes from a $\ZZ$ component in $H_{n-2}(\calM;\ZZ)$.

Summarising, the Mayer-Vietoris sequence with coefficients in $\ZZ$ is as follows:
\[\label{mayervietMzeta}
\xymatrix@R=1ex@C=3ex{{\begin{array}{c} \\  \\ \oplus_i H_{n-1}(\pi^{-1}(\partial C_i);\ZZ) \\ \rotatebox{90}{$\cong$} \\ 0 \end{array}}  \ar@{^{(}->}[r] & {  \begin{array}{c@{}c@{}c} {} & & \\ & & \\  \oplus _i H_{n-1}(\calC_i;\ZZ) & \bigoplus  & H_{n-1}(\calX;\ZZ)\\ \rotatebox{90}{$\cong$} & & \rotatebox{90}{$\cong$} \\ 0 & & \ZZ^{\mu_1-2a+1}\end{array}  } \ar[r] & {\begin{array}{c} \\ \\ H_{n-1}(\calM;\ZZ)\\ \rotatebox{90}{$\cong$} \\ \ZZ^{\mu_0+2\mu_1-4a+1+e} \end{array}} \ar `d[l] `l[dll] [dll]+<0ex,3ex>*{} \\
{\begin{array}{c} \\ \\ \oplus_i H_{n-2}(\pi^{-1}(\partial C_i);\ZZ) \\ \rotatebox{90}{$\cong$}\\ \ZZ^{\mu_0+\mu_1-a} \end{array}}  \ar[r]^-{\varphi_{n-2}} & {  \begin{array}{c@{}c@{}c} & & \\{} & & \\ \oplus_i H_{n-2}(\calC_i;\ZZ) & \bigoplus & H_{n-2}(\calX;\ZZ) \\ \rotatebox{90}{$\cong$} & & \rotatebox{90}{$\cong$}\\ 0 & &\ZZ^a \end{array}  } \ar[r] & {\begin{array}{c}\\ \\ H_{n-2}(\calM;\ZZ)\\ \rotatebox{90}{$\cong$} \\ \ZZ_2^{e}\oplus T \end{array}} \ar `d[l] `l[dll] [dll]+<0ex,3ex>*{} \\
{\begin{array}{c} \\ \\ \oplus_i H_{n-3}(\pi^{-1}(\partial C_i);\ZZ)\\ \rotatebox{90}{$\cong$} \\ 0 \end{array}}  \ar[r] & {  \begin{array}{c@{}c@{}c} & &  {}\\ & & \\ \oplus_i H_{n-3}(\calC_i;\ZZ) & \bigoplus & H_{n-3}(\calX;\ZZ) \\ \rotatebox{90}{$\cong$} & & \rotatebox{90}{$\cong$}\\ 0 &  & 0 \end{array}  } \ar[r] & {\begin{array}{c}\\ \\ H_{n-3}(\calM;\ZZ)\\ \rotatebox{90}{$\cong$} \\ 0 \end{array}} \ar `d[l] `l[dll] [dll]+<0ex,3ex>*{} \\
{\begin{array}{c} \\ \\ \oplus_i H_{n-4}(\pi^{-1}(\partial C_i);\ZZ)\\ \rotatebox{90}{$\cong$} \\ \ZZ^{\mu_0+\mu_1-a} \end{array}}  \ar[r] & {  \begin{array}{c@{}c@{}c} & &  {}\\ & & \\ \oplus_i H_{n-4}(\calC_i;\ZZ) & \bigoplus & H_{n-4}(\calX;\ZZ) \\ \rotatebox{90}{$\cong$} & & \rotatebox{90}{$\cong$}\\ \ZZ^{\mu_0+\mu_1-a} &  & 0 \end{array}  } \ar@{>>}[r] & {\begin{array}{c}\\ \\ H_{n-4}(\calM;\ZZ)\\ \rotatebox{90}{$\cong$} \\ 0 \end{array}}}
\]
where $T$ is a torsion group without $2$-torsion. We prove now that $T=0$. We have to deal separatedly with the cases
$corank(H(f))\geq 3$ and $corank(H(f))=2$.

\begin{lema}\label{t0e0}
If $corank(H(f)(O))\geq 3$, then $T=0$.
\end{lema}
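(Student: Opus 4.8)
The plan is to reduce the statement to a vanishing theorem about the Milnor fibre itself and then prove that vanishing by combining the localisation idea already used for Lemma~\ref{cork3e0} with the monodromy of Lemma~\ref{connectedcover}. Precisely: by Proposition~\ref{homsplit} (and $H_{n-1}(T,\calM;\ZZ)\cong H_{n-2}(\calM;\ZZ)$ because $T\simeq\Sigma_s$ is a $3$‑dimensional bouquet and $n\geq 8$) one has $H_{n-2}(\mathbf{F}_f;\ZZ)\cong H_{n-2}(\calM;\ZZ)$, an isomorphism induced by the inclusion $\calM\hookrightarrow\mathbf{F}_f$ (this can be read off from the proof of Proposition~\ref{homsplit}, where the isomorphism is obtained by coning off a cycle of $\calM$ inside the tube $T$). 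Since $\mathrm{corank}(H(f)(O))\geq 3$ gives $e=0$ by Lemma~\ref{cork3e0}, we have $H_{n-2}(\calM;\ZZ)=T$, so it suffices to show $H_{n-2}(\mathbf{F}_f;\ZZ)=0$.

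First I would record that $H_{n-2}(\mathbf{F}_f;\ZZ)$ is generated by the classes of the spheres $S_i\subset\calA_i$: by the Mayer–Vietoris sequence of the decomposition $\calM'=\calX\cup\bigcup_i\calC_i$ of Section~\ref{seccalm}, with the $\calC_i$ homotopy spheres of dimension $n-4$ (so $H_{n-2}(\calC_i;\ZZ)=0$) and $H_{n-3}(\pi^{-1}(\partial C_i);\ZZ)=0$ for $n\geq 8$, the map $H_{n-2}(\calX;\ZZ)\to H_{n-2}(\calM;\ZZ)$ is onto, and $H_{n-2}(\calX;\ZZ)=\bigoplus_i\ZZ[S_i]$ by Section~\ref{gusanosz}. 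Via the isomorphism above, $H_{n-2}(\mathbf{F}_f;\ZZ)$ is then generated by the images of the $[S_i]$ under $\calA_i\hookrightarrow\mathbf{F}_f$, and it is enough to prove that each of these images vanishes.

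Next comes the localisation step, exactly as in Lemma~\ref{cork3e0}: as $\mathrm{corank}(H(f)(O))\geq 3$, the unfolding~(\ref{unfolding1}) has a parameter $s_0$ and a point $x\in\Sigma_{s_0}$ at which $(f_{s_0})_x$ is right-equivalent to the normal form appearing there, whose Milnor fibre is homotopy equivalent to $\SSS^{n-1}$ and hence has vanishing $(n-2)$‑homology. Taking $s$ generic near $s_0$ and a point $p_{i_0}\in\Sigma[2]_s$ inside the small Milnor ball $B(x,\epsilon_0)$, the sphere $S_{i_0}\subset\calA_{i_0}$ lies in $f_s^{-1}(t_0)\cap B(x,\epsilon_0)\simeq\SSS^{n-1}$, so $[S_{i_0}]$ maps to $0$ in $H_{n-2}(f_s^{-1}(t_0)\cap B(x,\epsilon_0);\ZZ)=0$ and therefore to $0$ in $H_{n-2}(\mathbf{F}_f;\ZZ)$. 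Finally I would spread this vanishing to all indices by monodromy: by Lemma~\ref{connectedcover} the total space of $pr_2\colon\Sigma[2]\cap pr_2^{-1}(S\setminus\Delta)\to S\setminus\Delta$ is connected, and (shrinking $B_\eta$, using normality of the determinantal variety $\overline{SM(n-3,2)}$ so that connectedness persists over $B_\eta\setminus\Delta$) the monodromy of the fibration~(\ref{fibraciongrande}) acts transitively on $\Sigma[2]_s=\{p_1,\dots,p_a\}$. A loop realising a permutation $\pi$ produces a self-diffeomorphism of $\mathbf{F}_f$ which, after an isotopy, carries each $\calA_i$ onto $\calA_{\pi(i)}$, hence sends $[S_i]$ to $\pm[S_{\pi(i)}]$ in $H_{n-2}(\mathbf{F}_f;\ZZ)$; transitivity then forces all the $[S_i]$ to agree up to sign, so they all vanish and $H_{n-2}(\mathbf{F}_f;\ZZ)=0$, i.e. $T=0$.

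The main obstacle is this last step: one must make precise the passage from the covering monodromy of Lemma~\ref{connectedcover} to an honest automorphism of the pair $(\mathbf{F}_f,\bigsqcup_i\calA_i)$ through the locally trivial fibration~(\ref{fibraciongrande}), and check that the monodromy diffeomorphism can be isotoped so as to carry the Milnor balls $\calA_i$ onto one another (so that on $H_{n-2}$ it merely permutes the $[S_i]$ up to sign), together with the minor point of obtaining transitivity over $B_\eta\setminus\Delta$ rather than over all of $S\setminus\Delta$. An alternative, more computational route that avoids the monodromy would be to re-run the Mayer–Vietoris computations of Sections~\ref{sectiondecomp}--\ref{seccalm} with $\ZZ_p$ coefficients for each odd prime $p$ — where $\RR\PP^3$ has the $\ZZ_p$‑homology of $\SSS^3$, so most of the $\ZZ_2$'s disappear — and check directly that $H_{n-2}(\calM;\ZZ_p)=0$, whence $H_{n-2}(\calM;\ZZ)$ has no odd torsion and $T=0$.
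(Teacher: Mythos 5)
Your overall strategy coincides with the paper's: localise at a corank $\geq 3$ point of the deformed singular locus to kill one sphere $S_{i_0}$ (the argument already used in Lemma~\ref{cork3e0}), then use the connectedness statement of Lemma~\ref{connectedcover} to transport this vanishing to every $S_i$ by monodromy. The reduction $H_{n-2}(\mathbf{F}_f;\ZZ)\cong H_{n-2}(\calM;\ZZ)$, the surjectivity of $H_{n-2}(\calX;\ZZ)\to H_{n-2}(\calM;\ZZ)$, and the localisation step are all correct; and the imprecision you flag in the monodromy step (upgrading the permutation of the points of $\Sigma[2]_s$ to a permutation of the classes $[S_i]$ up to sign) is present at exactly the same level in the paper's own proof, so it is not a gap relative to the paper's standard.

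There is, however, a genuine gap: the assertion that $H_{n-2}(\calX;\ZZ)=\bigoplus_i\ZZ[S_i]$, and hence that $H_{n-2}(\calM;\ZZ)$ is generated by the images of the $S_i$. This misreads Section~\ref{gusanosz}. The Mayer--Vietoris sequence for $\calX=\calB\cup\bigcup_i\calA_i$ with integer coefficients gives a short exact sequence
\[
0\to\bigoplus_i H_{n-2}(\calA_i;\ZZ)\to H_{n-2}(\calX;\ZZ)\to \ker\Bigl(\bigoplus_i H_{n-3}(\calA_i\cap\calB;\ZZ)\to H_{n-3}(\calB;\ZZ)\Bigr)\to 0,
\]
whose right-hand term is $\ZZ_2^{a-1}$; the ``sufficient system of generators'' of that section consists of the spheres $S_1,\dots,S_a$ \emph{together with} the classes $Z_2,\dots,Z_a$, and for $a\geq 2$ the subgroup generated by the $S_i$ alone has index $2^{a-1}$. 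Consequently, showing that every $[S_i]$ dies in $H_{n-2}(\calM;\ZZ)$ does not by itself give $H_{n-2}(\calM;\ZZ)=0$, which is what your argument asserts. The conclusion $T=0$ is nevertheless one line away from where you stop: once all $S_i$ vanish, the surjection $H_{n-2}(\calX;\ZZ)\twoheadrightarrow H_{n-2}(\calM;\ZZ)$ factors through the quotient $\ZZ_2^{a-1}$ above, so $H_{n-2}(\calM;\ZZ)$ is annihilated by $2$ and has no odd torsion. (The paper repairs the same issue in a slightly different way: an odd-torsion class reduces to zero mod $2$, hence is divisible by $2$, and $2Z_i$ lies in the span of the $S_j$.) Your alternative route via $\ZZ_p$-coefficients for odd $p$ would also work in principle, but it requires redoing Lemma~\ref{imagen} and the subsequent sequences mod $p$, which is considerably more work than the one-line repair.
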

\begin{proof}
Let $F:\CC^n\times S\to\CC$ be the unfolding associated with $f$ in Section~\ref{sectionunfoldings}. By 
Theorem~\ref{teounfolding1} there is a monodromy representation
\[\rho:\pi_1(S\setminus\Delta)\to Aut(H_{n-2}(F_s^{-1}(\delta)\cap B_\epsilon;\ZZ)).\]
By Lemma~\ref{connectedcover}, if one of the generators of the form $S_i$ of $H_{n-2}(\calX;\ZZ)$ maps to zero in $H_{n-2}(F_s^{-1}(\delta)\cap B_\epsilon;\ZZ)$,
then every other generator of the form $S_j$ maps to zero too. In the proof Lemma~\ref{cork3e0} we have seen that this is the case. By Homology Splitting we conclude that any $S_i$ is zero 
in $H_{n-2}(\calM;\ZZ)$.

Now let $z\in H_{n-2}(\calM;\ZZ)$ be a $p$-torsion element with $p\neq 2$. Then $pz=0$, which means that, considered with coefficients in $\ZZ_2$ its class $[z]\in H_{n-2}(\calM;\ZZ_2)$
must be also zero. This means that $z$ is homologous to $z'=\sum_i2a_iZ_i$. But from the exactness of the sequence
$$
0\to \oplus_iH_{n-2}(\calA_i;\ZZ)\to H_{n-2}(\calX;\ZZ)\to \oplus_iH_{n-3}(\partial\calA_i;\ZZ)\to H_{n-3}(\calB;\ZZ)\to 0
$$ we get that $2Z_i$ can be expressed as a linear combination of the $S_i$'s (recall that the $S_i$ are the images of the generators of $H_{n-2}(\calA_i;\ZZ)$,
and that the $Z_i$ corresponds to the generators of the kernel of $\oplus_iH_{n-3}(\calA_i\cap B;\ZZ)\to H_{n-3}(\calB;\ZZ)$, which is isomorphic to $\ZZ_2^{a-1}$). We can finally conclude that $z'$ can be expressed as a sum of some $S_i$'s, but as we have seen before, all of them are zero in $H_{n-2}(\calM;\ZZ)$.
\end{proof}

\begin{lema}\label{t0e1}
If $corank(H(f)(O))=2$, then $T=0$.
\end{lema}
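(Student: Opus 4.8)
The plan is to follow closely the pattern of the proof of Lemma~\ref{t0e0}, substituting for Lemma~\ref{cork3e0} the nonvanishing statements of Lemmas~\ref{casosimplee1} and~\ref{cork2e1}. First, comparing the free rank $\mu_0+2\mu_1-4a+1+e$ of $H_{n-1}(\calM;\ZZ)$ with the Mayer--Vietoris exact sequence of the decomposition~(\ref{descom}), one reads off that $H_{n-2}(\calM;\ZZ)\cong\mathrm{coker}(\varphi_{n-2})$ has rank exactly $e=1$; since we already know it has no $2$-torsion, $H_{n-2}(\calM;\ZZ)\cong\ZZ\oplus T$ with $T$ finite of odd order, and the statement amounts to $T=0$. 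As in Lemma~\ref{t0e0}, if $z\in T$ then $z$ is $2$-divisible in $H_{n-2}(\calM;\ZZ)$ (it has odd order), so writing $z=2w$, expanding $w$ in the generating set $\{[S_i]\}\cup\{[Z_i]\}$ of $H_{n-2}(\calX;\ZZ)$, and using that each $2Z_i$ is an integral combination of the $S_j$ (which is exactly the content of the exact sequence $0\to\oplus_iH_{n-2}(\calA_i;\ZZ)\to H_{n-2}(\calX;\ZZ)\to\oplus_iH_{n-3}(\calA_i\cap\calB;\ZZ)\to H_{n-3}(\calB;\ZZ)\to 0$), one gets that $z$ lies in the subgroup $N:=\langle[S_1],\dots,[S_a]\rangle\subseteq H_{n-2}(\calM;\ZZ)$ generated by the images of the $D(3,2)$--Milnor--fibre classes. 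Hence it suffices to show that $N$ is torsion free.

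For that I would use the monodromy. Since $H_{n-2}(\calM;\ZZ)\cong H_{n-2}(\mathbf{F}_f;\ZZ)$, Lemma~\ref{connectedcover} (connectedness of $\Sigma[2]\cap pr_2^{-1}(S\setminus\Delta)$) shows that the monodromy representation $\pi_1(S\setminus\Delta)\to\mathrm{Aut}(H_{n-2}(\mathbf{F}_f;\ZZ))$ permutes $[S_1],\dots,[S_a]$ transitively up to sign, and Lemma~\ref{cork2e1} (nonvanishing of $\int_Z\Omega$) shows one of them, hence all of them, is non-torsion. Writing $[S_i]=\epsilon_i\,m\,g+\tau_i$ with $g$ a generator of the free summand, $\epsilon_i=\pm1$, $m\geq1$ a fixed integer and $\tau_i\in T$, the subgroup $N$ is torsion free if and only if the elements $\epsilon_i^{-1}[S_i]$ all coincide, i.e.\ the monodromy orbit of $[S_1]$ is exactly $\{[S_1],-[S_1]\}$. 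This last point I would establish by analysing the local monodromy around a collision of two of the points $p_i$: by Lemmas~\ref{fullequivalence} and~\ref{monodromiatransitiva} any $p_i,p_j$ can be made to collide along a path in $S$, the collision being governed by the versal deformation of the $0$-dimensional i.c.i.s.\ $\Sigma[2]_0$; since that locus is cut out transversally by $h_{1,1},h_{1,2},h_{2,2}$, the merged germ is again of ``quadratic form in the $G_{i,s}$'' type with $\Sigma[2]$ a double point, and an explicit computation with its normal form (in the spirit of Lemma~\ref{monodromiaecuador}) shows the vanishing relation forces $[S_i]=\pm[S_j]$ in $H_{n-2}(\mathbf{F}_f;\ZZ)$ with no torsion correction. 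Therefore $N=\ZZ[S_1]$ is infinite cyclic, and $T\subseteq N$ gives $T=0$.

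The delicate step is the last one: keeping track of the sign and of the torsion component of the monodromy action, equivalently proving that $\mathrm{im}(\varphi_{n-2})$ is a \emph{saturated} sublattice of $H_{n-2}(\calX;\ZZ)\cong\ZZ^a$ rather than merely of finite index in its saturation. If that bookkeeping turns out to be awkward, a robust substitute, modelled on the proof of Lemma~\ref{postpuesto}, is a reduction to an example: assuming $T\neq0$, pick an odd prime $p$ dividing $|T|$ and rerun the whole chain of Mayer--Vietoris computations with $\ZZ_p$--coefficients for a corank $2$ germ whose Milnor fibre is independently known to be a bouquet of spheres --- for instance a suitable suspension of the germ $f=(x_1,x_2)\left(\begin{smallmatrix}x_3&x_4\\x_4&x_3-x_5^2\end{smallmatrix}\right)(x_1,x_2)^t$ of~\cite{FdB3}; since over $\ZZ_p$ all the $\ZZ_2$--contributions that appeared in the computation disappear, one obtains an extra $\ZZ_p$ in the $(n-2)$nd homology of that Milnor fibre, contradicting its known homotopy type.
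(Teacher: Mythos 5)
Your skeleton coincides with the paper's: reduce the vanishing of odd torsion to the statement that the classes $[S_1],\dots,[S_a]$ all agree (up to orientation) in $H_{n-2}(\calM;\ZZ)$, detect the free part by integrating the form $\Omega$ of Lemmas~\ref{casosimplee1} and~\ref{cork2e1}, and invoke Lemmas~\ref{fullequivalence} and~\ref{monodromiatransitiva} to reduce the coincidence of all the $[S_i]$ to the coincidence of the two classes attached to a single vanishing cycle of the $0$-dimensional i.c.i.s.\ $\Sigma[2]_0$, i.e.\ to the case where that i.c.i.s.\ has Milnor number $1$. Up to there your argument is sound and is essentially the one in the paper. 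The gap is exactly the step you flag as delicate: you assert that after the reduction ``the merged germ is again of quadratic-form type with $\Sigma[2]$ a double point, and an explicit computation with its normal form shows the vanishing relation forces $[S_i]=\pm[S_j]$ with no torsion correction.'' That sentence is the theorem, not a computation. When the i.c.i.s.\ $(h_{1,1},h_{1,2},h_{2,2},g_1,\dots,g_{n-3})$ has Milnor number $1$, only $n-1$ of these $n$ functions can be taken as independent coordinates, and which one fails to be independent changes the geometry; one is forced into a case analysis. In the hardest case, where $g_1$ and $g_2$ are not independent variables, the equality $[S_1]=[S_2]$ is not read off from a normal form at all: one must exhibit an auxiliary vanishing cycle $S_3$ of $\Sigma[1]_s$ meeting $S_1$ and $S_2$ transversely in one point each and use the sphere fibrations lying over it to write $S_1-S_2$ as a boundary, which requires an explicit study of the discriminant of the pair $(q,det)$ and of how the relevant $2$-sphere degenerates. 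None of this is supplied, or even correctly anticipated, by your sketch.

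Your fallback does not repair this. Unlike Lemma~\ref{postpuesto}, where the map in question is a local $\ZZ_2$-valued datum attached to the $D(3,2)$ singularity and hence independent of the ambient germ, the group $T$ is the torsion of $\mathrm{coker}(\varphi_{n-2})$, and the matrix of $\varphi_{n-2}$ is built from the intersection numbers $b_{k,i}$ of the Lefschetz thimbles $\partial C_k$ with the vanishing cycles $E_i$; these vary from germ to germ. Checking that one explicit corank-$2$ example has no odd torsion in $H_{n-2}$ therefore says nothing about an arbitrary $f$ with $corank(H(f)(O))=2$, so the ``reduction to an example'' cannot substitute for the collision analysis. To complete the proof you must actually carry out the Milnor-number-$1$ case study.
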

\begin{proof}
Let $z\in H_{n-2}(\calM;\ZZ)$ be a $p$-torsion element with $p\neq 2$. Then we have the following equality with coefficients in $\ZZ_2$:
\[0=[z]\in H_{n-2}(\calM;\ZZ_2).\] 
As before, this means that homologically, $z$ can be expressed as $z=\sum_ia_iS_i$. Assume that all $S_i$ are equal in
$H_{n-2}(\calM;\ZZ)$. We would have that, integrating against the form $\Omega$ of Lemma~\ref{casosimplee1} and
Lemma~\ref{cork2e1} (normalizing it if necessary) we get
$$
\int_z \Omega=\sum_ia_i\int_{S_i} \Omega =\sum_ia_i
$$
which, by the hypothesis of $z$ being of $p$-torsion, means that $\sum_i a_i=0$, and, hence, that $[z]=0$.
 
We only need to prove that $S_i$ and $S_j$ represent the same class in $H_{n-2}(\calM;\ZZ)$ for any $i,j$.

If the functions 
\begin{equation}
\label{enefunciones}
\{h_{1,1},h_{1,2},h_{2,2},g_1,...,g_{n-3}\}
\end{equation}
form an i.c.i.s at the origin of Milnor number $0$ (that
is they are transverse) then there is only one sphere $S_1$ and the result is proved. Let us assume that they form an
i.c.i.s at the origin of Milnor number at least $1$.

Given a point $s_0\in S\setminus\Lambda$ there is a $1-1$ correspondence between points $p_i$ of $\Sigma[2]_{s_0}$ and 
spheres $S_i$ as above. To a vanishing cycle $\{p_i,p_j\}$ (recall Definition~\ref{vanishcycle}) corresponds a pair of
spheres $\{S_i,S_j\}$. By Lemmas~\ref{fullequivalence}~and~\ref{monodromiatransitiva} in order to prove that 
$S_i$ and $S_j$ represent the same class in $H_{n-2}(\calM;\ZZ)$ for any $i,j$ it is enough to show that there exists 
a vanishing cycle $\{p_i,p_j\}$ such that $S_i$ and $S_j$ represent the same class in $H_{n-2}(\calM;\ZZ)$. This reduces
the proof to the case in which the Milnor number of the i.c.i.s. defined by~(\ref{enefunciones}) at the origin is $1$.

The fact that the functions~(\ref{enefunciones}) have Milnor number $1$ at the origin implies that at least $n-1$ of
them must be linearly independent variables (after a suitable change of coordinates). After this it is easy to see that 
we can restrict ourselves to one of the following cases that we will list and analyse below. In this analysis we will 
use repeatedly the following fundamental fact, which is clear from Homology Splitting and from Section~\ref{sectiondecomp}: 

\textbf{Fact 1}. The homology of the Milnor Fibre of a germ $f$ only depends on the number of Morse points appearing in a 
generic value of $s$ of the base space of the versal deformation $S$ and on the topology of the triple 
$(\Sigma_s,\Sigma[1]_s,\Sigma[2]_s)$. The homology of $\calM$ only depends on the topology of the triple. The homology
of the Milnor Fibre has torsion if and only if the homology of $\calM$ has torsion.

The list of cases is the following:

\textsc{Case 1}. Suppose $f=(g_1,g_2)\cdot\left(\begin{array}{cc} g_3 & g_4 \\ g_4 & g_5\end{array}\right) \cdot \left(\begin{array}{c} g_1 \\ g_2\end{array}\right)$ with $g_1,g_2,g_3,g_4$ independent variables.
In this case, we can take coordinates such that $g_i=x_i$ for $i=1,\ldots,4$, and $g_5=ax_3+bx_5^2+\phi$, being $\phi$ a sum of higher order terms.

Consider the following family of functions: 
\[f_t=(x_1,x_2)\cdot\left(\begin{array}{cc} x_3 & x_4 \\ x_4 & ax_3+bx_5^2+t\phi\end{array}\right) \cdot \left(\begin{array}{c} x_1 \\ x_2\end{array}\right).\] 
It is clear that $f_1=f$.
For any $t$ the singular set $\Sigma$ is smooth, the set $\Sigma[1]$ is the surface given by the suspension of two smooth branches 
with intersection multiplicity equal to $2$, and the set $\Sigma[2]$ is just the origin.
After a perturbation the triple $(\Sigma,\Sigma[1],\Sigma[2])$ becomes a triple which has the topology of
\[(\CC^3,V(z_1(z_1+z_2^2-1)+z_3^2,V(z_1,z_2^2-1,z_3))\]
independently of $t$. Moreover in the generic perturbation
there are no $A_1$ points appearin outside $\Sigma$ for any $t$. Therefore, by Fact 1 in order to compute the homology
 of the Milnor fibre we may assume $t=0$.

Write $f_0=x_1^2x_3+2x_1x_2x_4+ax_3x_2^2+bx_2^2x_5^2=(x_1^2+ax_2^2)x_3+(2x_1x_2)x_4+bx_2^2x_5^2$. Since it is quasi-homogenous, we can take infinite Milnor radius and we are reduced to compute the homology of:
$$
(x_1^2+ax_2^2)x_3+(2x_1x_2)x_4+bx_2^2x_5^2=1.
$$
Projecting to $(x_1,x_2)$, we see that there exists a preimage if and only if $(x_1^2+ax_2^2,x_1x_2,bx_2^2)\neq(0,0,0)$, that is, everywehere except in the point $(x_1,x_2)=(0,0)$. It can be easily checked that the fibre over each point is contractible, and hence the Milnor fibre $F_{f_0}$ has the homotopy type of $\CC^2\setminus\{0\}\approx \SSS^3$.

\textsc{Case 2}. Suppose $f=(g_1,g_2)\cdot\left(\begin{array}{cc} g_3 & g_4 \\ g_4 & g_5\end{array}\right) \cdot \left(\begin{array}{c} g_1 \\ g_2\end{array}\right)$ with $g_1,g_2,g_3,g_5$ independent variables. We can write $f=(x_1,x_2)\cdot\left(\begin{array}{cc} x_3 & g_4 \\ g_4 & x_5\end{array}\right) \cdot \left(\begin{array}{c} x_1 \\ x_2\end{array}\right)$, where $g_4=ax_1+bx_2+x_4^2+\phi$, being $\phi$ again a sum of higher order terms. After an appropriate change of basis in $x_1$ and $x_2$ we get 
$$
f=(x_1,x_2)\cdot\left(\begin{array}{cc} x_3 & ax_1+bx_2+x_4^2+\phi \\ ax_1+bx_2+x_4^2+\phi & x_5\end{array}\right) \cdot \left(\begin{array}{c} x_1 \\ x_2\end{array}\right)=
$$
$$
=(x_1-x_2,x_2)\cdot\left(\begin{array}{cc} x_3 & ax_1+bx_2+x_3+x_4^2+\phi \\ ax_1+bx_2+x_3+x_4^2+\phi & 2ax_1+2bx_2+x_3+x_5\end{array}\right) \cdot \left(\begin{array}{c} x_1 -x_2\\ x_2\end{array}\right)=
$$
$$
=(x_2,x_1-x_2)\cdot\left(\begin{array}{cc} 2ax_1+2bx_2+x_3+x_5 & ax_1+bx_2+x_3+x_4^2+\phi \\ ax_1+bx_2+x_3+x_4^2+\phi &   x_3\end{array}\right) \cdot \left(\begin{array}{c} x_2 \\x_1 -x_2\end{array}\right)
$$
which falls into the previous case.

\textsc{Case 3}. Suppose $f=(g_1,g_2)\cdot\left(\begin{array}{cc} g_3 & g_4 \\ g_4 & g_5\end{array}\right) \cdot \left(\begin{array}{c} g_1 \\ g_2\end{array}\right)$ with
$g_1$ and $g_2$ are not linearly independent variables. After a change of base, we may assume that $f$ is of the form
$$
f=(x_1,q)\cdot\left(\begin{array}{cc} x_3 & x_4 \\ x_4 &   x_5\end{array}\right) \cdot \left(\begin{array}{c} x_1 \\q\end{array}\right)
$$
where $q$ has a Taylor development starting by a generic cuadric. Like in Case 1, using Fact 1 and an apropiate family
$f_t$, we may reduce the to the case in which $q=x_1^2+x_2^2+x_3^2+x_4^2+x_5^2$.

The triple $(\Sigma,\Sigma[1],\Sigma[2])$ and its deformations $(\Sigma_s,\Sigma[1]_s,\Sigma[2]_s)$ when we move $s$ in the base $S$ of the unfolding are always contained in the hyperplane $x_1=0$. We restrict to this hyperplane and forget 
the variable $x_1$ for the rest of the analysis of this case.

In this hyperplane, the i.c.i.s. $\Sigma$
is given the hypersurface $q=0$, and the singular locus of $det=x_3\cdot x_5-x_4^2$ is the $x_2$-axis. 
When we consider the Milnor Fibre $q^{-1}(s)$, it intersects the $x_2$ axis in two points. This two points correspond to two vanishing cyles $S_1,S_2$ in the Milnor Fibre $\calF$ 
of $\Sigma[1]=V(x_1,q,det)$. 
Each vanishing cycle $S_i$ corresponds to a point $p_i\in\Sigma[2]_s$, which gives a class $\calS_i$ in $H_3(\calM;\ZZ)$. Wee need to prove that these two classes are equal.
Running in this particular case the general considerations previously made
in order to compute the homology of $\calM$, we observe that if we find a vanishing cycle $S_3$ in $\calF$ 
meeting each $S_1$ and $S_2$ transversely at a point, we can use it and the fibrations above it, in order to express
the chain $S_1-S_2$ as a boundary.

The critical locus of the germ $(q,det):\CC^4\to\CC^2$ consists of four linear components, whose parametrizations are 
given by $(t,0,0,0)$ $(0,t,0,t)$, $(0,t,0,-t)$ and $(0,0,t,0)$ respectively. The corresponding components of the
discriminant are parametrized as follows: $(t,0)$, $(2t^2,t^2)$, $(2t^2,-t^2)$ and $(t^2,-t^2)$. Since we are working on the milnor fibre of $q$, we are looking at the preimage of the set $\{(x,y)\in \CC^2\mid x=1\}$. In that line, the point $(1,0)$ correspond to the values where we want to look for the vanishing cycle touching the two critical points, which are $(1,0,0,0)$ and $(-1,0,0,0)$. In order to track how this cycle vanishes, we will consider the interval $(1,\epsilon)$, where $\epsilon$ ranges from $0$ to $\frac{1}{2}$. We will consider the expansion of $q$ and $det$ based in the point $(0,\frac{1}{\sqrt{2}},0,\frac{1}{\sqrt{2}})$:
$$
q=x_2^2+\sqrt{2}x_3+x_3^2+x_4^2+\sqrt{2}x_5+x_5^2+1
$$
$$
det=\frac{1}{2}+\frac{1}{\sqrt{2}}(x_3+x_5)+x_3x_5-x_4^2.
$$
For a fixed $\epsilon\in[0,\frac{1}{2}]$, the fibre over the point $(1,\epsilon)$ is given by
$$
\frac{1}{2}w^2+\sqrt{2}w+x_2^2+\frac{1}{2}z^2+x_4^2=0
$$
$$
x_2^2+3x_4^2+z^2=1-2\epsilon
$$
where $w=(x_3+x_5)$, $z=x_3-x_5$.

The real solutions of $x_2^2+3x_4^2+z^2=1-2\epsilon$ are a single point if $\epsilon=\frac{1}{2}$ and a 2-sphere if $\epsilon\in[0,\frac{1}{2})$. 
Fixed $x_2$, $x_4$ and $z$, there are two posible choices for $w$, except when the discriminant of $\frac{1}{2}w^2+\sqrt{2}w+x_2^2+\frac{1}{2}z^2+x_4^2$ vanishes,
that is, when $x_2^2+\frac{1}{2}z^2+x_4^2=1$. But this condition, togeteher with $x_2^2+3x_4^2+z^2=1-2\epsilon$ implies $4x_4^2+z^2=-4\epsilon$, 
which does not have real solutions if $\epsilon>0$. Since $\SSS^2$ is simply connected, the only possible double cover over it is two copies of $\SSS^2$.
That is, we have two copies of $\SSS^2$ over each point between $(1,0)$ and $(1,\frac{1}{2})$; this two spheres collapse when we go to $(1,\frac{1}{2})$,
and they intersect in two different points at $(1,0)$. This two points of intersection are preciselly $(1,0,0,0)$ and $(-1,0,0,0)$, which are the singular points of $det$ at $q=1$.
Any of this two spheres is a vanishing cycle as we are looking for.

\textsc{Case 4}. If $f$ is of the form
$$
f=(x_1,x_2,g_6)\cdot\left(\begin{array}{ccc} x_3 & x_4 & 0 \\ x_4 & x_5 & 0 \\ 0& 0 & 1\end{array}\right) \cdot \left(\begin{array}{c} x_1 \\ x_2 \\ g_6\end{array}\right)
$$
with the linear part of $g_6$ lineally dependent with $x_1,x_2,x_3,x_4,x_5$, the configurations
$(\Sigma,\Sigma[1],\Sigma[2])$ and its deformations $(\Sigma_s,\Sigma[1]_s,\Sigma[2]_s)$ 
are easily checked to be suspensions of those in the previous case. Since all the method depends on this configuration,
this case can be treated in the same way as the previous one.
\end{proof}

\subsection{The case of $corank(H(f)(O))=1$}

In this case $\calX=\calB$ fibres over $h_t^{-1}(0)\approx \vee_{\mu_1}\SSS^2$ with fibre $\SSS^{n-3}$, and $\calB_u$ fibres over $h_t^{-1}(0)$ with fibre $\SSS^{n-4}$.
Since $h_t^{-1}(0)$ is simply connected, both fibrations are orientable. Using the Gysin sequence of these fibrations we get that $H_k(\calB;\ZZ)\cong\ZZ$ for $k=n-3,0$,
$H_k(\calB;\ZZ)\cong\ZZ^{\mu_1}$ for $k={n-1},2$, and $0$ otherwise. Adding the Lefschetz thimbles as before, we obtain that 
\[H_{n-1}(\calM;\ZZ)\cong\ZZ^{2\mu_1+\mu_0},\]
\[H_{n-3}(\calM;\ZZ)\cong\ZZ,\]
\[H_2(\calM;\ZZ)\cong \ZZ^{\mu_0}\]
\[H_0(\calM;\ZZ)\cong\ZZ,\]
and the rest of the homology groups are trivial.

\section{The homology of the Milnor fibre}
\label{sechomolfibramilnor}

After we have computed the homology of $\calM$ we can use Proposition~\ref{homsplit} in order to compute the homology of the Milnor fibre of $f$.

Since the tubular neighbourhood $T$ is homotopy equivalent to the Milnor fibre of $\Sigma_s$ of the $3$-dimensional i.c.i.s. $\Sigma_0$ we have 
\[H_0(T;\ZZ)\cong\ZZ\]
\[H_3(T;\ZZ)\cong\ZZ^{\mu_0}\]
\[H_i(T;\ZZ)= 0\] for any other $i$.

The inclusion of $\calM$ in $T$ gives clearly an isomorphism in the $H_3$ when $n\geq 7$,
and hence $H_i(T,\calM;\ZZ)=0$ for $1\leq i\leq 3$, and $H_{i+1}(T,\calM;\ZZ)\cong H_i(\calM;\ZZ)$ for $i\geq 4$. 

We have obtained:
\begin{theo}
\label{homologia}
Let $\mu_0$ and $\mu_1$ be the Milnor numbers of the i.c.i.s. $(g_1,...,g_{n-3})$ and $(det(H(f)),g_1,...,g_{n-3})$.
The homology of the Milnor fibre is the following:
\begin{itemize}
\item If $corank(H(f)(0)\geq 3$:
\[H_{n-1}(\mathbf{F}_f;\ZZ)\cong \ZZ^{\mu_0+2\mu_1-4a+1+\#A_1},\]
\[H_{k}(\mathbf{F}_f;\ZZ)=0\]
if $1\leq k\leq n-2$,
\[H_0(\mathbf{F}_f;\ZZ)\cong \ZZ.\]
\item If $corank(H(f)(0)=2$:
\[H_{n-1}(\mathbf{F}_f;\ZZ)\cong \ZZ^{\mu_0+2\mu_1-4a+2+\#A_1},\]
\[H_{n-2}(\mathbf{F}_f;\ZZ)\cong \ZZ,\]
\[H_{k}(\mathbf{F}_f;\ZZ)=0\]
if $1\leq k\leq n-3$,
\[H_0(\mathbf{F}_f;\ZZ)\cong \ZZ.\]
\item If $corank(H(f)(0)=1$:
\[H_{n-1}(\mathbf{F}_f;\ZZ)\cong\ZZ^{\mu_0+2\mu_1},\]
\[H_{n-3}(\mathbf{F}_f;\ZZ)\cong\ZZ,\]
\[H_{k}(\mathbf{F}_f;\ZZ)=0\]
if $k=n-2$ and if $1\leq k\leq n-4$,
\[H_0(\mathbf{F}_f;\ZZ)\cong \ZZ.\]
\item If $corank(H(f)(0)=0$:
\[H_{n-1}(\mathbf{F}_f;\ZZ)\cong\ZZ^{\mu_0},\]
\[H_{n-4}(\mathbf{F}_f;\ZZ)\cong\ZZ,\]
\[H_{k}(\mathbf{F}_f;\ZZ)=0\]
if $k=n-2, n-3$ and $1\leq k\leq n-4$,
\[H_0(\mathbf{F}_f;\ZZ)\cong \ZZ.\]
\end{itemize}
\end{theo}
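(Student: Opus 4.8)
The plan is to read off $H_\bullet(\mathbf{F}_f;\ZZ)$ from the long exact sequence of the pair $(T,\calM)$ together with Proposition~\ref{homsplit}, using that $H_\bullet(T;\ZZ)$ is trivial and that $H_\bullet(\calM;\ZZ)$ has been determined in Section~\ref{seccalm}. By the suspension reduction of Section~\ref{homologiapar} we may assume $n\geq 8$. Since $T$ is a tubular neighbourhood of $\Sigma_s$, and $\Sigma_s$ is by Theorem~\ref{teounfolding1} a Milnor fibre of the $3$-dimensional i.c.i.s. $\Sigma_0$, it is $2$-connected and homotopy equivalent to $\vee_{\mu_0}\SSS^3$; hence $H_0(T;\ZZ)\cong\ZZ$, $H_3(T;\ZZ)\cong\ZZ^{\mu_0}$ and all other homology of $T$ vanishes.

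The next step is to check that the inclusion $\calM\hookrightarrow T$ induces an isomorphism on $H_k$ for $0\leq k\leq 3$. One retracts $\calM$ onto $\calM'=\pi^{-1}(M')$, which is glued from the blocks $\calA_i$, $\calB$ and the thimble blocks $\calC_i\cong C_i\times\SSS^{n-4}$; these fibre over the corresponding pieces of $\Sigma_s$ with fibres spheres of dimension $\geq n-4$, so in degrees $k\leq 3$ their Gysin sequences reduce the Mayer--Vietoris computation of $\calM'$ to that of the decomposition $M'$ of $\Sigma_s$. In particular the $2$-dimensional attaching classes of the $\calC_i$ are carried by $\pi$ to the classes $[\partial C_i]$ that build $\Sigma_s$ out of $\overline{\Sigma[1]_s}$, so $H_k(\calM)\cong H_k(\Sigma_s)$ for $k=0,1,2$ and $\pi_*\colon H_3(\calM)\to H_3(\Sigma_s)=H_3(T)$ is an isomorphism. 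Combining this with the vanishing $H_k(T)=0$ for $k\neq 0,3$, the long exact sequence of $(T,\calM)$ gives $H_k(T,\calM;\ZZ)=0$ for $1\leq k\leq 4$ and $H_k(T,\calM;\ZZ)\cong H_{k-1}(\calM;\ZZ)$ for $k\geq 5$.

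Now Proposition~\ref{homsplit} yields $H_{n-1}(\mathbf{F}_f;\ZZ)\cong H_{n-1}(\calM;\ZZ)\oplus\ZZ^{\#A_1}$, $H_k(\mathbf{F}_f;\ZZ)=0$ for $1\leq k\leq 3$, $H_k(\mathbf{F}_f;\ZZ)\cong H_k(\calM;\ZZ)$ for $4\leq k\leq n-2$, and $H_0(\mathbf{F}_f;\ZZ)\cong\ZZ$. It remains to substitute the four cases of $H_\bullet(\calM;\ZZ)$: for $corank(H(f)(O))\geq 3$ one uses $e=0$ and absence of torsion (Lemmas~\ref{cork3e0} and~\ref{t0e0}); for $corank=2$ one uses $e=1$ and absence of torsion (Lemmas~\ref{cork2e1} and~\ref{t0e1}), noting that the $\ZZ_2$ in $H_{n-2}(\calM;\ZZ_2)$ lifts to a free $\ZZ$ summand of $H_{n-2}(\calM;\ZZ)$ because it pairs nontrivially with the closed form constructed in Lemma~\ref{cork2e1}; for $corank\leq 1$ one uses the explicit computations of the last subsections of Section~\ref{seccalm}, where moreover $\#A_1=0$. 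Since the remaining homology of $\calM$ sits in degrees $\leq 3$, it contributes nothing in the range $4\leq k\leq n-2$ once $n\geq 8$, and the four stated formulas come out.

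At this stage the only genuinely delicate point is the low-degree comparison $H_k(\calM)\cong H_k(\Sigma_s)$ for $k\leq 3$: one must check that the thimble blocks $\calC_i$ and the sphere fibrations over the strata of $\Sigma_s$ fit together so that $\calM$ really has the low-dimensional homology of $T\simeq\Sigma_s$ (equivalently, that $H_1(\calM)=H_2(\calM)=0$ and $\pi_*$ is an isomorphism on $H_3$). Everything else is exact-sequence bookkeeping across the four corank cases, and all the substantial work has been carried out in the previous sections.
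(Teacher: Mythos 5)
Your proposal is correct and follows essentially the same route as the paper: Proposition~\ref{homsplit} plus the long exact sequence of $(T,\calM)$, the identification $T\simeq\vee_{\mu_0}\SSS^3$, the low-degree comparison of $\calM$ with $\Sigma_s$, substitution of the $H_\bullet(\calM;\ZZ)$ computations from Section~\ref{seccalm} in the four corank cases, and Thom--Sebastiani suspension to reduce to $n\geq 8$ (with Nemethi's result for corank $0$). You are in fact slightly more careful than the paper at the one delicate point, namely that one needs $H_1(\calM)=H_2(\calM)=0$ and not merely an isomorphism on $H_3$ to kill $H_i(T,\calM)$ for $1\leq i\leq 4$.
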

\begin{proof}
Our computations work for the case $corank(H(f)(0))>0$, if $n\geq 8$. In order to remove this restrictions we notice that by Thom-Sebastiani the Milnor fibre of $f+z^2$
with $z$ a new variable is the suspension of the original Milnor fibre, and that the case $corank(H(f)(0))=0$ was proved by Nemethi~in~\cite{Nm}.
\end{proof}

\section{The homotopy type of the Milnor fibre}
\label{sechomotfibramilnor}

\begin{prop}
\label{fungroup}
The Milnor fibre $\mathbf{F}_f$ is simply connected if $corank(h_{i,j}(0))\neq 0$.
\end{prop}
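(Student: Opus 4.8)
The plan is to read off $\pi_1(\mathbf F_f)$ from the Picard--Lefschetz model of the contractible total space $X_s$ built in Section~\ref{sechomsplit}, after first establishing that $\calM$ itself is simply connected. We work in the range $n\geq 8$ (as declared, this is the range in which the sphere fibrations behave well; the very low-dimensional cases are not the concern here). The key point will be that all spheres $\SSS^{n-2},\SSS^{n-3},\SSS^{n-4}$ and the product $\SSS^2\times\SSS^{n-4}$ occurring in the decomposition of Section~\ref{sectiondecomp} are simply connected, so that the fundamental groups of the pieces are governed by their bases.

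First we would show $\pi_1(\calM)=1$. Using $\calM\simeq\calM'=\calX\cup(\cup_i\calC_i)$ with $\calX=(\cup_i\calA_i)\cup\calB$, the homotopy exact sequences of the fibrations give $\pi_1(\calB)\cong\pi_1(B_0)\cong\ZZ_2$ (by Proposition~\ref{grupofund}), $\pi_1(\calA_i\cap\calB)\cong\pi_1(\RR\PP^3)\cong\ZZ_2$, while $\calA_i\simeq\SSS^{n-2}$, $\calC_i\simeq\SSS^{n-4}$ and $\pi^{-1}(\partial C_i)\simeq\SSS^2\times\SSS^{n-4}$ are simply connected. By Remark~\ref{isorestriccion} (and the proof of Proposition~\ref{grupofund}) the inclusion $\calA_i\cap\calB\hookrightarrow\calB$ induces an isomorphism on $\pi_1$, whereas $\calA_i\cap\calB\hookrightarrow\calA_i$ induces the zero map. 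Van Kampen applied to $\calA_1\cup\calB$ then yields
\[
\pi_1(\calA_1\cup\calB)=\pi_1(\calA_1)\ast_{\pi_1(\calA_1\cap\calB)}\pi_1(\calB)=1\ast_{\ZZ_2}\ZZ_2=1 ,
\]
so the $\ZZ_2$ coming from $B_0$ is killed. Attaching the remaining $\calA_i$ along the $\RR\PP^3$-fibrations $\calA_i\cap\calB$ (whose $\pi_1=\ZZ_2$ now maps trivially to both sides) and then the $\calC_i$ along the simply connected $\pi^{-1}(\partial C_i)$ keeps the space simply connected; hence $\pi_1(\calM)\cong\pi_1(\calM')=1$. (When $corank(H(f)(O))=1$ there are no $\calA_i$, $a=0$, and $B_0\simeq\vee_{\mu_1}\SSS^2$ is already simply connected, so $\calX=\calB$ is simply connected outright.)

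Next, by Section~\ref{sechomsplit} the pair $(f_s^{-1}(D_0),f_s^{-1}(t_0))$ is homotopy equivalent to $(\mathbf F_f\cup T,\mathbf F_f)$ with $\mathbf F_f\cap T=\calM$; since $T\simeq\Sigma_s$ is the Milnor fibre of a $3$-dimensional i.c.i.s. it is $2$-connected, $\calM$ is connected and simply connected by the previous step, so van Kampen gives $\pi_1(f_s^{-1}(D_0))\cong\pi_1(\mathbf F_f)$. Finally, up to homotopy $X_s$ is obtained from $f_s^{-1}(D_0)$ by thickening along the paths $\alpha_i$ (a homotopy equivalence) and attaching, for each of the $r$ Morse points of $f_s$, an $n$-cell (the Lefschetz thimble) along its vanishing $(n-1)$-sphere; as $n\geq 3$ this does not change $\pi_1$. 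Since $X_s\cong X_0=B_\epsilon\cap f^{-1}(D_\delta)$ is contractible, we conclude $\pi_1(\mathbf F_f)\cong\pi_1(f_s^{-1}(D_0))\cong\pi_1(X_s)=1$. The main obstacle is the first step: it is precisely the hypothesis $corank(H(f)(O))\neq 0$ — equivalently the existence of the pieces $\calA_i$ (i.e. of $D(3,2)$ points, or of the surface $\overline{\Sigma[1]_0}$) — that provides the attaching maps killing the $\ZZ_2\cong\pi_1(B_0)\cong\pi_1(\RR\PP^3)$; when $corank=0$ one has instead $\calM$ fibred over $\Sigma_s$ with fibre $\SSS^{n-4}$ and no such vanishing of $\pi_1$.
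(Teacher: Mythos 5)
Your overall strategy coincides with the paper's: compute $\pi_1(\calM)$ by van Kampen through the sphere fibrations of Section~\ref{sectiondecomp}, using that the $\calA_i$ are simply connected while $\calA_i\cap\calB\hookrightarrow\calB$ induces an isomorphism of the two copies of $\ZZ_2$, and then transfer simple connectivity from $\calM$ to $\mathbf{F}_f$ through the contractible total space $X_s$ (your $T$ together with the Lefschetz thimbles is exactly the paper's mapping cylinder $C(\pi)$ together with the cones on the vanishing cycles). Within the range you allow yourself the computation is correct.

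The genuine gap is the restriction to $n\geq 8$ and the dismissal of the ``very low-dimensional cases''. For $n\geq 6$ the proposition is an immediate consequence of the Kato--Matsumoto bound already quoted in the introduction: the singular locus has codimension $s=n-3$, so $\mathbf{F}_f$ is $(n-5)$-connected, hence simply connected as soon as $n\geq 6$; no decomposition argument is needed there. The only case in which the statement has real content --- and the case to which the paper's proof is entirely devoted --- is $n=5$, which your proposal explicitly excludes. Moreover, two of the claims you lean on fail precisely at $n=5$: there $\calC_i\simeq\SSS^{n-4}=\SSS^{1}$ and $\pi^{-1}(\partial C_i)\simeq\SSS^{2}\times\SSS^{1}$ are not simply connected. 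The conclusion survives because the inclusion $\pi^{-1}(\partial C_i)\hookrightarrow\calC_i$ induces an isomorphism $\ZZ\to\ZZ$ on $\pi_1$, so van Kampen still leaves the fundamental group unchanged (this is how the paper phrases that step, gluing $D^3\times\SSS^1$ along $\SSS^2\times\SSS^1$), but as written your argument neither covers $n=5$ nor, strictly, $n=6,7$, while for $n\geq 8$ it reproves a known connectivity statement. To close the gap, either invoke Kato--Matsumoto for $n\geq 6$ and rerun your van Kampen computation at $n=5$ with the $\pi_1$-isomorphism correction for the $\calC_i$, or verify explicitly that every step of your decomposition argument goes through at $n=5$; the rest of your reasoning (including the passage from $\pi_1(\calM)=1$ to $\pi_1(\mathbf{F}_f)=1$ via the contractibility of $X_s$ and cell attachments of dimension $n\geq 3$) is sound.
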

\begin{proof}
For $n\geq 6$, the Kato-Matsumoto bound~\cite{KM} tells us that $\mathbf{F}_f$ is simply connected. 
For the case where $n=5$, we will need the following reasoning. 

Let $\calZ_1,...,\calZ_{\#A_1}$ be representatives of the vanishing cycles of $\mathbf{F}_f$ corrsponding to the 
$A_1$ points that appear outside $\Sigma_s$ in a generic deformation. Let $C(\calZ_i)$ denote the cone over $\calZ_i$. 
Let $C(\pi)$ be the cylinder of the mapping 
\[\pi:\calM\to\Sigma_s\]. The space $C(\pi)$ is simply connected because it admits the simply connected space $\Sigma_s$ as a deformation retract.

By construction we have that 
\[\mathbf{F}_f\cup C(\pi)\cup_{\#A_1}C(\SSS^4)\]
is homotopy equivalent to the contractible space $X_s$ (see Section~\ref{sechomsplit}).       
Since each $\calZ_i$ is homeomorphic to $\SSS^4$, by Seifert-Van Kampen theorem, the gluing of the $C(\calZ_i)$ has no efect over the fundamental group, 
since both $\pi_1(C(\SSS^4))$ and $\pi_1(\SSS^4)$ are trivial.
The same reasoning tells us that, if $\pi_1(\calM)$ is trivial, so must be $\pi_1(\mathbf{F}_f)$.

The space $\calM$ is obtained from $\calX$ by gluing the preimage by $\pi$ of several Lefschetz thimbles. These pieces are topologically $D^3\times \SSS^1$ glued along 
$\SSS^2\times \SSS^1$. By Seifert-Van Kampen theorem, if $\pi_1(\calX)$ is trivial, the adition of these pieces does not change the fundamental group.
So, to prove that $\pi_1(\calM)=0$ it is enough to prove that  $\pi_1(\calX)=0$.

We may compute $\pi_1(\calX)$ using Seifert-Van Kampen with the decomposition 
\[\calX=\calB\cup \calA_1\cup\cdots,\cup\calA_a.\]
In Section~\ref{sectiondecomp} it is shown how the mapping $\pi$ allows to express each of the pieces of the
decomposition as fibrations with fibres homotopy spheres of dimension at leat $2$ over the corresponding piece of the decomposition 
\[\Sigma_s\cap det(H(F_s))^{-1}(0)=B_0\cup A_1(\zeta)\cup\cdots\cup A_a(\zeta).\] 
Using this it is easy to see that the computation of $\pi_1(\calX)$ by Seifert van Kampen mimics the computation of
$\pi_1(\Sigma_s\cap det(H(F_s))^{-1}(0))$, but this space is simply connected (in fact a bouquet of $2$-spheres). 
\end{proof}

We now have all the necessary ingredients to prove our Bouquet Theorem.

\begin{theo}
\label{homotopiabouquet}
The Milnor fibre of a singularity over a $3$-dimensional i.c.i.s. with finite extended codimension has the homotopy
type of a bouquet of spheres of different dimensions.
\end{theo}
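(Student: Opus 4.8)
The plan is to feed the homology computation of Theorem~\ref{homologia} and the simple--connectivity statement of Proposition~\ref{fungroup} into a CW--theoretic argument, treating separately the four possible values of $corank(H(f)(O))$; the case $corank(H(f)(O))=0$ is Nemethi's theorem~\cite{Nm}, so I assume from now on that the corank is positive. What is available is this: $\mathbf{F}_f$ is simply connected (Proposition~\ref{fungroup}), it has the homotopy type of a finite CW--complex of dimension at most $n-1$ (\cite{Mi}), and by Theorem~\ref{homologia} together with Lemmas~\ref{t0e0}~and~\ref{t0e1} its integral homology is \emph{torsion free} and concentrated in degree $0$, in degree $n-1$, and in at most one intermediate degree $d$ (with $d=n-2$ if $corank(H(f)(O))=2$ and $d=n-3$ if $corank(H(f)(O))=1$). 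By Hurewicz, $\mathbf{F}_f$ is then $(d-1)$--connected.

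When $corank(H(f)(O))\ge 3$ there is no intermediate degree, so $\mathbf{F}_f$ is $(n-2)$--connected; I would realise a basis of $H_{n-1}(\mathbf{F}_f;\ZZ)\cong\pi_{n-1}(\mathbf{F}_f)$ by a map $\bigvee\SSS^{n-1}\to\mathbf{F}_f$ and conclude by the homology Whitehead theorem that $\mathbf{F}_f\simeq\bigvee\SSS^{n-1}$. When $corank(H(f)(O))=2$ I would pass to the minimal CW--structure of $\mathbf{F}_f$: freeness of the homology forces one $0$--cell, exactly one $(n-2)$--cell (because $H_{n-2}(\mathbf{F}_f;\ZZ)\cong\ZZ$), finitely many $(n-1)$--cells, and no other cells. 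The cellular boundary $C_{n-1}\to C_{n-2}\cong\ZZ$ has cokernel $H_{n-2}(\mathbf{F}_f;\ZZ)\cong\ZZ$, hence it is the zero map, so each top cell is attached along a degree--zero and therefore nullhomotopic map $\SSS^{n-2}\to\SSS^{n-2}$, and $\mathbf{F}_f\simeq\SSS^{n-2}\vee\bigvee\SSS^{n-1}$. It is precisely here that the vanishing of the torsion $T$ proved in Lemma~\ref{t0e1} is indispensable, and it is also essential that $\dim\mathbf{F}_f\le n-1$, so that no cell is attached to $\SSS^{n-2}$ along the nontrivial class of $\pi_n(\SSS^{n-2})$.

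The case $corank(H(f)(O))=1$ is where I expect the genuine difficulty. Now $H_{n-2}(\mathbf{F}_f;\ZZ)=0$, so the minimal CW--model is $\SSS^{n-3}$ with $\mu_0+2\mu_1$ cells of dimension $n-1$ attached along maps $\SSS^{n-2}\to\SSS^{n-3}$; these attaching classes live in $\pi_{n-2}(\SSS^{n-3})$, which is $\ZZ/2$ (generated by a suspension of $\eta$) for $n\ge 6$, and they are invisible to homology. So a priori $\mathbf{F}_f$ could contain a summand $\SSS^{n-3}\cup_\eta e^{n-1}$, i.e.\ a suspension of $\CC\PP^2$. To exclude this I would show the attaching maps are nullhomotopic by proving that $\mathrm{Sq}^2\colon H^{n-3}(\mathbf{F}_f;\ZZ/2)\to H^{n-1}(\mathbf{F}_f;\ZZ/2)$ vanishes, since a nontrivial $\eta$--attachment on the $i$--th top cell would force this operation to hit the $i$--th generator. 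The inputs for the vanishing are: (i) the Milnor fibre of a hypersurface singularity is stably parallelisable (for a smooth affine hypersurface $T\mathbf{F}_f\oplus\calO\cong\calO^{\,n}$), so all its Wu classes are trivial; and (ii) a spanning set of $H_{n-1}(\mathbf{F}_f;\ZZ)$ is carried by embedded $(n-1)$--spheres — some arising as Morse vanishing cycles, the rest produced from the sphere fibres of the fibrations of Section~\ref{sectiondecomp} by a Whitney--trick argument, valid because $\mathbf{F}_f$ is a simply connected $(2n-2)$--dimensional manifold and $n-1$ is in the embedding range. Restricting $\mathrm{Sq}^2x$ to such a sphere $S$ and using $H^{n-3}(S;\ZZ/2)=H^{n-3}(\SSS^{n-1};\ZZ/2)=0$, naturality of $\mathrm{Sq}$ gives $\langle\mathrm{Sq}^2x,[S]\rangle=0$; since the homology is free, $\mathrm{Sq}^2\equiv 0$, whence $\mathbf{F}_f\simeq\SSS^{n-3}\vee\bigvee\SSS^{n-1}$. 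I regard carrying out (ii) precisely — identifying the geometric spheres and checking they span — as the heart of the argument, and the elimination of this $\eta$--obstruction as the main obstacle of the whole theorem.

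Finally the low--dimensional book--keeping is routine: the homology input of Theorem~\ref{homologia} already incorporates the Thom--Sebastiani reduction $\mathbf{F}_{f+z^2}\simeq\Sigma\mathbf{F}_f$, and the homotopy argument above works for all $n$ in the stated ranges, the only genuine exception being the two--dimensional skeleton $\SSS^{2}\cup(\text{$4$--cells})$ that arises in the $n=5$, $corank(H(f)(O))=1$ case (where $\pi_3(\SSS^2)\cong\ZZ$), which I would settle by hand from the explicit presentation of $\calM$ as a sphere fibration over a wedge of $2$--spheres given at the end of Section~\ref{seccalm}, reading off the relevant cup product $H^2\otimes H^2\to H^4$ from that structure. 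Assembling the four corank cases then gives that $\mathbf{F}_f$ has the homotopy type of a bouquet of spheres of at most two distinct dimensions, which is the assertion of the theorem.
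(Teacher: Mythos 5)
Your reduction of the theorem to ``simply connected $+$ free homology in the right degrees $+$ sphericity of the generators'' is the same reduction the paper makes (it is exactly what Criterion 2.2 of~\cite{Nm} packages), and your handling of the coranks $0$, $2$ and $\geq 3$ is correct: in those cases freeness of the homology together with the dimension bound and the minimal CW model already forces the bouquet, with no geometric input beyond Theorem~\ref{homologia} and Proposition~\ref{fungroup}. Your $\mathrm{Sq}^2$ formulation of the obstruction in the corank $1$ case is also a legitimate substitute for Nemethi's criterion for $n\geq 6$ (note, though, that your input (i) on Wu classes is never used: the evaluation $\langle\mathrm{Sq}^2x,[S]\rangle=0$ follows from naturality alone, and the Wu formula would in any case not apply to a manifold with boundary).

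The genuine gap is in your input (ii), which you yourself call the heart of the argument but do not carry out, and the sketch you give of it would not work. The generators of $H_{n-1}(\calM;\ZZ)$ in the corank $1$ case are \emph{not} naturally carried by spheres: those coming from $H_{n-1}(\calX;\ZZ)$ are Gysin lifts of the vanishing $2$-cycles of $B_0$, i.e.\ chains modelled on $\SSS^2\times\SSS^{n-3}$ (total spaces of the $(n-3)$-sphere fibration over a vanishing cycle), and those coming from the pieces $\calC_i$ are modelled on $(L_i\cup L')\times\SSS^{n-4}\cong\SSS^3\times\SSS^{n-4}$. A product of spheres does not represent a spherical homology class in general (the fundamental class of $\SSS^2\times\SSS^{n-3}$ is not in the image of Hurewicz for that space itself), and the Whitney trick is irrelevant here: it lets you embed a class you already know to be spherical, it does not produce sphericity. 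What is actually needed, and what the paper's proof supplies, is (a) the claim that the $(n-3)$-sphere fibration over $B_0$ is trivial over each vanishing cycle (proved by degenerating to an $A_1$ point and a local computation for $(x_1^2+x_2^2+x_3^2)x_4^2+\sum x_i^2$), (b) the observation that the section $s(\SSS^2)$ (resp.\ $s(\partial L_i)$) is nullhomotopic in $\mathbf{F}_f$ — via $H_2(\calX;\ZZ)$ being generated by the fibre plus Hurewicz when $n=5$, and via Kato--Matsumoto connectivity when $n>5$ — and (c) Lemma~4.5 of~\cite{Nm}, which collapses the nullhomotopic factor and converts each product chain into a genuine sphere in the same homology class. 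Without steps (a)--(c) your argument establishes only that the obstruction to the bouquet decomposition lies in $\mathrm{Sq}^2$ (or, for $n=5$, in a cup square), not that it vanishes.
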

\begin{proof}
For the previous Proposition, we know that the Milnor fibre is simply connected.

In the case where $corankH(f)(O)\geq 2$ (that is, $a\neq 0$) we have computed the integer homology, getting that $H_{n-1}(\mathbf{F}_f;\ZZ))$ and $H_{n-2}(\mathbf{F}_f;\ZZ)$ are free and finitely generated and 
$H_i(\mathbf{F}_f;\ZZ)\cong 0$ otherwise. In this situation, since the Milnor Fibre has the homotopy type of a $(n-1)$-complex, we can apply Criterion 2.2 and 
Remark 2.3 of~\cite{Nm} and we get the result.

If $corank(H(f)(O)=0$ the result is covered by Theorem~4.1~of~\cite{Nm}. 

We are left with the case in which $corank(H(f)(O)=1$.
By Criterion 2.2 in \cite{Nm}, we only need to represent each generator of the non-zero homology groups by a chain 
modelled in a sphere. When $corank(H(f)(O)=1$, in the decomposition of $\calM$ given in Section~\ref{sectiondecomp} we have that $\calB$
coincides with $\calX$, that $B_u$ is diffeomorphic to $B_0$, which are Milnor fibres of the $2$-dimensional
i.c.i.s. $\Sigma_0\cap V(det(H(f)))$ and that the fibration~(\ref{fibracsn-3}) becomes a homotopy $\SSS^{n-3}$-fibration
\begin{equation}
\label{fibsimple}
\varphi:\calX\to B_0\cong \calB_u.
\end{equation}

The generator of $H_{n-3}(\mathbf{F}_f;\ZZ)$ is the Gysin lift of the generator of $H_0(B;\ZZ)$, and hence it is represented by a sphere. By Homology Splitting, the generators of $H_{n-1}(\mathbf{F}_f;\ZZ)$ come from
two different places: the ones comming from the $A_1$-singularities of $f_s$ outside $\Sigma_s$ and those coming from 
$H_{n-1}(\calM;\ZZ)$. The first generators are clearly represented by spheres (the vanshing cycles of the 
$A_1$-singularities). The generators of $H_{n-1}(\calM;\ZZ)$ come in turn from two different places:
the ones comming from the image of $H_{n-1}(\calX;\ZZ)$ in $H_{n-1}(\calM;\ZZ)$, and 
those coming from the addition to $\calX$ of the spaces $\calC_i$ (see the decomposition formula~(\ref{descom})). Recall
that each $\calC_i$ is the product of a Lefschetz thimble associated to a vanishing cycle of $B_u=\{det(H(f_s)=u\}\cap\Sigma_s$ with the homotopy-sphere $\SSS^{n-4}$, which is the fibre of the
fibration~(\ref{fibracionsn-4}). The ones coming from $H_{n-1}(\calX;\ZZ)$ are Gysin-liftings over the vanishing cycles
of $B_u$ of the fibration~(\ref{fibsimple}).

We claim that the fibration of $(n-3)$-spheres over $B_0$ is trivial. Since $B_0$ is a bouquet of $2$-spheres given by
vanishing cycles it is enough to prove that the fibration, restricted to each of the vanishing cycles of $B_0$ 
is trivial. Choose a vanising cycle $C_i$. Move the parameter $s$ so that that $s$ is very close to a parameter
$s_0$ in which $\Sigma_s\cap V(det((f_s)))$ adquires an $A_1$ singularity to which the vanishing cycle $C_i$ collapses.
In this situation a local change of coordinates shows that to prove that the fibration is trivial over $C_i$ is equivalent to prove that
the fibration of $(n-3)$-spheres associated to the function
\[f=(x_1^2+x_2^2+x_3^2)x_4^2+\sum_{i=5}^nx_i^2\]
is trivial over the vanishing cycle of the restriction of $x_1^2+x_2^2+x_3^2$ to $V(x_4,...,x_n)$. Proving this is an 
easy local computation.

Now we represent each of the two kinds of generators of $H_{n-1}(\calM;\ZZ)$ by spheres.
Let us start by the first kind. By the claim the group $H_{n-1}(\calX;\ZZ)$ is generated by chains of the form 
\[\tau:\SSS^2\times \SSS^{n-3}\to\calX\subset\calM\subset\mathbf{F}_f,\]
where $\tau(\SSS^2\times \SSS^{n-3})$ is a Gysin lift of a canishing cycle $C_i$ of $B_0$ by the fibration~(\ref{fibsimple}).

Choose a section $s$ of this fibration such that $s(C_i)$ is inside $\tau(\SSS^2\times\SSS^{n-3})$. 
For $n=5$, the sphere $s(\SSS^2)$ is trivial in $H_2(\calX;\ZZ)$,
since this group is generated by the fibre. This implies that it is also zero in $H_2(\mathbf{F}_f;\ZZ)$, and, by Hurewitz's Theorem, it is also trivial in $\pi_2(\mathbf{F}_f)$. 
For $n>5$ the triviality of $s(\SSS^2)$ in $\pi_2(F_t)$ holds by
the connectivity of the Milnor fibre. This means that $s(\SSS^2\times\{point\})$ can be killed by a $3$-disc inside $\mathbf{F}_f$. By Lemma 4.5~in~\cite{Nm}, we have that the homology class $[\tau(\SSS^2\times\SSS^{n-3})]$ can be 
represented by a sphere.

We study now the homology classes in $H_{n-1}(F_1;\ZZ)$ coming from a the addition of an space $\calC_i$. 
The space $\calC_i$ is the product of a Lefschetz thimble $L_i$ associated to a vanishing cycle $C_i$ of $B_u$ with the
sphere $\SSS^{n-4}$, which is the homotopy-fibre of the fibration~(\ref{fibracionsn-4}). 
Recall that over $B_u$ we have in fact a fibration of pairs with fibre homotopic to $(\SSS^{n-3},\SSS^{n-4})$ being 
$\SSS^{n-4}$ embedded as the equator of $\SSS^{n-3}$. Consider a collar $K\cong\partial L_i\times [0,1]$ of 
$\partial L_i$ in the $3$-cell $L_i$. We deform continuously 
the chain given by the embedding of $L\times\SSS^{n-4}$ in $\calM$ so that fibrewise $\SSS^{n-4}$ is the equator of $\SSS^{n-3}$ over any point of the internal boundary of
the collar and so that $\SSS^{n-4}$
is collapsed to the north pole of $\SSS^{n-3}$ at the external boundary $\partial L_i$ of the collar.
The obtained chain is called
\[\varphi:L_i\times\SSS^{n-4}\to\mathbf{F}_f.\]
The mapping 
\[s:\partial L_i\to\calX\subset\mathbf{F}_f\]
which assigns to a point of $\partial L_i$ the north pole of the fibre $\SSS^{n-3}$ has been seen before to be a trivial element in
$\pi_2(\mathbf{F}_f)$. Therefore there exists a $3$-disk
$L'$ bounding $\partial L_i$ and an extension
\[s':L'\to F_t\]
of $s$.
The identification $L\cup_{\partial L_i}L'$ along their common boundary is a $3$-sphere. A representative of our homology class is given by the chain
\[\psi:(L_i\cup_{\partial L_i}L')\times\SSS^{n-4}\to F_t\]
defined by $\psi|_{L\times\SSS^{n-4}}:=\varphi$ and $\psi|_{L'\times\SSS^{n-4}}:=s'\comp pr_1$, where $pr_1$ is the projection of $L'\times\SSS^{n-4}$ to the first factor.
Notice that the source of $\psi$ is a product of spheres, which we view as a trivial fibration of $\SSS^{n-4}$ over $L \cup_{\partial L_i}L'\cong\SSS^3$,
and that $\psi$ fatorises through the result of collapsing to a point the fibre over any point of $L'$. Again Lemma~4.5~in~\cite{Nm} represents the homology class by a sphere.
\end{proof}

\section{Examples}
\label{sectionexamples}
Despite the apparent simplicity of the homotopy type of the Milnor fibre of the class singularities considered in 
this paper, it is possible to find among them unexpected topological behaviours which at the moment have not been 
observed in singularities with smaller critical set. As an illustration of this we summarise here the properties of 
a family of examples, which fall in the general class studied in this paper, and which was used in~\cite{FdB3} to produce
counterexamples to several old equisingularity questions.

\begin{example}
\label{contraejemplos}
Let $\varphi$ a possibly identical to $0$ convergent power series in a variable $x_1$. Define
\[f_\varphi:(\CC^5,O)\to\CC\]
by
\[f_\varphi(x_1,x_2,x_3,y_1,y_2):=f=(y_1,y_2)\cdot\left(\begin{array}{cc} x_3 & x_2 \\ x_2 & \varphi(x_1)-x_3\end{array}\right) \cdot \left(\begin{array}{c} y_1 \\ y_2\end{array}\right).\]
\end{example}

If $\varphi$ is not identical to $0$ the function $f_\varphi$ is of finite codimension with respect to the ideal 
$I=(y_1,y_2)$. The critical set $\Sigma=V(y_1,y_2)$ is $3$-dimensional and smooth. It is easily checked that the 
$I$-unfolding
\begin{equation}
\label{exampleunfolded}
F_\varphi:=f_\varphi+\sum_{i=0}^{ord(\varphi)-2}t_ix_1^iy_2^2,
\end{equation}
where $ord(\varphi)$ denotes the order of the series $\varphi$ in $x_1$,
is the versal $I$-unfolding of $f_\varphi$ in the sense of~\cite{Pe}~and~\cite{FdB2}. Hence we can obtain all
$I$-unfoldings of $f_\varphi$ by considering deformations of the form
\[\varphi+\sum_{i=0}^{ord(\varphi)-2}t_ix_1^i.\]

Notice that the determinant
\[detH(f_\varphi)=x_3(\varphi(x_1)-x_3)-x_2^2:(\Sigma,O)\to\CC\] 
has a singularity at the origin of type $A_{2ord(\varphi)-1}$. An easy computation shows that if $(f_\varphi)_s$ is a 
generic deformation of $f_\varphi$ in its versal $I$-unfolding, the cardinality of the set $\Sigma[2]_s$ of points where
$H(f_2)$ has corank precisely $2$ is equal to $ord(\varphi)$. 

It is also easy to check that for any $s$ in the base of the versal $I$ unfolding the critical set of $f_s$ is 
equal to $\Sigma=V(y_1,y_2)$. Hence there are no $A_1$ points popping out of $\Sigma$ in a generic $I$-unfolding of
$f_\varphi$. 

Noticing that $corank(H(f_\varphi))(O)=2$ we may apply Theorem~\ref{homologia} to show that the Milnor fibre is $2$-connected,
with third Betti number equal to $1$ and fourth Betti number equal to:
\[b_4=\mu_0+2\mu_1-4a+2+\#A_1=0+2(2ord(\varphi)-1)-4ord(\varphi)+2+0=0,\] 
which, surprisingly, is independent of $\varphi$. By Theorem~\ref{homotopiabouquet} we conclude that the Milnor
fibre of $f_\varphi$ is homotopy equivalent to a $3$-sphere. The remarkable fact is that 
the homotopy type of the Milnor fibre is independent on $\varphi$ and at the same time the topology of the pair of germs
\begin{equation}
\label{pardegermenes}
((\Sigma,O),(\Sigma[1]_s,O))
\end{equation}
depends heavily on the value $s$ in the base of the versal unfolding.

In~\cite{FdB3} it is shown that in fact the diffeomerphism type of the Milnor fibration of the germ $f_{\varphi}$ and the generic 
L\^e-numbers are independent of $\varphi$. Using that the topology of the pair~(\ref{pardegermenes}) depends on $s$ it is also proven that
the topology of the abstract link
of $f_\varphi$ does depend on $\varphi$. This kind of examples and their stabilisations are at the moment the only known
families of examples with constant L\^e numbers and constant Milnor fibration and changing topological type. They 
answer negatively a question 
of D. Massey in~\cite{Ma}. In~\cite{FdB3} modifications of these examples are also used to give the first known 
counterexample of Zariski's Question~B~of~\cite{Zar}. Also in~\cite{FdB3} these examples were used
to construct a family of reduced projective hypersurfaces with
constant homotopy type and changing topological type (therefore most classical algebro-topological invariants can not detect
the change in topology).

\end{document}